\providecommand{\tabularnewline}{\\}
\numberwithin{equation}{section}
\numberwithin{figure}{section}
\theoremstyle{plain}
\newtheorem{thm}{\protect\theoremname}[section]
\theoremstyle{definition}
\newtheorem{defn}[thm]{\protect\definitionname}
\theoremstyle{plain}
\newtheorem{fact}[thm]{\protect\factname}
\theoremstyle{plain}
\newtheorem{prop}[thm]{\protect\propositionname}
\theoremstyle{plain}
\newtheorem{lem}[thm]{\protect\lemmaname}
\theoremstyle{remark}
\newtheorem{rem}[thm]{\protect\remarkname}
\newlist{casenv}{enumerate}{4}
\setlist[casenv]{leftmargin=*,align=left,widest={iiii}}
\setlist[casenv,1]{label={{\itshape\ \casename} \arabic*.},ref=\arabic*}
\setlist[casenv,2]{label={{\itshape\ \casename} \roman*.},ref=\roman*}
\setlist[casenv,3]{label={{\itshape\ \casename\ \alph*.}},ref=\alph*}
\setlist[casenv,4]{label={{\itshape\ \casename} \arabic*.},ref=\arabic*}
\theoremstyle{plain}
\newtheorem{cor}[thm]{\protect\corollaryname}
\theoremstyle{lemma}
\newtheorem{mainlemma}[thm]{\protect\mainlemmaname}
\theoremstyle{remark}
\newtheorem{claim}[thm]{\protect\claimname}
\theoremstyle{plain}
\newtheorem{question}[thm]{\protect\questionname}
\newcommand{\trianglerightneq}{\mathrel{\ooalign{\raisebox{-0.5ex}{\reflectbox{\rotatebox{90}{$\nshortmid$}}}\cr$\triangleright$\cr}\mkern-3mu}}
\newcommand{\triangleleftneq}{\mathrel{\reflectbox{$\trianglerightneq$}}}
\providecommand{\casename}{Case}
\providecommand{\claimname}{Claim}
\providecommand{\corollaryname}{Corollary}
\providecommand{\definitionname}{Definition}
\providecommand{\factname}{Fact}
\providecommand{\lemmaname}{Lemma}
\providecommand{\mainlemmaname}{Main Lemma}
\providecommand{\propositionname}{Proposition}
\providecommand{\questionname}{Question}
\providecommand{\remarkname}{Remark}
\providecommand{\theoremname}{Theorem}
\begin{document}
\global\long\def\code#1{\ulcorner#1\urcorner}%
\global\long\def\p{\mathbf{p}}%
\global\long\def\q{\mathbf{q}}%
\global\long\def\C{\mathfrak{C}}%
\global\long\def\SS{\mathcal{P}}%
 
\global\long\def\pr{\operatorname{pr}}%
\global\long\def\image{\operatorname{im}}%
\global\long\def\otp{\operatorname{otp}}%
\global\long\def\dec{\operatorname{dec}}%
\global\long\def\suc{\operatorname{suc}}%
\global\long\def\pre{\operatorname{pre}}%
\global\long\def\qe{\operatorname{qf}}%
 
\global\long\def\ind{\operatorname{ind}}%
\global\long\def\Nind{\operatorname{Nind}}%
\global\long\def\lev{\operatorname{lev}}%
\global\long\def\Suc{\operatorname{Suc}}%
\global\long\def\HNind{\operatorname{HNind}}%
\global\long\def\minb{{\lim}}%
\global\long\def\concat{\frown}%
\global\long\def\cl{\operatorname{cl}}%
\global\long\def\tp{\operatorname{tp}}%
\global\long\def\id{\operatorname{id}}%
\global\long\def\cons{\left(\star\right)}%
\global\long\def\qf{\operatorname{qf}}%
\global\long\def\ai{\operatorname{ai}}%
\global\long\def\dtp{\operatorname{dtp}}%
\global\long\def\acl{\operatorname{acl}}%
\global\long\def\nb{\operatorname{nb}}%
\global\long\def\limb{{\lim}}%
\global\long\def\leftexp#1#2{{\vphantom{#2}}^{#1}{#2}}%
\global\long\def\intr{\operatorname{interval}}%
\global\long\def\atom{\emph{at}}%
\global\long\def\I{\mathfrak{I}}%
\global\long\def\uf{\operatorname{uf}}%
\global\long\def\ded{\operatorname{ded}}%
\global\long\def\Ded{\operatorname{Ded}}%
\global\long\def\Df{\operatorname{Df}}%
\global\long\def\Th{\operatorname{Th}}%
\global\long\def\eq{\operatorname{eq}}%
\global\long\def\Aut{\operatorname{Aut}}%
\global\long\def\ac{ac}%
\global\long\def\DfOne{\operatorname{df}_{\operatorname{iso}}}%
\global\long\def\modp#1{\pmod#1}%
\global\long\def\sequence#1#2{\left\langle #1\,\middle|\,#2\right\rangle }%
\global\long\def\set#1#2{\left\{  #1\,\middle|\,#2\right\}  }%
\global\long\def\Diag{\operatorname{Diag}}%
\global\long\def\Nn{\mathbb{N}}%
\global\long\def\mathrela#1{\mathrel{#1}}%
\global\long\def\twiddle{\mathord{\sim}}%
\global\long\def\Sk{\operatorname{Sk}}%
\global\long\def\mathordi#1{\mathord{#1}}%
\global\long\def\Qq{\mathbb{Q}}%
\global\long\def\dense{\operatorname{dense}}%
\global\long\def\Rr{\mathbb{R}}%
 
\global\long\def\cof{\operatorname{cf}}%
\global\long\def\tr{\operatorname{tr}}%
\global\long\def\treeexp#1#2{#1^{\left\langle #2\right\rangle _{\tr}}}%
\global\long\def\x{\times}%
\global\long\def\forces{\Vdash}%
\global\long\def\Vv{\mathbb{V}}%
\global\long\def\Uu{\mathbb{U}}%
\global\long\def\tauname{\dot{\tau}}%
\global\long\def\ScottPsi{\Psi}%
\global\long\def\cont{2^{\aleph_{0}}}%
\global\long\def\MA#1{{MA}_{#1}}%
\global\long\def\rank#1#2{R_{#1}\left(#2\right)}%
\global\long\def\cal#1{\mathcal{#1}}%
\global\long\def\triangleleftnneq{\triangleleftneq}%

\def\Ind#1#2{#1\setbox0=\hbox{$#1x$}\kern\wd0\hbox to 0pt{\hss$#1\mid$\hss} \lower.9\ht0\hbox to 0pt{\hss$#1\smile$\hss}\kern\wd0} 
\def\Notind#1#2{#1\setbox0=\hbox{$#1x$}\kern\wd0\hbox to 0pt{\mathchardef \nn="3236\hss$#1\nn$\kern1.4\wd0\hss}\hbox to 0pt{\hss$#1\mid$\hss}\lower.9\ht0 \hbox to 0pt{\hss$#1\smile$\hss}\kern\wd0} 
\def\nind{\mathop{\mathpalette\Notind{}}} 

\global\long\def\ind{\mathop{\mathpalette\Ind{}}}%
\global\long\def\Age{\operatorname{Age}}%
\global\long\def\lex{\operatorname{lex}}%
\global\long\def\len{\operatorname{len}}%

\global\long\def\dom{\operatorname{Dom}}%
\global\long\def\res{\operatorname{res}}%
\global\long\def\alg{\operatorname{alg}}%
\global\long\def\dcl{\operatorname{dcl}}%
 
\global\long\def\nind{\mathop{\mathpalette\Notind{}}}%
\global\long\def\average#1#2#3{Av_{#3}\left(#1/#2\right)}%
\global\long\def\Ff{\mathfrak{F}}%
\global\long\def\mx#1{Mx_{#1}}%
\global\long\def\maps{\mathfrak{L}}%

\global\long\def\Esat{E_{\mbox{sat}}}%
\global\long\def\Ebnf{E_{\mbox{rep}}}%
\global\long\def\Ecom{E_{\mbox{com}}}%
\global\long\def\BtypesA{S_{\Bb}^{x}\left(A\right)}%
\global\long\def\DenseTrees{T_{dt}}%

\global\long\def\init{\trianglelefteq}%
\global\long\def\fini{\trianglerighteq}%
\global\long\def\Bb{\cal B}%
\global\long\def\Lim{\operatorname{Lim}}%
\global\long\def\Succ{\operatorname{Succ}}%

\global\long\def\SquareClass{\cal M}%
\global\long\def\leqstar{\leq_{*}}%
\global\long\def\average#1#2#3{Av_{#3}\left(#1/#2\right)}%
\global\long\def\cut#1{\mathfrak{#1}}%
\global\long\def\NTPT{\text{NTP}_{2}}%
\global\long\def\Zz{\mathbb{Z}}%
\global\long\def\TPT{\text{TP}_{2}}%
\global\long\def\supp{\operatorname{supp}}%

\global\long\def\OurSequence{\mathcal{I}}%
\global\long\def\SUR{SU}%
\global\long\def\ShiftGraph#1#2{Sh_{#2}\left(#1\right)}%
\global\long\def\ShiftGraphHalf#1{\cal G_{#1}^{\frac{1}{2}}}%
\global\long\def\SymShiftGraph#1#2{Sh_{#2}^{sym}\left(#1\right)}%
\global\long\def\aa{\textsf{aa}}%
\global\long\def\Mm{\mathbb{M}}%
\global\long\def\stat{\textsf{stat}}%
\global\long\def\PC#1#2{PC\left(#1,#2\right)}%
\global\long\def\loc{\operatorname{loc}}%

\title{Exact saturation in pseudo-elementary classes for simple and stable
theories}
\author{Itay Kaplan, Nicholas Ramsey, and Saharon Shelah}
\thanks{The first-named author would like to thank the Israel Science Foundation
for partial support of this research (Grants no. 1533/14 and 1254/18). }
\thanks{The third-named author would like to thank the Israel Science Foundation
grant no: 1838/19 and the European Research Council grant 338821.
Paper no. 1197 in the third author\textquoteright s publication list.}
\subjclass[2020]{03C45, 03C55, 03C95}
\begin{abstract}
We use exact saturation to study the complexity of unstable theories, showing that a variant of this notion called PC-exact saturation meaningfully reflects combinatorial dividing lines.  We study PC-exact saturation for stable and simple theories. Among
other results, we show that PC-exact saturation characterizes the
stability cardinals of size at least continuum of a countable stable
theory and, additionally, that simple unstable theories have PC-exact
saturation at singular cardinals satisfying mild set-theoretic hypotheses.  This had previously been open even for the random graph. We characterize
supersimplicity of countable theories in terms of having PC-exact
saturation at singular cardinals of countable cofinality. We also
consider the local analogue of PC-exact saturation, showing that local
PC-exact saturation for singular cardinals of countable cofinality
characterizes supershort theories. 
\end{abstract}

\maketitle

\section{Introduction}

One of the major goals of model theory is to develop techniques to quantify and compare the complexity of mathematical theories.  This began with Morley's work on categoricity and the third-named author's work characterizing when models of a theory may be determined by an assignment of cardinal invariants.  This work spawned a rich structure theory for stable theories, which enabled many applications.  More recent work has pushed the boundaries of our understanding beyond stable theories, which has in turn required the development of new ways of measuring `complexity.'  In this paper, we introduce and develop the study of \emph{PC-exact saturation}, showing that this notion meaningfully measures how combinatorially complicated a theory can be.  In addition to giving an essentially complete description of how PC-exact saturation behaves in stable theories, we prove the existence of PC-exact saturated models of simple theories.  The constructions done here for simple theories are interesting in their own right\textemdash establishing techniques for model-construction which had been previously unknown even for the random graph\textemdash but, additionally, provide tantalizing new tools for possibly understanding further unstable classes beyond simplicity (e.g., NSOP$_{1}$ and NTP$_{1}$).  

Our work here builds upon two distinct lines of model-theoretic research.  A recurrent theme in model theory is the connection between the combinatorics
of definable sets--measured by dividing lines like stability, simplicity,
and NSOP$_{2}$--and the ability to construct saturated models of
a given first-order theory. Recently,
the notion of \emph{exact saturation }has proved to be especially
meaningful in relation to combinatorial dividing lines. A model is
said be be \emph{exactly} $\kappa$\emph{-saturated} if it is $\kappa$-saturated
but not $\kappa^{+}$-saturated. Saturation is a kind of completeness condition and exact saturation names an \emph{incompactness} phenomenon, in which completeness does not spill over.  The study of this notion was begun
in \cite{20-Kaplan2015} which showed, for singular cardinals, that, modulo some natural set-theoretic
hypotheses, among NIP theories, the existence of exactly saturated
models is characterized by the non-distality of the theory and, additionally, that exactly saturated models of simple theories exist. Later,
in \cite{kaplan2020criteria}, exact saturation motivated the discovery
of a new dividing line, namely the \emph{unshreddable theories}, containing
the simple and NIP theories and giving a general setting in which
exactly saturated models may be constructed. A second body of work in model theory concerns methods of directly comparing first order theories by comparing how difficult it is to saturate them.  The two main approaches to this are Keisler's order and the interpretability order; the former compares saturation of ultrapowers by the same ultrafilter, the latter describes when saturation of one theory may be transferred to another by means of a first-order theory that interprets both.  These `orders' are, in fact, pre-orders which divide theories into classes.  It was observed in \cite[Corollary 9.29]{MR3666452} that the existence of PC-exact saturated models, defined in the following paragraph, of a given theory depends only on its class in the interpretability order, and thus the tools developed here allow us to study in an \emph{absolute} way part of what these orders are studying in a \emph{relative} way.   

PC-exact saturation allows us to give many new characterizations
of dividing lines. We say that that an $L$-theory $T$ has \emph{PC-exact
saturation for $\kappa$} (where ``PC'' stands for \emph{pseudo-elementary
class}) if, for any $T_{1}\supseteq T$ with $\left|T_{1}\right|=\left|T\right|$,
there is a model $M\models T_{1}$ such that the reduct $M\upharpoonright L$
is $\kappa$-saturated but not $\kappa^{+}$-saturated. The PC version
turns out to be much more tightly connected to the complexity of the
theory $T$. For example, consider a two sorted $L$-structure $M=\left(X^{M},Y^{M}\right)$
where on one sort $X$, there is the structure of a countable model
of Peano arithmetic, and on the second sort $Y$ there is a countably
infinite set with no structure, with no relations between the two
sorts. From a model-theoretic point of view, $M$ is as complicated
as possible, because it interprets PA, but $\mathrm{Th}\left(M\right)$
has an exactly saturated model: interpreting $X$ as any $\kappa$-saturated
model of PA and $Y$ as a set with exactly $\kappa$ many elements
yields a $\kappa$-saturated model of $\mathrm{Th}\left(M\right)$ which is not $\kappa^{+}$-saturated.
Note, however, that $\mathrm{Th}\left(M\right)$ does \emph{not} have
PC-exact saturation for singular cardinals $\kappa$. In an expansion
of $M$ to $M'$, in a language containing a function symbol for a
bijection $f:X\to Y$, any model of $N\models\mathrm{Th}\left(M'\right)$
whose reduct to $L$ is $\kappa$-saturated will satisfy $\left|X^{N}\right|\geq\kappa^{+}$,
as PA has no exactly $\kappa$-saturated model (see \cite[Lemma 5.3 and the comment after]{kaplan2020criteria}),
hence $\left|Y^{N}\right|\geq\kappa^{+}$ and therefore $N\upharpoonright L$
is $\kappa^{+}$-saturated. It appears that, by looking at PC-exact
saturation, one obtains a condition that more
faithfully tracks the complicated combinatorics of the theory. 

One of the motivations of the work here was a question \cite[Question 9.31]{MR3666452}
of whether the random graph has PC-exact saturation. For an infinite
cardinal $\kappa$, one may easily construct an exactly $\kappa$-saturated
model of the random graph: in a $\kappa^{+}$-saturated random graph
$\mathcal{G}$, one can choose an empty (induced) subgraph $X\subseteq\mathcal{G}$
with $\left|X\right|=\kappa$ and set $\mathcal{G}'=\set{v\in\mathcal{G}}{\left|N_{\mathcal{G}}\left(v\right)\cap X\right|<\kappa}$,
where $N_{\mathcal{G}}\left(v\right)$ denotes the neighbors of the
vertex $v$. It is easily checked that, because $\mathcal{G}$ was
chosen to be $\kappa^{+}$-saturated, $\mathcal{G}'$ is a $\kappa$-saturated
random graph which omits the type $\set{R\left(x,v\right)}{v\in X}$
and hence is not $\kappa^{+}$-saturated. Given an expansion of $\mathcal{G}$
to a larger language $L$, it is less clear that one can arrange for
such a $\mathcal{G}'$ to be a reduct of a model of $\mathrm{Th}_{L}\left(\mathcal{G}\right)$.
We prove the existence of PC-exact saturated models of the random
graph by proving a much more general result, showing that, modulo
natural set-theoretic hypotheses, one may construct PC-exact saturated
models of simple theories and, along the way, we obtain many precise
equivalences between subclasses of the stable and simple theories. 

In Section \ref{sec:PC-Exact-saturation-for stable}, we begin our
study of PC-exact saturation by focusing on the stable theories. Here
we show that, for a countable stable theory $T$ and cardinal $\mu\geq2^{\aleph_{0}}$,
having PC-exact saturation at $\mu$ is equivalent to $T$ being stable
in $\mu$; this is Corollary \ref{cor:PC ES iff stable}. One direction
of this theorem is an easy consequence of the well-known fact that
stable theories have saturated models in the cardinals in which they
are stable and such models may be expanded to a model of any larger
theory (of the same size). The other direction involves the construction
of suitable expansions that allow one to find realizations of types
over sets of size $\mu$ from realizations of types of smaller
size. Although it does not appear in the statement, our proof relies
on a division into cases based on whether or not the given stable
theory has the finite cover property.

In Section \ref{sec:PC-exact-saturation-for simple }, we consider
simple unstable theories. In general, when $T$ is unstable then it
has PC-exact saturation at any large enough regular cardinal (Proposition
\ref{prop:unstable regular}), so we concentrate on singular cardinals
here. We show that simple unstable theories have PC-exact saturation
for singular cardinals $\kappa$, satisfying certain natural hypotheses.
In particular, this implies that the random graph has singular PC-exact
saturation, answering \cite[Question 9.31]{MR3666452}. In fact, we
prove that for a theory $T_{1}\supseteq T$ such that $\left|T_{1}\right|=\left|T\right|$,
there exists $M\models T_{1}$ such that $M\upharpoonright L$ is
$\kappa$-saturated but not even \emph{locally} $\kappa^{+}$-saturated,
that is, there is a partial type consisting of $\kappa$ many instances
of a single formula (or its negation) which is not realized; this
is Theorem \ref{thm:Main Thm simple theories}. In the supersimple
case we get a converse for singular cardinals of countable cofinality:
an unstable $L$-theory $T$ is supersimple iff $T$ has PC-exact
saturation at singular cardinals with countable cofinality (satisfying
mild hypotheses); this is Corollary \ref{cor:supersimple unstable char}.
Combined with the results described in the previous paragraph, we
get that a countable theory $T$ is supersimple iff for every $\kappa>2^{\aleph_{0}}$
of countable cofinality satisfying mild set-theoretic assumptions,
$T$ has PC-exact saturation at $\kappa$; this is Corollary \ref{cor:countable supersimpmle char}.

Finally, in Section \ref{sec:On-local-exact saturation and cofinality omega},
we elaborate on the case of \emph{local} PC-exact saturation at singular
cardinals. Here were are interested in determining, for a given $L$-theory
$T$ and singular cardinal $\kappa$ of countable cofinality, if,
for all $T_{1}\supseteq T$ with $\left|T_{1}\right|=\left|T\right|$,
there is $M\models T_{1}$ such that $M\upharpoonright L$ is locally
$\kappa$-saturated but not locally $\kappa^{+}$-saturated. We essentially
characterize when this takes place in terms of the notion of a supershort\emph{
}theory, a class of theories introduced by Casanovas and Wagner to
give a local analogue of supersimplicity \cite{MR1905165}. A theory
is called \emph{supershort} if every local type does not fork over
some finite set. The supershort theories properly contain the supersimple
theories. The main result of Section \ref{sec:On-local-exact saturation and cofinality omega}
shows that if $\kappa$ is a singular cardinal of countable cofinality,
satisfying natural hypotheses, then $T$ has local PC-exact saturation
for $\kappa$ if and only $T$ is supershort, giving the first ``outside''
characterization of this class of theories. 

We conclude in Section \ref{sec:Final-thoughts} with some questions
concerning possible extensions of the results of this paper.

\section{\label{sec:Preliminaries}Preliminaries}

\subsection{PC classes and exact saturation}

Here we give the basic definitions and facts about (PC-)exact saturation. 
\begin{defn}
\label{def:PC}Suppose that $T$ is a complete first order $L$-theory,
and let $L_{1}\supseteq L$. Suppose that $T_{1}$ is an $L_{1}$-theory.
Let $\PC{T_{1}}T$ be the class of models $M$ of $T$ which have
expansions $M_{1}$ to models of $T_{1}$. PC stands for ``pseudo
elementary class''. 
\end{defn}

\begin{defn}
\label{def:Exact saturation}Suppose $T$ is a first order theory
and $\kappa$ is a cardinal. 
\begin{enumerate}
\item Say that $T$ \emph{has exact saturation at $\kappa$} if $T$ has
a $\kappa$-saturated model $M$ which is not $\kappa^{+}$-saturated.
\item Say that a pseudo-elementary class $P=\PC{T_{1}}T$ has \emph{exact
saturation at a cardinal $\kappa$} if there is a $\kappa$-saturated
model $M\models T$ in $P$ which is not $\kappa^{+}$-saturated. 
\item Say that $T$ has \emph{PC-exact saturation} at a cardinal $\kappa$
if for every $T_{1}\supseteq T$ of cardinality $\left|T\right|$,
$\PC{T_{1}}T$ has exact saturation at $\kappa$. 
\end{enumerate}
\end{defn}

In general we expect that having exact saturation at $\kappa$ should
not depend on $\kappa$ (up to some set theoretic assumptions on $\kappa$).
For example, in \cite{20-Kaplan2015} the following facts were established:
\begin{fact}
\label{fact:exact saturation} Suppose $T$ is a first order theory.
\begin{enumerate}
\item \cite[Theorem 2.4]{20-Kaplan2015} If $T$ is stable then for all
$\kappa>\left|T\right|$, $T$ has exact saturation at $\kappa$.
\item \cite[Fact 2.5]{20-Kaplan2015} If $T$ is unstable then $T$ has
exact saturation at all regular $\kappa>\left|T\right|$.
\item \cite[Theorem 3.3]{20-Kaplan2015} Suppose that $T$ is simple, $\kappa$
is singular with $\left|T\right|<\mu=\cof\left(\kappa\right)$, $\kappa^{+}=2^{\kappa}$
and $\square_{\kappa}$ holds (see Definition \ref{def:square} below).
Then $T$ has exact saturation at $\kappa$. 
\item \cite[Theorem 4.10]{20-Kaplan2015} Suppose that $\kappa$ is a singular
cardinal such that $\kappa^{+}=2^{\kappa}$. An NIP theory $T$ with
$\left|T\right|<\kappa$ is distal (see e.g., \cite{MR3001548}) iff
it does not have exact saturation at $\kappa$. 
\end{enumerate}
\end{fact}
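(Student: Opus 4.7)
The four items have rather different flavors, so I would treat them separately. For item (1), when $T$ is stable and $\kappa$ is a stability cardinal or satisfies $\kappa^{|T|}=\kappa$, there exists a saturated model $M$ of size exactly $\kappa$, and this is automatically exactly $\kappa$-saturated because $|S_1(M)|>|M|$ is impossible without $M$ being $\kappa^+$-saturated yet omitting types over itself --- concretely, take any $p\in S_1(M)$ not realized in $M$. For general $\kappa>|T|$, I would build $M$ along a long Morley sequence $(a_i:i<\kappa)$ over a small base, exploiting definability of types in stable theories to ensure the average type over a cofinal piece is omitted while all small types get realized. For item (2), with $T$ unstable and $\kappa$ regular, use $\varphi(x,y)$ and a sequence $(a_i:i<\kappa)$ witnessing the order property; then build a $\kappa$-saturated $M\supseteq\{a_i:i<\kappa\}$ by an elementary chain of length $\kappa$ realizing types over $<\kappa$-sized sets at each stage. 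Regularity of $\kappa$ ensures $\{\varphi(x,a_i):i<\kappa\}$ remains unrealized, as any realization would have to appear at some stage $\alpha<\kappa$ using only $\{a_i:i<\alpha\}$, contradicting the order property.

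For item (3), the singular simple case, the strategy is more delicate. I would construct a continuous increasing elementary chain $(M_\alpha:\alpha<\kappa)$ indexed by a cofinal subset of $\kappa$ of order-type $\mu=\cof(\kappa)$, with $|M_\alpha|<\kappa$, and use $\square_\kappa$ to coordinate the realized types at limit stages. At each successor stage, the independence theorem for simple theories lets one amalgamate new realizations freely over a non-forking base, preserving non-realization of a distinguished ``limit'' type built along the cofinal sequence. The hypothesis $2^\kappa=\kappa^+$ ensures that only $\kappa$ many types over $<\kappa$-sized sets must be chased, so a bookkeeping argument closes up saturation; the square sequence provides the coherence that prevents the omitted type from being realized in some intermediate $M_\alpha$.

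For item (4), the NIP distality characterization, I would prove the two directions separately. For ``non-distal implies exact saturation'', non-distality gives an indiscernible sequence with a ``broken'' cut witnessed by a type $p$ that is realized in a fully saturated extension but not forced by any finite subsequence; then along this sequence one builds a $\kappa$-saturated model realizing auxiliary types but omitting $p$, using NIP to bound the number of consistency requirements via finite combinatorics (VC-dimension, honest definitions). For ``distal implies no exact saturation'', distality provides strong type-definability uniformities (every definable family admits distal cell decomposition) which, together with $\kappa^+=2^\kappa$, forces that any $\kappa$-saturated model realizes all types over sets of size $\leq\kappa$, hence is $\kappa^+$-saturated.

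The principal obstacle, common to items (3) and (4), is reconciling the saturation demand (realize many types) with the exact-saturation demand (omit a distinguished type) when $\kappa$ is singular, since no naive elementary chain of length $\kappa$ of submodels of size $<\kappa$ suffices. The set-theoretic hypotheses $\square_\kappa$ and $2^\kappa=\kappa^+$ play complementary roles: $\square_\kappa$ supplies the coherence at limit stages needed to track the omitted type through $\cof(\kappa)$ many amalgamation steps, while $2^\kappa=\kappa^+$ provides the counting needed to enumerate and realize all small types. I would expect the hardest technical step to be verifying that the type chosen for omission is genuinely preserved across the singular-cofinality construction --- here the combinatorial heart of the cited arguments lies.
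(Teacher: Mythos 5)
This is a cited fact; the paper does not prove it, it simply refers to \cite{20-Kaplan2015} (and the paper's own Proposition \ref{prop:unstable regular} reproduces the key argument behind item~(2) in the PC setting). So what is being assessed is whether your sketch is sound.

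Your sketch of item~(2) has a genuine gap. You take a sequence $(a_i : i<\kappa)$ witnessing the order property for $\varphi$, build a $\kappa$-saturated $M \supseteq \{a_i : i < \kappa\}$ by a chain of length $\kappa$, and claim that $\{\varphi(x,a_i) : i<\kappa\}$ stays unrealized because ``any realization would have to appear at some stage $\alpha<\kappa$ using only $\{a_i : i<\alpha\}$.'' This is not right: the witnesses $a_i$ for $i\geq\alpha$ are already present in $M_0$ (you required $\{a_i\}\subseteq M$), so an element added at stage $\alpha$ may perfectly well be chosen to realize $\varphi(x,a_i)$ for all $i<\kappa$, and nothing in the order property forbids such an element. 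Indeed, by the order property, $\{\varphi(x,a_i) : i<\kappa\}$ is a consistent type, and an unguarded saturation construction will tend to realize it. The actual mechanism (as in Proposition \ref{prop:unstable regular} and \cite[Fact 2.5]{20-Kaplan2015}) is different: build a chain $(M_i : i\le\kappa)$ with $|M_{i+1}| = 2^{|M_i|}$ and $M_{i+1}$ being $|M_i|^+$-saturated, so $|M_\kappa| = \beth_\kappa(|T|)$. Instability gives $|S_L(M_\kappa)| > \beth_\kappa(|T|)$, while the number of types invariant (non-splitting) over any fixed $M_i$ is $\le 2^{2^{|M_i|}}$; since $\kappa$ is regular, there is a type $p$ splitting over every $M_i$, yielding a single formula $\varphi$ and pairs $a_i\equiv_{M_i} b_i$ with $\varphi(x,a_i)\wedge\neg\varphi(x,b_i)\in p$ cofinally. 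That local type is omitted in $M_\kappa$: any realization $c$ sits in some $M_\alpha$, but then for $i>\alpha$ the condition $a_i\equiv_{M_\alpha} b_i$ forces $\varphi(c,a_i)\leftrightarrow\varphi(c,b_i)$, a contradiction. Note the omitted type is not $\{\varphi(x,a_i)\}$ over a fixed order-property sequence; it comes from a splitting argument, and regularity of $\kappa$ is used for the counting and for catching the realization in some $M_\alpha$, not in the way you describe.

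The other items are broadly in the right spirit, though loosely stated. For (1), your observation that a saturated model of size $\kappa$ is automatically exactly $\kappa$-saturated is fine at stability cardinals; the Morley-sequence/dimension-control idea is indeed what one needs at the remaining cardinals. For (3), your chain ``indexed by a cofinal subset of $\kappa$ of order-type $\mu=\cof(\kappa)$ with $|M_\alpha|<\kappa$'' would only produce a model of size $\kappa$ after $\mu$ steps, which cannot by itself realize all types over sets of size $<\kappa$; the actual construction runs a chain of length $\kappa^+$ of $\mu$-indexed sequences (as in the proof of Theorem \ref{thm:Main Thm simple theories}), and $\square_\kappa$ enters precisely to keep the $\mu$-sequences coherent at limits. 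Your high-level description of the roles of $\square_\kappa$, $2^\kappa=\kappa^+$, and the independence theorem is otherwise reasonable. Your sketch of (4) is a fair summary of the two directions.
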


\subsection{Local (PC) exact saturation}
\begin{defn}
\label{def:local types}Work in some complete theory $T$. Given a
set $\Delta$ of formulas and a tuple of variables $x$, let $L_{x,\Delta}$
be the set of formulas of the form $\varphi\left(x\right)$ where
$\varphi$ is any formula in the Boolean algebra generated by $\Delta$
(where by $\varphi\left(x\right)$ we mean a substitution of the variables
in $\varphi$ by some variables from $x$). Similarly, given some
set $A$, $L_{x,\Delta}\left(A\right)$ is the set of formulas of
the form $\varphi\left(x,a\right)$ where $\varphi$ is any formula
from $L_{xy,\Delta}$ and $a$ is some tuple from $A$ (here $y$
is a countable sequence of variables). A\emph{ $\Delta$-type in variables
$x$ over a set $A$} is a maximal consistent collection of formulas
from $L_{x,\Delta}\left(A\right)$. A\emph{ local type} over $A$
is a $\Delta$-type for some finite set of formulas $\Delta$. The
space of all $\Delta$-types (in finitely many variables) over $A$
is denoted by $S_{\Delta}\left(A\right)$. (Sometimes this notion
is only defined when $\Delta$ is a partitioned set of formulas, $\Delta\left(x,y\right)$,
but here we allow all partitions.)

We say that a structure $M$ is \emph{$\kappa$-locally saturated}
if every local type (in finitely many variables) over a set $A\subseteq M$
of size $\left|A\right|<\kappa$ is realized. Say that $M$ is \emph{locally
saturated} if it is $\left|M\right|$-locally saturated. 
\end{defn}

We define a local analog to Definition \ref{def:Exact saturation}. 
\begin{defn}
\label{def:Exact saturation-local}Suppose $T$ is a first order theory
and $\kappa$ is a cardinal. 
\begin{enumerate}
\item Say that $T$ \emph{has local exact saturation at $\kappa$} if $T$
has a $\kappa$-locally saturated model $M$ which is not $\kappa^{+}$-locally
saturated.
\item Say that a PC-class $P=\PC{T_{1}}T$ has \emph{local exact saturation
at a cardinal $\kappa$} if there is a $\kappa$-locally saturated
model $M\models T$ in $P$ which is not $\kappa^{+}$-locally saturated. 
\item Say that $T$ has \emph{local PC-exact saturation} at a cardinal $\kappa$
if for every $T_{1}\supseteq T$ of cardinality $\left|T\right|$,
$\PC{T_{1}}T$ has local exact saturation at $\kappa$. 
\end{enumerate}
\end{defn}

\subsection{PC-exact saturation for unstable theories in regular cardinals}

For unstable theories and regular cardinals, the situation is as in
Fact \ref{fact:exact saturation} (2). Because of the following proposition,
when we discuss unstable theories, we will subsequently concentrate
on singular cardinals. 
\begin{prop}
\label{prop:unstable regular}If $T$ is not stable then $T$ has
PC-exact saturation at any regular $\kappa > |T|$. Moreover for any
$T_{1}\supseteq T$ of size $\left|T\right|$, there is a $\kappa$-saturated
model of $T_{1}$ whose reduct to the language $L$ of $T$ is not
$\kappa^{+}$-locally saturated. 
\end{prop}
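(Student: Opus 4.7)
Since $T\subseteq T_{1}$ and $T$ is unstable, $T_{1}$ is also unstable, and the order property in $T_{1}$ is witnessed by some $L$-formula $\varphi(x,y)$. The plan is to adapt the construction behind Fact~\ref{fact:exact saturation}(2), specializing it so that the unrealized type witnessing failure of $\kappa^{+}$-saturation is a local $L$-type in the single formula $\varphi$. Work in a monster model $\mathfrak{C}_{1}\models T_{1}$, and fix an $L_{1}$-indiscernible sequence $I=(a_{i}:i<\lambda)$ of sufficiently large length $\lambda\gg\kappa$ with $\mathfrak{C}_{1}\models\varphi(a_{i},a_{j})$ iff $i<j$. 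Let $A=\{a_{i}:i<\kappa\}$ and set
\[
p(y)=\{\varphi(a_{i},y):i<\kappa\},
\]
a local $L$-type over $A$ of cardinality $\kappa$, consistent in $\mathfrak{C}_{1}$ (realized for instance by $a_{\kappa}$). It suffices to produce an $L_{1}$-elementary submodel $M\preceq\mathfrak{C}_{1}$ containing $A$ that is $\kappa$-saturated as a model of $T_{1}$ and in which $p$ is omitted; then $M\upharpoonright L$ is $\kappa$-saturated but fails to be $\kappa^{+}$-locally saturated, as witnessed by $p$.

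The model $M$ is built as the union of a continuous $L_{1}$-elementary chain $(M_{\alpha}:\alpha\leq\kappa)$ of submodels of $\mathfrak{C}_{1}$, maintaining the invariants $\{a_{i}:i<\alpha\}\subseteq M_{\alpha}$, $|M_{\alpha}|<\kappa$ for $\alpha<\kappa$, and no element of $M_{\alpha}$ realizes $p$ in $\mathfrak{C}_{1}$. A routine bookkeeping at successor stages, using the regularity of $\kappa$ and $|T_{1}|=|T|<\kappa$, arranges that $M=M_{\kappa}$ is $\kappa$-saturated as a model of $T_{1}$. The non-realization of $p$ transfers to unions, so $p$ is omitted in $M$.

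The main obstacle is the successor step: given $M_{\alpha}$ and a consistent $L_{1}$-type $q(y)$ over some $B\subseteq M_{\alpha}$ with $|B|<\kappa$, one must exhibit a realization $c\in\mathfrak{C}_{1}$ of $q$ with $c\notin p(\mathfrak{C}_{1})$. The key technical lemma to prove is that this is always possible. The intended argument is by contradiction: if every realization of $q$ lay in $p(\mathfrak{C}_{1})$, then by $|q|<\kappa$, compactness, and the regularity of $\kappa$, a single finite subtype $\psi(y,\bar{b})$ of $q$ would force $\varphi(a_{\beta},y)$ for $\kappa$-many $\beta<\kappa$; an Erdős--Rado extraction from $I$ (feasible because $\lambda$ is chosen large and $\bar{b}$ is finite) together with automorphisms of $\mathfrak{C}_{1}$ fixing $B$ pointwise then allows one to shift the situation to produce a realization of $q$ whose $\varphi$-cut in $I$ is bounded below $\kappa$, contradicting the assumption. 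The combinatorial core of this step is of the same flavor as the proof of Fact~\ref{fact:exact saturation}(2), with the additional observation that $\varphi\in L$ ensures that the omitted type is local in the reduct, yielding the ``moreover'' clause.
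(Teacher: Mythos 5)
Your proposal takes a genuinely different route from the paper, but as written it has two gaps, one of which is structural.

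First, the bookkeeping you describe is not possible in general. You insist on $|M_\alpha|<\kappa$ for $\alpha<\kappa$, so that $|M|=|M_\kappa|=\kappa$. But a $\kappa$-saturated model of an \emph{unstable} theory typically must have size $>\kappa$: if $\mu<\kappa$ and $T$ is unstable, there is a set $A$ of size $\mu$ with $|S_L(A)|>\mu$ (indeed $|S_L(A)|$ can be as large as $\ded(\mu)$ or $2^{\mu}$), and a $\kappa$-saturated model must realize all of these pairwise-inconsistent types. So, e.g., for $\kappa=\aleph_1$ with $2^{\aleph_0}>\aleph_1$, every $\aleph_1$-saturated model of an unstable theory has size $\geq 2^{\aleph_0}>\kappa$, and your invariant $|M_\alpha|<\kappa$ cannot be maintained. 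The paper avoids this entirely by letting $|M_{i+1}|=2^{|M_i|}$ grow; the final model has size $\beth_\kappa(|T|)$, and regularity of $\kappa$ alone gives $\kappa$-saturation.

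Second, and more fundamentally, the ``key technical lemma'' is unproven and the sketch given is circular. You want: if every realization of $q$ realizes $p$, derive a contradiction by applying an automorphism $\sigma$ fixing $B$ pointwise to obtain a realization of $q$ with bounded $\varphi$-cut in $I$. But $\sigma$ fixing $B$ sends realizations of $q$ to realizations of $q$; if by hypothesis every realization of $q$ realizes $p$, then so does $\sigma(c)$ for any $c\models q$, and no contradiction is produced. In other words, ``$q\vdash p$'' is invariant under automorphisms fixing $\dom(q)=B$, so the shifting argument cannot refute it. Moreover, even if this lemma were established, you would need the stronger statement that the entire Skolem hull $\mathrm{Sk}(M_\alpha\cup\{c\})$ avoids $p$ (since $M_{\alpha+1}$ must again be an $L_1$-elementary submodel), which your sketch does not address.

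The paper's proof sidesteps all of this by \emph{not} pre-selecting the omitted type. It builds a special chain $M_0\prec M_1\prec\cdots\prec M_\kappa$ with $M_{i+1}$ a $|M_i|^+$-saturated model of $T_1$ of size $2^{|M_i|}$, so $M_\kappa$ is $\kappa$-saturated (regularity) and $|M_\kappa|=\beth_\kappa(|T|)$. Instability plus the universality of the chain gives $|S_L(M_\kappa\restriction L)|>|M_\kappa|$, while the number of $L$-types invariant over some $M_i$ is $\leq |M_\kappa|$, so some $p\in S_L(M_\kappa)$ splits over every $M_i$. Fixing a constant splitting formula $\varphi$ on a cofinal $E\subseteq\kappa$ (possible as $\kappa>|T|$ is regular) then yields the unrealized local type $\{\varphi(x,a_i)\wedge\neg\varphi(x,b_i):i\in E\}$. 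You should compare your attempt with this counting argument; it is both simpler and does not require omitting a chosen type along the construction.
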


\begin{proof}
The proof is almost exactly the same as the proof of \cite[Fact 2.5]{20-Kaplan2015}.

Let $M_{0}\models T_{1}$ be of size $\left|T\right|$. For $i\leq\kappa$,
define a continuous increasing sequence of models $M_{i}$ where $\left|M_{i+1}\right|=2^{\left|M_{i}\right|}$
and $M_{i+1}$ is $\left|M_{i}\right|^{+}$-saturated. Hence $M_{\kappa}$
is $\kappa$-saturated and $\left|M_{\kappa}\right|=\beth_{\kappa}\left(\left|T\right|\right)$.

As $T$ is unstable, and $M_{\kappa}$ is $\beth_{\kappa}\left(\left|T\right|\right)^{+}$-universal
$\left|S_{L}\left(M_{\kappa}\restriction L\right)\right|>\beth_{\kappa}\left(\left|T\right|\right)$
(for an explanation see the proof of \cite[Fact 2.5]{20-Kaplan2015}). 

However, as the number of $L$-types over $M_{\kappa}$ which are
invariant (i.e., which do not split) over $M_{i}$ is $\leq2^{2^{\left|M_{i}\right|}}\leq\beth_{\kappa}\left(\left|T\right|\right)$,
there is $p\left(x\right)\in S_{L}\left(M_{\kappa}\right)$ which
splits over every $M_{i}$. Hence for each $i<\kappa$, there is some
$L$-formula $\varphi_{i}\left(x,y\right)$ and some $a_{i},b_{i}\in M_{\kappa}$
such that $a_{i}\equiv_{M_{i}}b_{i}$ and $\varphi_{i}\left(x,a_{i}\right)\land\neg\varphi_{i}\left(x,b_{i}\right)\in p$.
As $\kappa>\left|T\right|$, there is a cofinal subset $E\subseteq\kappa$
such that for $i\in E$, $\varphi_{i}=\varphi$ is constant. Let $q\left(x\right)$
be $\set{\varphi\left(x,a_{i}\right)\land\neg\varphi\left(x,b_{i}\right)}{i\in E}$.
Then the local type $q$ is not realized in $M_{\kappa}$. 
\end{proof}

\section{\label{sec:PC-Exact-saturation-for stable}PC-Exact saturation for
stable theories}

The goal of this section is Theorem \ref{thm:PC exact saturation for stable theories}:
assuming that $T$ is a strictly stable countable complete theory
in the language $L$ and $\mu$ is a cardinal in which $T$ is \emph{not}
stable, we will find a countable $T_{1}\supseteq T$ in the language
$L_{1}\supseteq L$ such that if $M\models T_{1}$ and $M\upharpoonright L$
is $\aleph_{1}$-saturated and locally $\mu$-saturated, then $M\upharpoonright L$
is $\mu^{+}$-saturated.

In this section and in the sections that follow, we will often work with trees (usually $\omega^{<\omega}$).  We will write $\unlhd$ to denote the tree partial order and $\perp$ to denote the relation of incomparability, i.e. $\eta \perp \nu$ if and only if $\neg (\eta \unlhd \nu) \wedge \neg(\nu \unlhd \eta)$.  

\subsection{\label{subsec:Description-of-the expansion}Description of the expansion}

We will define our desired theory $T_{1}$ by choosing a certain model
of $T$, describing an expansion, and taking $T_{1}$ to be the theory
of this structure in the expanded language. We will assume that $L$
is disjoint from all symbols we are about to present.  As $T$ is
not superstable, we can use the following fact. 
\begin{fact}
\label{fact:not supersimple -> 2 incon}\cite[Proposition 3.5]{ArtemNick}
If $\kappa\left(T\right)>\aleph_{0}$ (namely, $T$ is not supersimple),
then there is a sequence of formulas $\sequence{\psi_{n}\left(x,y_{n}\right)}{n<\omega}$
(where $x$ is a single variable and the $y_{n}$'s are variables
of varying lengths) and a sequence $\sequence{a_{\eta}}{\eta\in\omega^{<\omega}}$
such that:
\begin{itemize}
\item $a_{\eta}$ is an $\left|y_{\left|\eta\right|}\right|$-tuple; for
$\sigma\in\omega^{\omega}$, $\set{\psi_{n}\left(x,a_{\sigma\restriction n}\right)}{n<\omega}$
is consistent; for every $\eta\in\omega^{n},\nu\in\omega^{m}$ such
that $\eta\perp\nu$, $\left\{ \psi_{n}\left(x,a_{\eta}\right),\psi_{m}\left(x,a_{\nu}\right)\right\} $
is inconsistent. 
\end{itemize}
\end{fact}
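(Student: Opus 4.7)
My plan is to exploit $\kappa(T) > \aleph_0$ to extract an infinite forking chain inside some type, and then to assemble the corresponding dividing formulas into a tree with the required branching and incompatibility structure.

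First, using the definition of $\kappa(T)$, I would choose a type $p(x)$ and an increasing chain $B_0 \subseteq B_1 \subseteq \cdots$ such that $p\restriction B_{n+1}$ forks over $B_n$ for each $n < \omega$. For each $n$, pick a formula $\varphi_n(x, b_n) \in p\restriction B_{n+1}$ that divides over $B_n$, witnessed by a $B_n$-indiscernible sequence $\langle b_n^i : i < \omega\rangle$ with $b_n^0 = b_n$ and $\{\varphi_n(x, b_n^i) : i < \omega\}$ being $k_n$-inconsistent. By bundling $k_n - 1$ consecutive entries of the sequence into a single tuple and replacing $\varphi_n$ by the corresponding conjunction, I may assume $k_n = 2$, so the resulting formula $\varphi_n'$ has pairwise inconsistent instances on distinct bundles.

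Next, I would build the tree of parameters $\langle c_\eta : \eta \in \omega^{<\omega}\rangle$ by recursion on levels: given $c_\eta$ at level $n$ such that $\bigcup_{i \leq n} \{\varphi_i'(x, c_{\eta\restriction i})\}$ is an automorphic image of a consistent fragment of $p$, use an automorphism over $B_{n+1}$ inside the monster to transport the level-$(n+1)$ indiscernible sequence on top of $\eta$, thus obtaining pairwise $\varphi_{n+1}'$-incompatible siblings $\{c_{\eta \frown \langle i\rangle} : i < \omega\}$. This recursion guarantees that along every branch $\sigma \in \omega^\omega$ the formulas $\{\varphi_n'(x, c_{\sigma\restriction n}) : n < \omega\}$ are consistent, while all siblings are pairwise incompatible via the formula at their common level.

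Finally, to upgrade sibling-level incompatibility to the cross-level incompatibility demanded by the statement, I would encode the whole ancestry into each node. Set $a_\eta := (c_{\eta\restriction 1}, \ldots, c_\eta)$ and $\psi_n(x, y_n) := \bigwedge_{1 \leq i \leq n} \varphi_i'(x, y_n[i])$. For $\sigma \in \omega^\omega$, the set $\{\psi_n(x, a_{\sigma\restriction n}) : n < \omega\}$ collapses to the consistent branch type. If $\eta \perp \nu$, let $\ell$ be the smallest level at which they disagree; then $\eta\restriction\ell$ and $\nu\restriction\ell$ are distinct siblings, so $\varphi_\ell'(x, c_{\eta\restriction\ell})$ and $\varphi_\ell'(x, c_{\nu\restriction\ell})$ are inconsistent, and each is implied by $\psi_{|\eta|}(x, a_\eta)$ and $\psi_{|\nu|}(x, a_\nu)$ respectively. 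The main obstacle is coordinating the recursion so that branch-consistency is maintained at every step while the sibling dividing property is preserved; this is handled by iterated use of automorphisms of the monster fixing the appropriate $B_n$ together with the standard characterization of dividing via $k$-inconsistent indiscernible sequences.
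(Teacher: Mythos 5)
The paper does not prove this statement; it is a black-box citation to Chernikov--Ramsey, so there is no in-paper argument to compare against. Your outline has the right skeleton (forking chain $\to$ dividing tree $\to$ ancestry-encoding to upgrade sibling inconsistency to full incomparability inconsistency), and the last step, where you set $a_{\eta}=(c_{\eta\restriction 1},\dots,c_{\eta})$ and $\psi_n(x,y_n)=\bigwedge_{i\leq n}\varphi_i'(x,y_n[i])$, is indeed exactly how cross-level incompatibility is extracted from pure sibling incompatibility.

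However, there is a genuine gap in the bundling step, and it propagates into your recursion invariant. After you replace $\varphi_n$ by $\varphi_n'(x,\bar{y}):=\bigwedge_{j<k_n-1}\varphi_n(x,y_j)$ and the parameter $b_n$ by the first bundle $\bar{b}_n=(b_n^0,\dots,b_n^{k_n-2})$ from a $B_n$-indiscernible dividing sequence, there is no reason for the bundled chain $\{\varphi_n'(x,\bar{b}_n):n<\omega\}$ to remain consistent. A realization $c$ of the original chain satisfies $\varphi_n(x,b_n^0)=\varphi_n(x,b_n)$, but nothing forces $c\models\varphi_n(x,b_n^j)$ for $0<j<k_n-1$; in fact, if you make the dividing sequence indiscernible over $c$ then by $k_n$-inconsistency $c$ realizes at most $k_n-1$ of the $\varphi_n(x,b_n^i)$ and we cannot control which ones. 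So ``I may assume $k_n=2$'' is not free, and your stated invariant that $\bigcup_{i\leq n}\{\varphi_i'(x,c_{\eta\restriction i})\}$ is an automorphic image of a consistent fragment of $p$ is not achievable at step $0$: the extra conjuncts $\varphi_n(x,b_n^j)$ are not formulas of $p$. To repair this one must arrange the dividing sequences (or re-choose the realization level by level) so that the \emph{thickened} branch type $\{\varphi_i(x,b_i^j):i<\omega,\ j<k_i-1\}$ is consistent while retaining $2$-inconsistency across bundles and the indiscernibility needed to transport by automorphisms; this coordination is the real content of the cited proposition and is not addressed in your sketch. Separately, the phrase ``use an automorphism over $B_{n+1}$'' should be ``an automorphism carrying the prefix $\bar b_{\leq n}$ of the base chain onto $c_{\eta\restriction 1}\dots c_\eta$'', and the first-disagreement index in the final paragraph is off by one ($\eta\restriction\ell$ and $\nu\restriction\ell$ are the distinct siblings when $\ell-1$ is the first coordinate of disagreement); these are cosmetic but worth fixing.
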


Fix a countable model $M_{0}\models T$ such that $\sequence{a_{\eta}}{\eta\in\omega^{<\omega}}$
is contained in $M_{0}$, and we will describe our expansion. Since
$M_{0}$ is countable we may assume that its universe is $\cal{\omega}\cup\omega^{<\omega}\cup\SS_{fin}\left(L\right)$
where $\SS_{fin}\left(L\right)$ is the set of all finite subsets
of formulas from $L$ in a fixed countable set of variables $\set{v_{i}}{i<\omega}$.
First expand $M_{0}$ by adding a predicate $\cal N$ for $\omega$
and adding $+,\cdot,<$ on $\cal N$ and a bijection $e:\mathcal{N}\to M_{0}$.
 We also have a predicate $\cal T$ for $\omega^{<\omega}$ on which
we add the order $\unlhd$. Add two functions $l:\mathcal{T}\to\mathcal{N}$
and $\mathrm{eval}:\mathcal{T}\times\mathcal{N}\to\mathcal{N}$ such
that $l$ is the length function and $\mathrm{eval}$ is the function
defined by $\mathrm{eval}\left(\eta,n\right)=\eta\left(n\right)$
for $n<l\left(\eta\right)$ and otherwise $\mathrm{eval}\left(\eta,n\right)=0$
(outside of their domain we define them in an arbitrary way). Note
that, in terms of this structure, if $\eta\in\mathcal{T}$ and $n\in\mathcal{N}$,
the concatenation $\eta\frown\left\langle n\right\rangle $ is defined
as the unique element $\nu$ of $\mathcal{T}$ of length $l\left(\eta\right)+1$
such that $\mathrm{eval}\left(\nu,l\left(\eta\right)\right)=n$ and
for $k<l\left(\eta\right)$, $\mathrm{eval}\left(\nu,k\right)=\mathrm{eval}\left(\eta,k\right)$.
We similarly define $\left\langle n\right\rangle \concat\eta$. For
notational simplicity, when $\eta\in\mathcal{T}$ and $k<l\left(\eta\right)$,
we will write $\eta\left(k\right)$ instead of $\mathrm{eval}\left(\eta,k\right)$.
Additionally, we will always refer to $\mathcal{N}$ for the natural
numbers predicate and use $\omega$ for the standard natural numbers.
For convenience we add a predicate $\mathcal{M}$ for the universe. 

We will write $n^{n}$ for the definable set of $\eta\in\mathcal{T}$
such that $l\left(\eta\right)=n$ and for all $k<n$, $\eta\left(k\right)<n$.
Let $i$ be a function with domain $\set{\left(\eta,n\right)}{\eta\in n^{n},n\in\mathcal{N}}$
and range $\mathcal{N}$ such that $i\left(-,n\right):n^{n}\to\mathcal{N}$
is an injection onto an initial segment of $\mathcal{N}$. 

We will add a predicate $\mathcal{L}$ for $\SS_{fin}\left(L\right)$,
together with $\subseteq$ giving containment and a truth predicate;
formally, the set of formulas $\cal L_{0}$ is identified with atoms
in the Boolean algebra $\cal L$ and the truth valuation is a function
$TV$ from $\cal L_{0}\times\cal T$ to $\left\{ 0,1\right\} $ such
that $TV\left(\left\{ \varphi\right\} ,\eta\right)=1$ iff $\varphi$
holds in $M_{0}$ with the assignment $v_{n}\mapsto e\left(\eta\left(n\right)\right)$
for $n<l\left(\eta\right)$ and $v_{n}\mapsto0$ otherwise.

 Add a function $d:\cal N\to\cal L_{0}$ mapping $n$ to $\left\{ \varphi_{n}\left(v_{0};v_{1},\ldots,v_{\left|y_{n}\right|}\right)\right\} $
(where the formulas $\varphi_{n}$ are as in Fact \ref{fact:not supersimple -> 2 incon}). Let $a:\mathcal{T}\to\mathcal{T}$
be a function such that if $\eta\in\omega^{<\omega}$ then $a\left(\eta\right)\in\omega^{\left|y_{l\left(\eta\right)}\right|}$
and there is some $c \in M_{0}$ such that $TV\left(d\left(i\right),\left\langle c\right\rangle \concat a\left(\eta|i\right)\right) = 1$ for all $i < l(\eta)$ (i.e. such that $\{\varphi_{i}\left(x;a\left(\eta|i\right)\right) : i<l\left(\eta\right)\}$ is consistent) and such that if $\eta,\nu\in\cal T$ are incomparable
then there is no $c \in M_{0}$ such that both $TV\left(d\left(l\left(\eta\right)\right),\left\langle c\right\rangle \concat a\left(\eta\right)\right) = 1$ and $TV\left(d\left(l\left(\nu\right)\right),\left\langle c\right\rangle \concat a\left(\nu\right)\right) = 1$ (i.e. $\{\varphi_{l(\eta)}(x;a(\eta)), \varphi_{l(\nu)}(x;a(\nu))\}$ is inconsistent). This is a direct translation of the properties of
the formulas $\varphi_{n}$ described above. 

We add a bijection $c:\mathcal{N}\to\mathcal{L}$ that associates
to each natural number a finite set of formulas. We add a predicate
$P\subseteq\mathcal{T}\times\mathcal{L}$ such that $\left(\eta,\Delta\right)\in P^{M_{0}}$
if and only if $\sequence{e\left(\eta\left(i\right)\right)}{i<l\left(\eta\right)}$
is a $\Delta$-indiscernible sequence that extends to a $\Delta$-indiscernible
sequence of countable length. Now we define a function $F:\mathcal{T}\times\mathcal{L}\times\mathcal{N}\to\mathcal{M}$
on $M_{0}$ so that, if $\left(\eta,\Delta\right)\in P^{M_{0}}$,
then $F^{M_{0}}\left(\eta,\Delta,-\right):\mathcal{N}\to M_{0}$ is
a $\Delta$-indiscernible sequence extending $\sequence{e\left(\eta\left(i\right)\right)}{i<l\left(\eta\right)}$.
Otherwise, $F^{M_{0}}\left(\eta,\Delta,m\right)$ is defined arbitrarily.

We let $M_{1}=\left(M_{0},\mathcal{N},\mathcal{T},\mathcal{L}\right)$
with this additional structure and constants for all elements (so
that models are elementary extensions), and we set $T_{1}=\mathrm{Th}\left(M_{1}\right)$
and $L_{1}=L\left(T_{1}\right)$. 

\subsection{Properties of $T_{1}$}
\begin{lem}
\label{lem:function coding} In $T_{1}$, there is a definable function
$H:\mathcal{N}\times\mathcal{N}\to\mathcal{N}$ such that if $M\models T_{1}$
and $M\upharpoonright L$ is $\aleph_{1}$-saturated, then for any
function $f:\omega\to\mathcal{N}^{M}$, there is some $m_{f}\in\cal N^{M}$
with the property that 
\[
H^{M}\left(m_{f},n\right)=f\left(n\right)
\]
for all $n\in\omega$. 
\end{lem}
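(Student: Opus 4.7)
My plan is to set $H$ to be the $L_{1}$-definable function defined by $H(m, n) = \mathrm{eval}(e(m), n)$ when $e(m) \in \mathcal{T}$ (and $H(m,n) = 0$ otherwise), so that producing $m_{f}$ reduces to finding a tree node $\eta^{*} \in \mathcal{T}^{M}$ of (necessarily nonstandard) length with $\eta^{*}(n) = f(n)$ for every standard $n$; one then takes $m_{f} = e^{-1}(\eta^{*})$.

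The heart of the argument is to translate ``being a tree node that encodes $f$'' into an $L$-type so that the $\aleph_{1}$-saturation of $M \upharpoonright L$ applies. For each standard $n$, let $\bar\eta_{n} \in \mathcal{T}^{M}$ be the unique tree node of length $n$ with coordinates $f(0), \ldots, f(n-1)$ (produced using the $L_{1}$-definable tuple-to-node operation), and consider the partial $L$-type
\[
p_{f}(x) = \set{\varphi_{n}(x, e(a(\bar\eta_{n})(0)), \ldots, e(a(\bar\eta_{n})(|y_{n}|-1)))}{n < \omega}.
\]
The crucial point is that although the $\bar\eta_{n}$ are extracted using $L_{1} \setminus L$ structure, the parameters inside $p_{f}$ are just specific elements of $M$, so $p_{f}$ is genuinely an $L$-type over countably many parameters. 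Finite consistency follows from the branch clause of Fact \ref{fact:not supersimple -> 2 incon} via elementarity: the $L_{1}$-sentence asserting that for every nested chain $\eta_{0} \unlhd \cdots \unlhd \eta_{k}$ in $\mathcal{T}$ the formulas $\varphi_{i}(x, a(\eta_{i}))$ are jointly consistent holds in $M_{1}$, hence in $M$. Thus $\aleph_{1}$-saturation of $M \upharpoonright L$ yields some $c \in M$ realizing $p_{f}$.

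To extract $\eta^{*}$ from $c$, I would set $c' = e^{-1}(c) \in \mathcal{N}^{M}$ and consider the $L_{1}$-formula
\[
\Psi(k) \equiv \exists \eta \in \mathcal{T}\,\bigl(l(\eta) = k \wedge TV(d(k), \langle c'\rangle \concat a(\eta)) = 1\bigr)
\]
in the variable $k \in \mathcal{N}$. For each standard $k$, the tree node $\bar\eta_{k}$ witnesses $\Psi(k)$. Since any $\aleph_{1}$-saturated infinite structure has cardinality at least $\aleph_{1}$, the bijection $e$ forces $|\mathcal{N}^{M}| \geq \aleph_{1}$, so $\mathcal{N}^{M}$ is nonstandard; overspill (applying the least element principle, inherited from $M_{1}$, to the definable set $\{k : \neg\Psi(k)\}$) then delivers a nonstandard $K \in \mathcal{N}^{M}$ with $\Psi(K)$. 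Let $\eta^{*} \in \mathcal{T}^{M}$ be a corresponding witness of length $K$. The inconsistency clause in the definition of $a$ forbids $\bar\eta_{k} \perp \eta^{*}$ (both $TV$-equations hold for the same $c'$), so $\bar\eta_{k}$ and $\eta^{*}$ are comparable; since $l(\bar\eta_{k}) = k < K$, we conclude $\bar\eta_{k} \unlhd \eta^{*}$, giving $\eta^{*}(j) = f(j)$ for every standard $j$.

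The main obstacle is the first step, where we must arrange for $p_{f}$ to be expressible in $L$ (rather than $L_{1}$) so that $\aleph_{1}$-saturation of the reduct yields a realization. The whole apparatus $(d, TV, a, e)$ in the expansion is designed precisely for this encoding/decoding: the branch structure of the $\varphi_{n}$ provides a consistent $L$-type whose realization $c$ can later be converted back, via overspill applied internally to $\mathcal{N}^{M}$, into an internal tree node, and the incomparability-inconsistency property is exactly what forces the recovered node to extend the graph of $f$ rather than some other branch.
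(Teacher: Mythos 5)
Your proof is correct, but takes a genuinely different route from the paper's. Both arguments realize essentially the same $L$-type $p_f$ (finite consistency coming from the branch clause of Fact \ref{fact:not supersimple -> 2 incon}) using $\aleph_1$-saturation of $M\upharpoonright L$, and both rely on the incomparability clause in the definition of $a$; the difference lies in where the decoding happens. The paper defines $H(g,n)$ to search internally for a tree node $\eta\concat\langle k\rangle$ of length $n+1$ satisfying $TV\left(d(n+1),\langle g\rangle\concat a(\eta\concat\langle k\rangle)\right)=1$ and output $k$; the incomparability clause shows this $H$ is well-defined, and once $c$ realizes the type, $m_f=e^{-1}(c)$ works immediately with no further argument. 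You instead take the simpler $H(m,n)=\mathrm{eval}(e(m),n)$, which pushes the decoding into the construction of $m_f$: after realizing $p_f$ by $c$, you need one additional overspill step to extract a nonstandard tree node $\eta^{*}$ whose standard coordinates agree with $f$, and you invoke the incomparability clause there (to force $\bar\eta_k\unlhd\eta^{*}$) rather than to prove well-definedness of $H$. The paper internalizes the question ``which $k$ did $c$ commit to at level $n$?'' into $H$ itself and so avoids overspill in this lemma; your version makes $H$ conceptually transparent (pure evaluation of a tree node) at the cost of the extra step. Both are sound.
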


\begin{proof}
Define $H:\mathcal{N}\times\mathcal{N}\to\mathcal{N}$ as follows: 
\begin{enumerate}
\item If there is $k\in\mathcal{N}$ and $\eta\in\mathcal{T}$ with $l\left(\eta\right)=n$
and $\models TV\left(d\left(n+1\right),\left\langle g\right\rangle \concat a\left(\eta\concat\left\langle k\right\rangle \right)\right)$,
we set $H\left(g,n\right)=k$ (note that the second parameter of $TV$
gets an element from $\cal T$ so $g$ should be in $\cal N$ for
this to be well-defined).
\item Else, we set $H\left(g,n\right)=0$. 
\end{enumerate}
Note that if there are $\eta,\eta'\in\mathcal{T}$ of length $n$
and $k'\in\mathcal{N}$ such that $\models TV\left(d\left(n+1\right),\left\langle g\right\rangle \concat a\left(\eta\concat\left\langle k\right\rangle \right)\right)\land TV\left(d\left(n+1\right),\left\langle g\right\rangle \concat a\left(\eta'\concat\left\langle k'\right\rangle \right)\right)$
then $\eta\frown\langle k\rangle$ and $\eta'\frown\langle k'\rangle$
must be comparable hence the same (since this is true in $M_{0}$).
It follows that $k=k'$, which shows that $H$ is well-defined. Also,
$H$ is clearly definable. 

Let $M\models T_{1}$ be a model such that $M\upharpoonright L$ is
$\aleph_{1}$-saturated. Let $f:\omega\to\mathcal{N}^{M}$ be arbitrary
and we will define a path $\langle\eta_{i}:i<\omega\rangle$ in $\mathcal{T}^{M}$
with $l\left(\eta_{i}\right)=i+1$ inductively, by setting $\eta_{0}=\left\langle f\left(0\right)\right\rangle $
and $\eta_{i+1}=\eta_{i}\concat\left\langle f\left(i+1\right)\right\rangle $
(by this we mean that $\mathrm{eval}\left(\eta_{0},0^{M}\right)=f\left(0\right)$,
etc.; this is possible to do in any model of $T_{1}$). By the choice
of $a$, we know that $\set{TV\left(d\left(i+1\right),\left\langle x\right\rangle \concat a\left(\eta_{i}\right)\right)}{i<\omega}$
 is a partial type which in turn implies that $\set{\varphi_{i+1}\left(x,e\left(a\left(\eta_{i}\right)\left(0\right)\right),\ldots,e\left(a\left(\eta_{i}\right)\left(\left|y_{i+1}\right|-1\right)\right)\right)}{i<\omega}$
is a partial type and, by $\aleph_{1}$-saturation, this is realized
by some $m_{f}'\in M$. Unraveling definitions, we have $H^{M}\left(e^{-1}\left(m_{f}'\right),n\right)=f\left(n\right)$
for all $n\in\omega$, so let $m_{f}=e^{-1}\left(m_{f}'\right)$. 
\end{proof}
\begin{lem}
\label{fcp lemma} Suppose that $T$ has the fcp, $M\models T_{1}$
and $M\upharpoonright L$ is locally $\mu$-saturated. Then for any
non-standard $n\in\mathcal{N}^{M}$, we have $\left|\left[0,n\right)\right|\geq\mu$. 
\end{lem}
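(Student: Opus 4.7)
The plan is to argue by contradiction: suppose $|[0,n)^M| < \mu$ for some non-standard $n \in \mathcal{N}^M$, and use fcp to exhibit a partial $\{\varphi\}$-type over a parameter set of size $< \mu$ that is finitely satisfiable in $M\upharpoonright L$ but not realized, violating local $\mu$-saturation. Fix an $L$-formula $\varphi(x,\bar y)$ witnessing fcp in $T$: for every standard $k<\omega$ there is a standard $N_{k}$ and a family $(\bar a_{i})_{i<N_{k}}$ in $M_{0}$ with $\{\varphi(x,\bar a_{i}):i<N_{k}\}$ being $k$-consistent but inconsistent.

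The crucial step is to internalize this fact in $T_{1}$. Using the arithmetic on $\mathcal{N}$, the tree $\mathcal{T}$ to code finite tuples of various lengths (via the length and evaluation functions), and the bijection $e:\mathcal{N}\to\mathcal{M}$ to convert codes to elements of the $L$-universe, one writes a single first-order $L_{1}$-formula $\Psi(k,N)$ saying: ``there exist $m$ with $k\leq m\leq N$ and a coded family $(\bar a_{i})_{i<m}$ in $\mathcal{M}$ such that $\{\varphi(x,\bar a_{i}):i<m\}$ is $k$-consistent and inconsistent''. The universal quantifier over $k$-subsets of $[0,m)$ becomes a quantifier over strictly increasing $\nu\in\mathcal{T}$ of length $k$ with values below $m$, while ``inconsistent'' is a negated $\exists x\in\mathcal{M}$. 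Since $\Psi(k,N_{k})$ holds in $M_{1}$ for every standard $k$, elementarity gives $\Psi(k,n)$ in $M$ for every standard $k$ (the non-standard $n$ exceeds every standard $N_{k}$, and $\Psi$ is monotone in its second argument). Now apply overspill to the definable subset $\{k\in\mathcal{N}^{M}:\Psi(k,n)\}$, which contains every standard natural; this yields a non-standard $k_{0}\leq n$ with $\Psi(k_{0},n)$ holding in $M$.

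Fix the resulting $m$ with $k_{0}\leq m\leq n$ and family $(\bar a_{i})_{i<m}$ in $\mathcal{M}^{M}$. Since $k_{0}$ is non-standard, every standard-finite subset of $\{\varphi(x,\bar a_{i}):i<m\}$ embeds into an internal $k_{0}$-subset, which is internally consistent; hence the partial $\{\varphi\}$-type $p(x):=\{\varphi(x,\bar a_{i}):i<m\}$ is finitely satisfiable in $M\upharpoonright L$. On the other hand, the internal inconsistency of the family means no element of $\mathcal{M}^{M}$ realizes all of $p$. The parameter set has size at most $|[0,m)^{M}|\leq|[0,n)^{M}|<\mu$, so extending $p$ to a complete local $\{\varphi\}$-type and applying local $\mu$-saturation of $M\upharpoonright L$ produces a realization of $p$ in $M$, a contradiction. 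The main obstacle is the internalization step: one must verify carefully that the machinery built into the expansion $M_{1}$ (arithmetic on $\mathcal{N}$, the length and evaluation maps on $\mathcal{T}$, and the bijection $e$) is expressive enough to state $\Psi(k,N)$ as a single first-order $L_{1}$-formula uniformly in $k$ and $N$. Once this coding is carried out, the remainder is just overspill in $\mathcal{N}^{M}$ followed by a single appeal to local $\mu$-saturation.
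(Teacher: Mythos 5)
Your proof is correct and follows the same strategy as the paper's: internalize the fcp witnesses as a single $L_{1}$-definable family of statements (using the arithmetic on $\mathcal{N}$, the tree $\mathcal{T}$, the evaluation maps, and the bijection $e$), apply overspill in $\mathcal{N}^{M}$ to get a nonstandard degree of consistency $k_{0}$, and observe that the resulting internal family yields a finitely-satisfiable but unrealized partial $\{\varphi\}$-type whose parameter set is bounded by $\left|\left[0,n\right)^{M}\right|$. The only difference is cosmetic: you argue by contradiction assuming $\left|\left[0,n\right)^{M}\right|<\mu$, while the paper argues directly that the unrealized type forces the parameter set (and hence $\left[0,n\right)^{M}$) to have size $\geq\mu$ by local $\mu$-saturation.
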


\begin{proof}
As $T$ has the finite cover property, there is a formula $\varphi\left(x,y\right)$
with $x$ a singleton such that in any model of $T$, for all $k<\omega$,
there are some $k'>k$ and $\sequence{a_{i}}{i\leq k'}$ such that
$\set{\varphi\left(x,a_{i}\right)}{i<k'}$ is inconsistent yet $k$-consistent.
For simplicity assume that $y$ is a singleton (otherwise replace
$\eta$ by a tuple of $\eta$'s in what follows). In particular, for
all $k<\omega$, $M_{1}$ satisfies $\chi\left(k\right)$ which asserts
that there exists $k'>k$ such that $k'<n$ and $\eta\in\mathcal{T}$
with $l\left(\eta\right)=k'$ such that the following two conditions are satisfied: 
\begin{enumerate}[label=(\Alph*)]
\item $\neg\exists x\in\mathcal{M}\left(\bigwedge_{i<k'}\varphi\left(x,e\left(\eta\left(i\right)\right)\right)\right)$. 
\item If $\nu\in\mathcal{T}$ satisfies $l\left(\nu\right)\leq k$ and $\mathrm{ran}\left(\nu\right)\subseteq\mathrm{ran}\left(\eta\right)$,
then 
\[
\exists x\in\mathcal{M}\bigwedge_{i<k}\varphi\left(x,e\left(\nu\left(i\right)\right)\right).
\]
\end{enumerate}
Note that this is expressible in $L_{1}$. By overspill, there is
$k\in\mathcal{N}^{M}$ nonstandard such that $\chi\left(k\right)$
holds witnessed by some $k'<n$. Without loss of generality, we may
assume $k'=n$, since it suffices to show $\left[0,k'\right)$ has
size $\mu$. Consequently, in $M$ there is an $\eta\in\mathcal{T}^{M}$
with $l\left(\eta\right)=n$ witnessing the above formula.

Note that, for each $N<\omega$, we have the following sentence is
satisfied in $M_{0}$: 
\[
\forall m>N\forall\eta\in\mathcal{T}\left[l\left(\eta\right)=m\to\left(\forall x_{0},\ldots,x_{N-1}\in\mathrm{ran}\left(\eta\right)\right)\left(\exists\xi\in\mathcal{T}\right)\left[\bigwedge_{i<N}\xi\left(i\right)=x_{i}\right]\right].
\]
It follows that $M$ satisfies this sentence and so, by Condition (B)
and compactness, we have that $\set{\varphi\left(x,e\left(\eta\left(i\right)\right)\right)}{i\in\left[0,n\right)}$
is consistent.

However, by the Condition (A), $\set{\varphi\left(x,e\left(\eta\left(i\right)\right)\right)}{i\in\left[0,n\right)}$
is not realized and therefore has size $\geq\mu$ by the local $\mu$-saturation
of $M\upharpoonright L$. 
\end{proof}
\begin{lem}
\label{tree lemma} Suppose $M\models T_{1}$ is given such that $M\upharpoonright L$
is $\aleph_{1}$-saturated. 
\begin{enumerate}
\item If $\left|M\right|\geq\mu$, then $\left|M\right|\geq\mu^{\aleph_{0}}$. 
\item If, moreover, $M$ has the property that $\left[0,n\right)\geq\mu$
for all nonstandard $n\in\mathcal{N}^{M}$, then for any nonstandard
$n\in\mathcal{N}^{M}$, $\left[0,n\right)\geq\mu^{\aleph_{0}}$. 
\end{enumerate}
\end{lem}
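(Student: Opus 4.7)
The plan for part (1) is a direct application of Lemma \ref{lem:function coding}. Since $T_1$ declares $e : \mathcal{N}\to\mathcal{M}$ to be a bijection, $|\mathcal{N}^M| = |M| \geq \mu$, so fix $A\subseteq\mathcal{N}^M$ of size $\mu$. For each $f : \omega\to A$, the lemma produces $m_f\in\mathcal{N}^M$ with $H^M(m_f,k)=f(k)$ for all $k<\omega$. The map $f\mapsto m_f$ is injective: distinct $f,f'$ differ at some $k\in\omega$, so $H^M(m_f,k)\neq H^M(m_{f'},k)$ forces $m_f\neq m_{f'}$. Hence $|\mathcal{N}^M|\geq|A|^{\aleph_0}=\mu^{\aleph_0}$, and thus $|M|\geq\mu^{\aleph_0}$.

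For part (2), given a nonstandard $n\in\mathcal{N}^M$, the plan is to refine the above argument so that the codes produced lie in $[0,n)$. Using overspill within the PA-like structure on $\mathcal{N}^M$, we pick a nonstandard $n_0$ with $n_0^{n_0}\leq n$: for instance, taking $n_0$ to be the largest $k$ with $k^k\leq n$ works, since $k^k$ is standard for each standard $k$, and therefore $n_0 \geq k$ for every such $k$. The hypothesis then gives $|[0,n_0)|\geq\mu$, and we fix $A\subseteq[0,n_0)$ of size $\mu$. The key device is the internal bijection $i(-,n_0):(n_0^{n_0})^M\to[0,n_0^{n_0})$ from the construction of $T_1$, whose image lies in $[0,n)$. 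The plan is, for each $f : \omega\to A$, to find an internal path $\hat\eta_f\in(n_0^{n_0})^M$ with $\hat\eta_f(k)=f(k)$ for $k<\omega$, and use $i(\hat\eta_f,n_0)\in[0,n)$ as the code; distinct $f$ then yield distinct codes by pairwise tree-inconsistency (Fact \ref{fact:not supersimple -> 2 incon}), giving $\mu^{\aleph_0}$ elements of $[0,n)$.

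The main obstacle is producing $\hat\eta_f$ for each external $f : \omega\to A$, which on its face requires $L_1$-saturation of $M$, not merely the $\aleph_1$-saturation of $M\upharpoonright L$. The plan to overcome this is to reduce the question to an $L$-saturation one via the tree encoding: realize the $L$-type $p_f=\{\varphi_{i+1}(x,e(a(\eta_i^f))):i<\omega\}$ from the proof of Lemma \ref{lem:function coding}, then apply overspill in $M$ to the condition that the length-$k$ initial segment of a path in $n_0^{n_0}$ witnesses the truth of the relevant $\varphi_j$ for the realizer along the way---a condition satisfied for every standard $k$---to obtain a nonstandard $k^*$ and, after truncation, an internal path $\hat\eta_f\in(n_0^{n_0})^M$ extending $f$. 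The tree-inconsistency ensures this extension is effectively unique, and $i(\hat\eta_f,n_0)\in[0,n)$ provides the desired code.
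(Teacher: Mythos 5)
Your part (1) is essentially identical to the paper's: count functions $f:\omega\to\mathcal{N}^M$, use Lemma \ref{lem:function coding} to produce an injective map $f\mapsto m_f$, done.

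Your part (2) is correct and has the same overall skeleton as the paper's --- take $n_0$ to be the largest $k$ with $k^k \leq n$ (a definable maximum, the paper calls this $\log^M(n)$; no genuine overspill is needed for this step despite your phrasing), reduce to showing $|(n_0^{n_0})^M|\geq\mu^{\aleph_0}$ via the internal injection $i(-,n_0)$, and then produce, for each $f:\omega\to[0,n_0)$, an internal path in $(n_0^{n_0})^M$ agreeing with $f$ on standard entries. Where you differ is the mechanism for producing that internal path. You re-enter the proof of Lemma \ref{lem:function coding}: realize the tree-type $p_f$ by some $c$, overspill on ``there exists $\eta\in n_0^{n_0}$ such that $c$ satisfies the consistency condition along $\eta$ up to level $k$,'' and then use the pairwise tree-inconsistency (which is preserved in $M$ by elementarity) to argue the resulting nonstandard path must agree with $f$ on standard coordinates. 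That works, but it's a detour. The paper instead treats Lemma \ref{lem:function coding} as a black box to obtain $m_f$, and then invokes a single first-order sentence, true in $M_0$ and hence in $M$, asserting that for all $j,m\in\mathcal{N}$ there is $\eta\in j^j$ with $\eta(i)=\min\{H(m,i),j-1\}$ for $i<j$; applying this to $j=n_0$, $m=m_f$ directly yields the desired $\tilde\eta_f$, with the truncation by $\min$ built into the sentence so that $\tilde\eta_f(i)=f(i)$ for standard $i$ because $f(i)<n_0$. The paper's route avoids both the re-realization and the overspill, and makes the internal path a definable function of $m_f$ rather than a witness extracted by overspill plus a uniqueness argument. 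Both arguments are sound; the paper's is the tidier package.
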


\begin{proof}
(1) Given any $f:\omega\to\mathcal{N}^{M}$, as $M\upharpoonright L$
is $\aleph_{1}$-saturated, we know, by Lemma \ref{lem:function coding},
there is some $m_{f}\in\cal N^{M}$ such that 
\[
H\left(m_{f},i\right)=f\left(i\right),
\]
for all $i<\omega$. As $e^{M}:\cal N^{M}\to\cal M^{M}$ is a bijection,
$\left|\cal M^{M}\right|=\left|\mathcal{N}^{M}\right|$ so, as there
are $\left|\mathcal{N}^{M}\right|^{\aleph_{0}}\geq\mu^{\aleph_{0}}$
functions $f:\omega\to\mathcal{N}^{M}$ and the map $f\mapsto m_{f}$
is clearly injective, we have $\left|M\right|\geq\mu^{\aleph_{0}}$,
which proves (1).

Now we prove (2). Recall that in $M_{0}\models T_{1}$, we defined
the function $i$ such that, for all $n$, $i\left(-,n\right):n^{n}\to\mathcal{N}$
is an injection onto an initial segment of $\mathcal{N}$.

Let $log:\mathcal{N}\to\mathcal{N}$ be the (definable) function given
by 
\[
log\left(n\right)=\max\set k{\mathrm{ran}\left(i\left(-,k\right)\right)\subseteq\left[0,n\right)}.
\]
On the standard natural numbers, $log\left(n\right)$ is the largest
$k$ such that $k^{k}\leq n$.

Given $n$ nonstandard, let $k=log^{M}\left(n\right)$. Because $i\left(-,k\right)$
gives an injection of $\left(k^{k}\right)^{M}$ into $\left[0,n\right)$,
we are reduced to showing that $\left|\left(k^{k}\right)^{M}\right|\geq\mu^{\aleph_{0}}$.
Note that $k$ is also nonstandard, so by hypothesis, $\left|\left[0,k\right)\right|\geq\mu$.
Let $f:\omega\to\left[0,k\right)$ be an arbitrary function. As in
(1), by Lemma \ref{lem:function coding}, there is some $m_{f}\in\mathcal{N}^{M}$
such that $H\left(m_{f},i\right)=f\left(i\right)$ for all $i<\omega$.

Note that if $j<\omega$, and $m\in\mathcal{N}^{M_{0}}$, then there
is an element $\eta\in j^{j}$ such that $\min\left\{ H^{M_{0}}\left(m,i\right),j-1\right\} =\eta\left(i\right)$
for all $i<j$, and hence 
\[
M_{0}\models\forall j\in\mathcal{N}\forall m\in\mathcal{N}\exists\eta\in j^{j}\left[\left(\forall i<j\right)\left[\eta\left(i\right)=\min\left\{ H\left(m,i\right),j-1\right\} \right]\right],
\]
and hence this sentence is contained in $T_{1}$. Applying this with
$j=k$ and $m=m_{f}$, we know there is $\tilde{\eta}_{f}\in\left(k^{k}\right)^{M}$
such that 
\[
\tilde{\eta}_{f}\left(i\right)=H\left(m_{f},i\right),
\]
for all $i<\omega$ (using that $H\left(m_{f},i\right)=f\left(i\right)<k$
for all $i<\omega$). Moreover, because $\tilde{\eta}_{f}\left(i\right)=f\left(i\right)$
for all standard $i$, we have $f\neq f'$ implies $\tilde{\eta}_{f}\neq\tilde{\eta}_{f'}$.
This shows $\left|\left(k^{k}\right)^{M}\right|\geq\mu^{\aleph_{0}}$,
which completes the proof. 
\end{proof}
\begin{fact}
\cite[Theorem II.4.6]{shelah1990classification} \label{fcp facts}
Suppose $T$ is nfcp, $\Delta$ is a finite set of formulas. There
is an $n\left(\Delta\right)<\omega$ such that, if $A$ is a set of
parameters and $\set{a_{\gamma}}{\gamma<\alpha}$ is a $\Delta$-indiscernible
set over $A$ and $n\left(\Delta\right)\leq\alpha<\beta$, then there
exist $a_{\gamma}$ for $\alpha\leq\gamma<\beta$ such that $\set{a_{\gamma}}{\gamma<\beta}$
is a $\Delta$-indiscernible set over $A$. 
\end{fact}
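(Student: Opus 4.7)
The plan is to deduce this from the standard equivalent formulation of nfcp: namely, $T$ has nfcp iff for every formula $\varphi(x,\bar y)$ there is $k_{\varphi}<\omega$ such that any $k_{\varphi}$-consistent collection of instances of the form $\varphi(x,\bar a)$ and $\neg\varphi(x,\bar a)$ is consistent. Given the finite $\Delta$, I would first form the finite set $\Delta^{*}$ consisting of all formulas $\psi(x,\bar y)$ obtained from some $\varphi\in\Delta$ by singling out one of its argument places as the distinguished variable $x$ and letting $\bar y$ list the remaining variables. I would then define $n(\Delta)$ to be $\max\{k_{\psi}:\psi\in\Delta^{*}\}$ plus the largest arity appearing in $\Delta$ (the additive slack will make bookkeeping of ``fresh indices'' easy below).

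The main step is to show that a single extension is possible: given a $\Delta$-indiscernible set $\set{a_{\gamma}}{\gamma<\alpha}$ over $A$ with $\alpha\geq n(\Delta)$, one can find $a_{\alpha}$ so that $\set{a_{\gamma}}{\gamma\leq\alpha}$ remains $\Delta$-indiscernible over $A$. For this, consider the Ehrenfeucht--Mostowski partial type $p(x)$ over $A\cup\set{a_{\gamma}}{\gamma<\alpha}$, consisting, for each $\psi(x,\bar y)\in\Delta^{*}$ and each tuple of distinct indices $\gamma_{1}<\dots<\gamma_{m}<\alpha$, of either $\psi(x,a_{\gamma_{1}},\dots,a_{\gamma_{m}})$ or its negation, the choice being forced by $\Delta$-indiscernibility of the original set (applied to $\varphi$ after placing a genuine element in the distinguished slot). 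Any realization of $p(x)$ is a legitimate $a_{\alpha}$.

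To check that $p(x)$ is consistent, it suffices, by nfcp applied to each $\psi\in\Delta^{*}$, to verify that every subset of $p(x)$ involving only instances and negated instances of a single $\psi$ is $k_{\psi}$-consistent. But any finite such subset involves only finitely many indices $\gamma_{1},\dots,\gamma_{N}$ from $\alpha$; since $\alpha\geq n(\Delta)$ exceeds $N$ plus the relevant arities, we may pick a fresh index $\delta<\alpha$ outside $\{\gamma_{1},\dots,\gamma_{N}\}$, and $a_{\delta}$ will satisfy the chosen finite subtype by $\Delta$-indiscernibility of the given set. Hence $p(x)$ is consistent, and its realization gives $a_{\alpha}$.

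The extension to arbitrary length $\beta>\alpha$ is then a routine transfinite iteration: at each stage $\gamma$ with $\alpha\leq\gamma<\beta$ we have a $\Delta$-indiscernible set of length $\gamma\geq n(\Delta)$, so we may append $a_{\gamma}$ by the same argument. I expect the only delicate point is ensuring that the choice of $n(\Delta)$ is large enough to yield $k_{\psi}$-consistency \emph{simultaneously} for every $\psi\in\Delta^{*}$; this is exactly why we take $n(\Delta)$ to dominate both $\max_{\psi}k_{\psi}$ and the arity bound, so that fresh witness indices are always available.
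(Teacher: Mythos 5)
The paper cites this as \cite[Theorem II.4.6]{shelah1990classification} and gives no proof of its own, so your argument has to stand on its own merits. Your overall strategy is the right one: extend a $\Delta$-indiscernible set one element at a time by realizing the Ehrenfeucht--Mostowski type $p(x)$ forced by the given set, show $p(x)$ is $n(\Delta)$-consistent via the fresh-index argument, and invoke nfcp to upgrade $n(\Delta)$-consistency to full consistency. But there is a genuine gap in the consistency check. You apply the single-formula nfcp bound $k_{\psi}$ to each $\psi\in\Delta^{*}$ separately, and this only yields consistency of each slice $p_{\psi}\subseteq p$ (the instances of that one $\psi$ and its negation). It does not yield consistency of $p=\bigcup_{\psi\in\Delta^{*}}p_{\psi}$: consistency of the parts does not imply consistency of the union (think of $\{x>a\}$ and $\{x<a\}$, each trivially consistent on its own). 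To make the argument go through you must first combine the formulas of $\Delta^{*}$ into a single formula $\psi_{\Delta}(x,\bar y,\bar z)$ via the standard coding (as in \cite[Theorem II.2.12]{shelah1990classification}), so that every member of $p$ becomes an instance or negated instance of the one formula $\psi_{\Delta}$ with varying parameters. Then apply nfcp \emph{once} to $\psi_{\Delta}$ to get a single bound $k_{\psi_{\Delta}}$, and define $n(\Delta)$ in terms of that; the per-formula argument you gave now controls $p$ as a whole and the proof closes.

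A secondary, quantitative point: even with the combination fix, a $k$-element subset of $p$ mentions up to $k\cdot M$ indices (where $M$ bounds how many of the $a_{\gamma}$ can appear as parameters in one formula of $\Delta^{*}$), not $k+M$. So to guarantee a fresh index $\delta<\alpha$ you need $n(\Delta)$ to dominate something like $k_{\psi_{\Delta}}\cdot M+1$ rather than $\max_{\psi}k_{\psi}+M$; your ``additive slack'' should be multiplicative. This is easy to repair, but as written the bound is too small.
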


\begin{rem}
\label{rem:change of statement}In \cite[Theorem II.4.6]{shelah1990classification},
the result is a bit more refined since it applies to $\Delta$-$n$-indiscernible
sequences and gives one $n\left(\Delta\right)$ which works for all
$n$ (note that being $\Delta$-indiscernible is equivalent to being
$\Delta$-$n$-indiscernible for some large enough $n$), but this
is not needed here.
\end{rem}

\begin{rem}
\label{rem:witness for fcp in ground model}If, in the context of
the previous fact, $M$ is a model and we are given that $A$ is finite
and $\alpha<\beta\leq\omega$ and $Aa_{<\alpha}\subseteq M$, then
we can choose $a_{\gamma}\in M$ for $\alpha\leq\gamma<\beta$ (if
$\beta=\omega$ we extend one by one). Likewise, if $Aa_{<\alpha}\subseteq M$
and $M$ is $\left(\left|A\right|+\left|\beta\right|\right)^{+}$-saturated,
we may choose $a_{\gamma}\in M$ for $\alpha\leq\gamma<\beta$. 
\end{rem}

\begin{defn}
The infinite indiscernible sequences $I_{1}$ and $I_{2}$ are \emph{equivalent}
if there is an infinite indiscernible sequence $J$ such that $I_{1}\frown J$
and $I_{2}\frown J$ are both indiscernible. If $M$ is a model and
$I$ is an infinite indiscernible sequence contained in $M$, we define
$\mathrm{dim}\left(I,M\right)$ by 
\[
\mathrm{dim}\left(I,M\right)=\min\set{\left|J\right|}{J\text{ maximal indiscernible in }M\text{ equivalent to }I}.
\]
If $\mathrm{dim}\left(I,M\right)=\left|J\right|$ for all maximal
indiscernible sequences $J$ in $M$ equivalent to $I$, then we say that
the dimension is \emph{true}. 
\end{defn}

\begin{fact}
\label{extension fact} Suppose $T$ is a countable stable theory
and $M\models T$.
\begin{enumerate}
\item \cite[Theorem III.3.9]{shelah1990classification} For any infinite
indiscernible $I$ contained in $M$, the dimension $\mathrm{dim}\left(I,M\right)$
is true (there it is stated with the added condition that $dim\left(I,M\right)\geq\kappa\left(T\right)$
but this condition is not necessary when $T$ is countable as follows
from the proof there; however since we will later assume $\aleph_{1}$-saturation
we can just apply the reference as is). 
\item \cite[Theorem III.3.10]{shelah1990classification} Assume that $M$
is $\aleph_{1}$-saturated. For an infinite cardinal $\lambda$, $M$
is $\lambda$-saturated if and only if for every infinite indiscernible
sequence $I$ in $M$ of single elements, we have $\mathrm{dim}\left(I,M\right)\geq\lambda$.
(In the reference there is no restriction on the length of the tuples,
but it is enough to consider sequences of elements as can be seen
from the proof there and the fact that saturation is implied by realizing
1-types.)\footnote{If the reader is not satisfied with this, they can alter the language
$L_{1}$ by adding predicates $P_{n}$ and functions $F_{n}$ to deal
with sequences of length $n$, for all $n<\omega$. }
\end{enumerate}
\end{fact}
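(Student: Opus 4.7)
The plan is to deduce both parts from the machinery of average types and nonforking in stable theories. For an infinite indiscernible sequence $I \subseteq M$ and any parameter set $A$, the \emph{average type} $\mathrm{Av}(I, A)$ consisting of $L(A)$-formulas satisfied by cofinitely many members of $I$ is a complete consistent type, by stability. A first observation is that two infinite indiscernible sequences $I_1, I_2 \subseteq M$ are equivalent in the sense of the definition precisely when $\mathrm{Av}(I_1, M) = \mathrm{Av}(I_2, M)$: the existence of a common tail $J$ with $I_\ell \frown J$ indiscernible forces the two averages to agree with $\mathrm{Av}(J, M)$, while conversely any common average type can be realized by a Morley sequence $J$ whose concatenation with either $I_\ell$ remains indiscernible.

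For (1), let $I \subseteq M$ be infinite indiscernible, set $p := \mathrm{Av}(I, M)$, and let $J_1, J_2 \subseteq M$ be maximal indiscernible sequences equivalent to $I$. Each $J_\ell$ behaves like a maximal Morley sequence in $p$ inside $M$: by indiscernibility together with the equivalence with $I$, every element of $J_\ell$ realizes $p$ restricted to the other elements, and maximality means no further realization of the appropriate nonforking extension exists in $M$. A standard back-and-forth using uniqueness of the nonforking extension of $p$ over any parameter set produces a bijection between $J_1$ and $J_2$, so $|J_1| = |J_2|$ and $\dim(I, M)$ is true. Countability of $T$, giving $\kappa(T) = \aleph_0$, sidesteps any complications about cardinals of nonforking bases.

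For (2), the forward direction is nearly immediate: if $M$ is $\lambda$-saturated and $J$ is a maximal indiscernible in $M$ equivalent to $I$ with $|J| < \lambda$, then $\mathrm{Av}(J, J)$ is realized in $M$ by some $b \notin J$, and $J \frown \langle b \rangle$ remains indiscernible, contradicting maximality. The converse is the harder direction: given $p \in S(A)$ with $|A| < \lambda$, one picks a countable $A_0 \subseteq A$ over which $p$ does not fork, uses $\aleph_1$-saturation to embed a Morley sequence $I$ in $p|A_0$ into $M$, and applies the dimension hypothesis to extend $I$ to a maximal indiscernible $J \subseteq M$ of cardinality at least $\lambda$. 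Careful tracking of the average type over $A$ then shows that at most $|A| < \lambda$-many elements of $J$ fail to realize the full type $p$, so a realization exists in $M$. The main obstacle is precisely this compatibility between the countable Morley sequence injected via $\aleph_1$-saturation and the potentially uncountable parameter set $A$; the full induction handling this interaction is the content of Shelah's proof of \cite[Theorem III.3.10]{shelah1990classification}, which my sketch essentially reproduces in outline.
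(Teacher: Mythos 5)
This is stated in the paper as a citation of \cite[Theorems III.3.9, III.3.10]{shelah1990classification} with parenthetical explanations; your attempt reconstructs the proofs of the cited results from scratch, which is a reasonable thing to try, and the overall use of average types and nonforking is the right toolkit. However, there are genuine gaps.

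The most concrete error is the assertion that ``countability of $T$, giving $\kappa(T)=\aleph_0$, sidesteps any complications.'' Countability does \emph{not} give $\kappa(T)=\aleph_0$: a countable strictly stable theory has $\kappa(T)=\aleph_1$. It is precisely because $\kappa(T)$ may exceed $\aleph_0$ that the reference's hypothesis $\dim(I,M)\geq\kappa(T)$ is nontrivial, and the paper's parenthetical remark exists exactly to explain why that hypothesis can nonetheless be dropped for countable $T$ (or is harmless under the $\aleph_1$-saturation the paper later imposes). Your appeal to $\kappa(T)=\aleph_0$ conflates countable with superstable, and in the present paper the whole Section~3 argument is about \emph{strictly} stable theories, so this is not a cosmetic slip.

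For part (1), the ``back-and-forth between $J_1$ and $J_2$'' is not the right mechanism: you cannot run a back-and-forth between two sets you have not yet shown to be equinumerous, since the forth step would eventually exhaust the smaller set. The actual argument is a counting one: if $J_1,J_2$ are maximal and equivalent, then $\mathrm{Av}(J_1,J_1)=\mathrm{Av}(J_2,J_1)$, so each formula $\varphi(x,\bar c)\in\mathrm{Av}(J_1,J_1)$ (with $\bar c$ from $J_1$) fails for only finitely many members of $J_2$; hence at most $|J_1|+\aleph_0$ many $b\in J_2$ can fail to realize $\mathrm{Av}(J_1,J_1)$, and if $|J_2|>|J_1|$ one extracts a witness that $J_1$ was not maximal. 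Uniqueness of the nonforking extension is not where the work lies. For part (2), the forward direction is fine, but in the converse you need $p\restriction A_0$ to be stationary before a Morley sequence determines $p$ via its average; taking $A_0$ merely a countable subset of $A$ over which $p$ does not fork is not enough --- you should take $A_0$ to be (the universe of) a countable elementary submodel, which $\aleph_1$-saturation makes available. With these repairs the outline matches the cited proofs, but as written the argument has both a false claim and a step that would not go through.
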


\begin{thm}
\label{thm:PC exact saturation for stable theories}Suppose $T$ is
a strictly stable countable theory and $\mu$ is a cardinal such that
$T$ is not stable in $\mu$. Then there is a countable theory $T_{1}\supseteq T$
with the property that for all $M\models T_{1}$, if $M\upharpoonright L$
is $\aleph_{1}$-saturated and locally $\mu$-saturated, then $M\upharpoonright L$
is $\mu^{+}$-saturated. 
\end{thm}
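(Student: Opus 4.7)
The plan is to use Fact~\ref{extension fact}(2) and reduce $\mu^+$-saturation of $M\upharpoonright L$ to the claim that every countable indiscernible sequence $I = \langle b_n : n < \omega\rangle$ of singletons in $M\upharpoonright L$ satisfies $\dim(I, M\upharpoonright L) \geq \mu^+$. Because $T$ is countable, strictly stable, and not stable at $\mu$, we have $\mu^{\aleph_0} > \mu$ and hence $\mu^{\aleph_0} \geq \mu^+$. As flagged in the commentary preceding the statement, the proof will split based on whether $T$ has the finite cover property.

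In the fcp case, Lemmas~\ref{fcp lemma} and~\ref{tree lemma}(2) combine to give $|[0,n)^M| \geq \mu^{\aleph_0} \geq \mu^+$ for every nonstandard $n \in \mathcal{N}^M$.  Given a countable $L$-indiscernible $I$, Lemma~\ref{lem:function coding} together with overspill produces $\eta \in \mathcal{T}^M$ of nonstandard length with $\eta(n) = e^{-1}(b_n)$ for every $n < \omega$.  The central move is to run a joint overspill argument inside $M$ producing a pseudo-finite $\Delta^* \in \mathcal{L}^M$ that contains the atom $\{\varphi\}$ for every standard $\varphi \in L$, together with a nonstandard $k' \leq l(\eta)$ satisfying $(\eta\upharpoonright k', \Delta^*) \in P^M$.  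Finite consistency of the defining conditions reduces to the observation that for a standard finite $\Delta \in \mathcal{L}^{M_0}$ and standard $k' < \omega$, the initial segment $\langle b_0, \dots, b_{k'-1}\rangle$ is $\Delta$-indiscernible and---by elementarity $M_0\preceq M$ together with a judicious choice of the ground model $M_0$ in which every finite $\Delta$-indiscernible sequence extends to an $\omega$-length $\Delta$-indiscernible---witnesses $P^M$-membership.  Applying $F^M(\eta\upharpoonright k', \Delta^*, -)$ then produces a sequence internally $\Delta^*$-indiscernible in $M$ of length $|\mathcal{N}^M|$ extending $\eta$; since $\Delta^*$ covers every standard atom, this sequence is externally $L$-indiscernible, and restricting to a nonstandard initial segment gives an $L$-indiscernible extension of $I$ in $M$ of size $\geq \mu^+$.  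Fact~\ref{extension fact}(1) then delivers $\dim(I, M\upharpoonright L) \geq \mu^+$.

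In the nfcp case, the roles of $P$ and $F$ are taken up by Fact~\ref{fcp facts} together with Remark~\ref{rem:witness for fcp in ground model}: the nfcp property guarantees that every $\Delta$-indiscernible sequence of length at least $n(\Delta)$ extends, and combined with local $\mu$-saturation of $M\upharpoonright L$ one iteratively extends $I$ within $M$ to $\Delta$-indiscernible sequences of length $\mu$.  A parallel pseudo-finite overspill argument together with the coding carried by $T_1$ then assembles these extensions across all finite $\Delta$'s into a single $L$-indiscernible extension of size $\geq \mu^+$.

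The principal obstacle I expect is the joint overspill producing $\Delta^*$, $k'$, and the $P^M$-membership: to verify finite consistency one must choose the ground model $M_0$ with enough internal richness that finite $\Delta$-indiscernibles genuinely admit $\omega$-length $\Delta$-indiscernible extensions in $M_0$.  This then propagates via $M_0 \preceq M$, but the delicate interplay between the structural choice of $M_0$, the axiomatization of $P$, $F$, and the tree data $a$ inside $T_1$, and the external $L$-indiscernibility of $I$ is the crux of the argument.
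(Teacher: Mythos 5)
Your high-level reduction via Fact~\ref{extension fact}(2) and the observation $\mu^{\aleph_0} \geq \mu^+$ matches the paper, and you correctly identify the two lemmas providing the cardinality lower bounds. However, your case split and the assignment of techniques to cases are inverted relative to the paper, and one of the two cases you propose has an unresolved gap that you yourself flag as the crux but do not close.

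The paper's dichotomy is not ``$T$ fcp'' versus ``$T$ nfcp''. It fixes the given indiscernible sequence $I$ with its internal codes $\eta_n$ and a chosen increasing sequence of finite formula sets $\Delta_n$, and splits on whether $M \models P(\eta_n,\Delta_n)$ for every standard $n$. When this holds (Case~1), the joint overspill you describe goes through exactly as you sketch and one applies $F^M$ to get an internal $\Delta(n_*)$-indiscernible sequence of length $|\mathcal{N}^M| \geq \mu^{\aleph_0}$, using Lemma~\ref{tree lemma}(1). Case~1 can occur regardless of whether $T$ has fcp. When $M \models \neg P(\eta_N,\Delta_N)$ for some standard $N$ (Case~2), the paper shows via Fact~\ref{fcp facts} and Remark~\ref{rem:witness for fcp in ground model} that $T$ must have fcp, and then \emph{does not} invoke $P$ or $F$ at all: the tree coding gives $\nu(n_*)$ internally $\Delta(n_*)$-indiscernible of nonstandard length $n_*$, and Lemma~\ref{fcp lemma} plus Lemma~\ref{tree lemma}(2) give $|[0,n_*)| \geq \mu^{\aleph_0}$.

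Your ``fcp case'' argument tries to use $P$/$F$, and this is precisely where it cannot work. $P^M$ is an internally defined predicate determined by $T_1$ and the elementary theory of $M$; the external fact that $\langle b_0,\dots,b_{k'-1}\rangle$ extends to the externally $L$-indiscernible sequence $I$ is not first-order and need not be reflected by $P^M$. Your proposed fix---``a judicious choice of the ground model $M_0$ in which every finite $\Delta$-indiscernible sequence extends to an $\omega$-length $\Delta$-indiscernible''---is, for sufficiently long finite sequences, exactly the nfcp property of $T$ and is therefore \emph{unavailable by hypothesis} in the fcp case. So your joint overspill has no way to establish finite consistency, and the case remains open. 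The resolution is the paper's: when $P$ fails along $I$ you abandon the $P$/$F$ machinery entirely, note that $T$ has fcp, and extract the needed long $L$-indiscernible extension from the internal $\nu(n_*)$ coded by the tree, with the cardinality bound supplied by Lemma~\ref{fcp lemma} and Lemma~\ref{tree lemma}(2).
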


\begin{proof}
As $T$ is stable in every cardinal $\kappa$ satisfying $\kappa^{\left|T\right|}=\kappa$,
we have $\mu^{\aleph_{0}}>\mu$. Let $T_{1}$ be the theory described
above. Let $M\models T_{1}$ be arbitrary with the property that $M\upharpoonright L$
is $\aleph_{1}$-saturated and locally $\mu$-saturated. By Fact \ref{extension fact}(1),
the dimension of any infinite indiscernible sequence in $M$ is true.
Since every infinite indiscernible sequence is equivalent to all
of its infinite initial segments, we know that, by Fact \ref{extension fact}(2),
to show $M\upharpoonright L$ is $\mu^{+}$-saturated, it suffices
to show that every length $\omega$ indiscernible sequence (with respect
to the language $L$) contained in $M$ extends to one that has length
$\geq\mu^{+}$. Note that $M\succ M_{1}$ so in particular contains
all standard numbers and finite sets of formulas. 

Fix an arbitrary indiscernible sequence $I=\sequence{a_{i}}{i<\omega}$
of elements in $M$ and we will find an extension of length $\geq\mu^{+}$.
For each $i<\omega$, there is some $\eta_{i}\in\mathcal{T}^{M}$
with $l\left(\eta_{i}\right)=i$ and $\sequence{e\left(\eta_{i}\left(j\right)\right)}{j<i}=\sequence{a_{j}}{j<i}$.
We fix also an increasing sequence of finite sets of formulas $\Delta_{n}$
such that $L=\bigcup_{n<\omega}\Delta_{n}$.
\begin{casenv}
\item [Case 1.]For all $n<\omega$, $M\models P\left(\eta_{n},\Delta_{n}\right)$.
\end{casenv}
As $M\restriction L$ is, in particular $\aleph_{1}$-saturated, we
may apply Lemma \ref{lem:function coding} to find $m_{1},m_{2}\in\cal N^{M}$
such that $H\left(m_{1},n\right)=e^{-1}\left(a_{n}\right)$ and $H\left(m_{2},n\right)=c^{-1}\left(\Delta_{n}\right)$
for all $n<\omega$. For $n\in\mathcal{N}^{M}$, let $\nu\left(n\right)$
be the element of $\mathcal{T}$ such that $l\left(\nu\left(n\right)\right)=n$
and $\mathrm{eval}\left(\nu\left(n\right),i\right)=H\left(m_{1},i\right)$
for all $i<n$. Notice that we have that the function $n\mapsto\nu\left(n\right)$
is definable in $M$ and $\nu\left(n\right)=\eta_{n}$ for all (standard)
$n<\omega$. Next, let $\Delta\left(n\right)=c\left(H\left(m_{2},n\right)\right)$
for all $n\in\mathcal{N}^{M}$. Likewise, we have that the function
$n\mapsto\Delta\left(n\right)$ is $M$-definable and $\Delta\left(n\right)=\Delta_{n}$
for all $n<\omega$.

By our assumptions, we have that for all $n<\omega$, 
\[
M\models\forall k\leq n\left(\Delta\left(k\right)\subseteq\Delta\left(n\right)\wedge\nu\left(k\right)\unlhd\nu\left(n\right)\right)\wedge P\left(\nu\left(n\right),\Delta\left(n\right)\right),
\]
and, hence, by overspill (which we may apply since nonstandard elements
exists by $\aleph_{1}$-saturation), there is some nonstandard $n_{*}\in\mathcal{N}^{M}$
such that 
\[
M\models\forall k\leq n_{*}\left(\Delta\left(k\right)\subseteq\Delta\left(n_{*}\right)\wedge\nu\left(k\right)\unlhd\nu\left(n_{*}\right)\right)\wedge P\left(\nu\left(n_{*}\right),\Delta\left(n_{*}\right)\right).
\]
Since $n_{*}$ is nonstandard, we know that $\Delta\left(n_{*}\right)$
contains all standard formulas of $L$ and $\nu\left(n_{*}\right)$
extends $\nu\left(k\right)$ for all $k<\omega$. Since, additionally,
$M\models P\left(\nu\left(n_{*}\right),\Delta\left(n_{*}\right)\right)$,
it follows that $F^{M}\left(\nu\left(n_{*}\right),\Delta\left(n_{*}\right),-\right):\mathcal{N}^{M}\to M$
enumerates an $L$-indiscernible sequence extending $I$. (Note that
this was a property of $F^{M_{0}}$ and using the truth predicate
the fact that this is an indiscernible sequence is expressible in
$L_{1}$ so this is also true in $M$; this also uses the fact that
for standard formulas, the truth predicate gives the correct answer.)

The local $\mu$-saturation of $M\restriction L$ implies $\left|M\right|\geq\mu$,
so, by Lemma \ref{tree lemma}(1), we have $\left|\mathcal{N}^{M}\right|=\left|M\right|\geq\mu^{\aleph_{0}}\geq\mu^{+}$,
we have shown that $I$ extends to an indiscernible sequence of length
$\mu^{+}$.
\begin{casenv}
\item [Case 2.]There is an $N<\omega$ such that $M\models\neg P\left(\eta_{N},\Delta_{N}\right)$.
\end{casenv}
Note that it follows that for all $\omega>r\geq N$, $M\models\neg P\left(\eta_{r},\Delta_{N}\right)$
(since $\eta_{r}$ extends $\eta_{N}$ and this implication is true
in $M_{1}$). If $T$ was nfcp, then in particular this is true for
any $r\geq N,n\left(\Delta_{N}\right)$, where $n\left(\Delta_{N}\right)$
is from Fact \ref{fcp facts}. Thus by elementarity, 
\[
M_{1}\models\exists\eta\in\mathcal{T}\left[l\left(\eta\right)=r\wedge\neg P\left(\eta,\Delta_{N}\right)\wedge\sequence{e\left(\eta\left(i\right)\right)}{i<r}\text{ is }\Delta_{N}\text{-indiscernible}\right]
\]
hence $T$ has the finite cover property by choice of $r$ and Remark
\ref{rem:witness for fcp in ground model}. 
However, because $I$ is $L$-indiscernible, we have $\nu\left(n\right)$
is $\Delta_{n}$-indiscernible for all $n<\omega$. Therefore, we
have that for all $n<\omega$, $M$ satisfies the conjunction of the
following sentences (which can be expressed using the truth predicate): 

$\forall k\leq n\left(\left(\Delta\left(k\right)\right)\subseteq\Delta\left(n\right)\wedge\nu\left(k\right)\unlhd\nu\left(n\right)\right)$. 

$\forall k\leq n\left[\sequence{e\left(\nu\left(n\right)\left(i\right)\right)}{i<n}\text{is }\Delta\left(n\right)\text{-indiscernible}\right]$.

Thus, by overspill, there is some nonstandard $n_{*}\in\mathcal{N}^{M}$
such that $\sequence{e\left(\nu\left(n_{*}\right)\left(i\right)\right)}{i<n_{*}}$
is $L$-indiscernible and extends $I$. By Lemma \ref{fcp lemma}
and Lemma \ref{tree lemma}(2), we know $\left|\left[0,n_{*}\right)\right|\geq\mu^{\aleph_{0}}>\mu$,
so we have shown $I$ extends to an indiscernible sequence of length
$\geq\mu^{+}$.

This completes the proof. 
\end{proof}
\begin{cor}
\label{cor:PC ES iff stable}For a countable stable theory $T$ and
$\mu\geq2^{\aleph_{0}}$, the following are equivalent:
\begin{enumerate}
\item $T$ has PC-exact saturation in $\mu$. 
\item $T$ is stable in $\mu$. 
\end{enumerate}
\end{cor}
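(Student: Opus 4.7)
The corollary combines Theorem \ref{thm:PC exact saturation for stable theories} for the nontrivial direction with a routine stability-theoretic construction for the other. My plan is to handle the two implications separately.

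For $(1) \Rightarrow (2)$ I would argue by contrapositive. Suppose $T$ is not stable in $\mu$. By the stability spectrum theorem for countable theories, a countable superstable theory is stable in every cardinal $\lambda \geq 2^{\aleph_0}$; since $\mu \geq 2^{\aleph_0}$, this forces $T$ to be strictly stable. Theorem \ref{thm:PC exact saturation for stable theories} now supplies a countable $T_1 \supseteq T$ such that any $M \models T_1$ whose $L$-reduct is $\aleph_1$-saturated and locally $\mu$-saturated is in fact $\mu^+$-saturated. But any $\mu$-saturated structure is automatically $\aleph_1$-saturated (since $\mu \geq 2^{\aleph_0} > \aleph_0$) and trivially locally $\mu$-saturated (each local type extends to a complete type). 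Hence no $M \models T_1$ has $M \upharpoonright L$ exactly $\mu$-saturated, giving the failure of PC-exact saturation at $\mu$.

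For $(2) \Rightarrow (1)$, fix $T_1 \supseteq T$ with $|T_1| = \aleph_0$; I would construct $N \models T_1$ with $|N| = \mu$ whose $L$-reduct is $\mu$-saturated. Since $|N| = \mu$, the $L$-type $\{x \neq a : a \in N\}$ is consistent but unrealized, so $N \upharpoonright L$ is not $\mu^+$-saturated. The construction proceeds by an $L_1$-elementary chain $\langle N_i \rangle$ of models of $T_1$, each of size $\mu$: at successor stages one realizes in $N_{i+1} \upharpoonright L$ all $L$-types over subsets of $N_i \upharpoonright L$ of size less than $\mu$, using stability of $T$ in $\mu$ to bound the number of such types by $\mu$, and applying downward L\"owenheim--Skolem inside a large $L_1$-model of $T_1$ to keep $|N_{i+1}| = \mu$. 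The chain length is governed by $\cof(\mu)$: when $\mu$ is regular, length $\mu$ suffices, and when $\cof(\mu) = \aleph_0$---which under our hypotheses forces $T$ superstable---one invokes the classical indiscernible-dimension characterization of saturation in countable superstable theories to conclude.

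The real content is already absorbed in Theorem \ref{thm:PC exact saturation for stable theories}; relative to it, $(1) \Rightarrow (2)$ is essentially bookkeeping and $(2) \Rightarrow (1)$ is a routine application of classical stability theory. The only delicate point is matching the hypothesis $\mu \geq 2^{\aleph_0}$ to the stability spectrum so that the correct machinery (strictly stable on one side, or superstable when $\cof(\mu)=\aleph_0$ on the other) is invoked in each subcase.
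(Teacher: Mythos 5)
Your $(1)\Rightarrow(2)$ direction matches the paper: split on superstable versus strictly stable, and in the strictly stable case invoke Theorem \ref{thm:PC exact saturation for stable theories} together with the observation that $\mu$-saturation implies both $\aleph_1$-saturation and local $\mu$-saturation. That is exactly what the paper does.

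Your $(2)\Rightarrow(1)$ direction, however, departs from the paper and leaves a gap. The paper's argument is a two-liner: since $T$ is stable in $\mu$, by \cite[VIII.4.7]{shelah1990classification} $T$ has a saturated model $M$ of size $\mu$; saturated models are resplendent, so $M$ expands to a model of $T_1$ (for any $T_1 \supseteq T$ of the same size), and $|M| = \mu$ rules out $\mu^+$-saturation. Resplendence does all the work uniformly, with no case analysis. Your chain construction, by contrast, builds an $L_1$-elementary chain $\langle N_i \rangle$ of length depending on $\cof(\mu)$, and you dispose of the cases $\mu$ regular and $\cof(\mu)=\aleph_0$ (where $T$ is forced superstable). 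But you do not address singular $\mu$ with $\aleph_0 < \cof(\mu) < \mu$. This case is not vacuous: a strictly stable countable $T$ is stable in $\mu$ precisely when $\mu^{\aleph_0}=\mu$, and this can hold at singular $\mu$ of uncountable cofinality (e.g.\ strong limits of cofinality $\omega_1$). In that situation a chain of length $\mu$ does not obviously yield $\mu$-saturation, since a parameter set of size $\lambda$ with $\cof(\mu) \le \lambda < \mu$ need not sit inside any single $N_i$, and the superstable indiscernible-dimension shortcut you invoke for $\cof(\mu)=\aleph_0$ is unavailable. To close this you would again have to fall back on the general saturated-model existence theorem --- at which point you have reproduced the paper's argument, and the chain construction becomes superfluous. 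I would recommend simply citing the existence of a $\mu$-saturated model of size $\mu$ and then using resplendence to get the $T_1$-expansion, as the paper does.
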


\begin{proof}
(1) implies (2): if $T$ is superstable then (2) holds automatically.
Otherwise, we are done by Theorem \ref{thm:PC exact saturation for stable theories}. 

(2) implies (1): if $T$ is stable in $\mu$, then by \cite[Chapter VIII, Theorem 4.7]{shelah1990classification}
$T$ has a saturated model $M$ of size $\mu$. Since saturated models
are resplendent \cite[Theorem 9.17]{MR1757487} we can expand $M$
to a model $M'$ of $T_{1}$. 
\end{proof}

\section{\label{sec:PC-exact-saturation-for simple }PC-exact saturation for
simple and supersimple theories}

\subsection{Simple theories}

This section is devoted to the proof of the following theorem, which
in particular answers \cite[Question 9.31]{MR3666452} (about the
random graph). 
\begin{thm}
\label{thm:Main Thm simple theories} Assume that $T$ is a complete
simple unstable $L$-theory, $T_{1}\supseteq T$ is a theory in $L_{1}\supseteq L$,
$\left|T_{1}\right|\leq\left|T\right|$. Also, assume that $\kappa$
is a singular cardinal such that $\kappa\left(T\right)\leq\mu=\cof\left(\kappa\right)$,
$\left|T\right|<\kappa$, $\kappa^{+}=2^{\kappa}$ and $\square_{\kappa}$
holds (see Definition \ref{def:square}). Then $\PC{T_{1}}T$ has
exact saturation at $\kappa$. Moreover, there is a model $M\models T_{1}$
whose reduct to $L$ is $\kappa$-saturated but not $\kappa^{+}$-locally
saturated. 
\end{thm}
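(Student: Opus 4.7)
The plan is to construct an $L_1$-model $M$ of cardinality $\kappa$ whose $L$-reduct $N := M \upharpoonright L$ is $\kappa$-saturated yet omits a specified local $\varphi$-type of size $\kappa$, where $\varphi \in L$ witnesses the instability of $T$. This adapts the strategy behind Fact \ref{fact:exact saturation}(3) (ordinary exact saturation of simple theories at singular $\kappa$), with the added burden of (a) coherently tracking an expansion to $T_1$ through the construction and (b) arranging the omitted type to consist of instances of a single formula. Since $T$ is unstable, fix $\varphi(x,y) \in L$ with the order property; inside some large $T_1$-model, choose a long $L$-indiscernible sequence $\langle a_i : i < \kappa\rangle$ and parameters $\langle b_j : j < \kappa\rangle$ so that $\varphi(a_i,b_j)$ holds iff $i < j$. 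The target omitted type is then the local type $p(x) = \{\varphi(x,b_j) : j < \kappa\}$.

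I would then fix a continuous increasing sequence $\langle \kappa_\alpha : \alpha < \mu\rangle$ of regular cardinals cofinal in $\kappa$ with $\kappa_0 > |T_1|$, and build an $L_1$-elementary chain $\langle M_\alpha : \alpha < \mu\rangle$ of models of $T_1$ with $|M_\alpha| = \kappa_\alpha$ and $\{b_j : j < \kappa_\alpha\} \subseteq M_\alpha$, subject to two invariants: (iii) no element of $M_\alpha$ realizes $p$, and (iv) every $L$-type over a subset of $M_\alpha \upharpoonright L$ of size $< \kappa_\alpha$ that is non-forking over some set of size $< \kappa(T)$ has a realization in $M_{\alpha+1}$. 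Successor stages use highly saturated $L_1$-extensions together with the simplicity apparatus (non-forking, the independence theorem, and $\kappa(T)\leq\mu$) to choose realizations of $L$-types non-forkingly over a bounded base and amalgamate them with the type $\tp(a_\alpha / \{b_j : j < \kappa_\alpha\})$, forcing the new element to satisfy $\neg\varphi(x,b_j)$ for cofinally many $j$ and thereby preserving (iii). Limit stages of cofinality $<\mu$ are handled by means of the $\square_\kappa$-sequence, which lets us reflect the non-forking data coherently so that (iv) persists under union; the hypothesis $\kappa^+ = 2^\kappa$ is used to bound the number of $L$-types that must be catalogued. Taking $M := \bigcup_{\alpha<\mu}M_\alpha$, the reduct $N = M \upharpoonright L$ is $\kappa$-saturated by (iv) combined with $\kappa(T)\leq\mu$, while (iii) shows that $p$ — a local type consisting of $\kappa$ instances of $\varphi$ — is omitted, so $N$ is not locally $\kappa^+$-saturated.

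The main obstacle is the simultaneous balancing act of invariant (iv) against invariant (iii): realizing enough $L$-types for $\kappa$-saturation without ever producing an element that realizes $p$. This is where simplicity is decisive — the independence theorem and $\kappa(T) \leq \mu$ together guarantee that any non-forking $L$-type over a small base admits a realization compatible with ``falsifying infinitely many $\varphi(x,b_j)$'', because $\tp(a_\alpha/\{b_j:j<\kappa_\alpha\})$ is itself a non-forking type over a set of size $<\kappa$ and can be amalgamated freely with the new type. The set-theoretic assumptions $\square_\kappa$ and $\kappa^+ = 2^\kappa$ are needed precisely to perform this bookkeeping uniformly at singular $\kappa$; the PC aspect is handled almost freely because $|T_1| \leq |T| < \kappa$ means the $L_1$-expansion data at each stage is small relative to $M_\alpha$, so the invariants (ii)--(iv) survive passage to an $L_1$-elementary extension.
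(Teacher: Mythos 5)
Your proposal has the right ``flavor'' but several of its structural decisions cannot work, and the most important mechanism (how the bad type is kept unrealized) is not actually there.

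\textbf{Chain length is wrong.} You build a chain $\langle M_\alpha : \alpha < \mu\rangle$ of length $\mu = \operatorname{cf}(\kappa)$ with $|M_\alpha| = \kappa_\alpha$ and take the union. Invariant (iv) only realizes types over subsets of a single $M_\alpha$. But in $M = \bigcup_{\alpha<\mu} M_\alpha$ there are sets $A$ with $\mu \leq |A| < \kappa$ that are not contained in any $M_\alpha$ (e.g. pick one new element from each stage). No stage of your construction ever sees a full type over such an $A$, and after $\mu$ steps you have run out of room, so $\kappa$-saturation fails. The paper runs the construction for $\kappa^+$ steps, enumerating \emph{in advance} (via $2^\kappa=\kappa^+$) all types over subsets of size $<\kappa$ of a $\kappa^+$-long increasing union. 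The square principle $\square_\kappa$ is used precisely to make this $\kappa^+$-length chain coherent at limit stages of the outer construction, not to ``reflect non-forking data at limits of cofinality $<\mu$'' within a $\mu$-length chain. At each stage $\alpha<\kappa^+$ the paper carries a whole $\mu$-indexed sequence $\bar A_\alpha = \langle A_{\alpha,i}: i<\mu\rangle$; the $\mu$-indexing is internal to each stage, not the length of the construction.

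\textbf{Omission is not controlled.} Invariant (iii) is a universal statement: \emph{no element} of $M_\alpha$ realizes $p$. Your plan for successor stages is to amalgamate the new realization with $\tp(a_\alpha/\{b_j:j<\kappa_\alpha\})$ so that ``the new element'' falsifies $\varphi(x,b_j)$ for cofinally many $j$. But $M_{\alpha+1}$ contains far more than one new element (Skolem hulls, elementary submodel requirements, the extra witnesses forced by elementarity), and you give no mechanism controlling what $\varphi$-types those incidental elements have over $\{b_j : j<\kappa\}$. One realization with a good $\varphi$-type does not give (iii). The paper's solution is to use an \emph{$L_1$-indiscernible} sequence $I$ witnessing IP (not OP), to chop it into $\omega$-blocks $\bar a_\alpha$, and to require in the definition of the class $\mathcal C$ that for \emph{every finite tuple} $c$ in the model, $\bar a_\alpha$ remains $L_1$-indiscernible over $c$ for club-many $\alpha$. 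The omitted local type is the alternating type $\{\varphi(x,a_j): j\text{ even}\}\cup\{\neg\varphi(x,a_j): j\text{ odd}\}$: any realizer $b$ would break indiscernibility of $\bar a_\alpha$ over $b$ for some $\alpha$ in the club. Proving this indiscernibility condition survives each realization step is exactly the content of the Main Lemma, which combines non-forking base monotonicity, coheirs, and the independence theorem across a tree of automorphisms; this is far more delicate than ``amalgamate once.'' Your proposal does not address how $L_1$-indiscernibility (needed because the expansion must survive reducts and Skolem hulls) is preserved under the type-realization step.

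\textbf{Choice of unstable formula.} Using an order-property formula and the type $\{\varphi(x,b_j):j<\kappa\}$ would require a separate argument that \emph{this specific} local type is omitted; the alternating-IP type over an $L_1$-indiscernible sequence is tailored so that the indiscernibility preservation built into $\mathcal C$ automatically contradicts realization. It is not merely a stylistic preference.

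In summary: the blueprint (a singular cardinal chain construction using simplicity, square, and GCH-at-$\kappa$) is recognizable, but the construction length, the mechanism for type omission, and the role of the set-theoretic hypotheses are all materially off, and the heart of the argument --- the analogue of the paper's Main Lemma --- is absent.
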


The proof is somewhat similar to the proof of the parallel theorem
from \cite{20-Kaplan2015} (Fact \ref{fact:exact saturation} (3)),
with some important differences. The class $\cal M$ from there is
similar to the class $\cal C$ here, and the overall structure of
the proof is similar, but the proof of the main lemma (Lemma \ref{lem:Main lemma simple}
below) is quite different. 

We may assume that $T_{1}$ has built-in Skolem functions. Since
$T$ is unstable and simple it has the independence property, as witnessed
by some formula $\varphi\left(x,y\right)$ from $L$. Suppose that
$\kappa=\sum_{i<\mu}\lambda_{i}$ where the sequence $\sequence{\lambda_{i}}{i<\mu}$
is continuous, increasing, and each $\lambda_{i}$ is regular for
$i<\mu$ a successor. Also, assume that $\left|T\right|\leq\lambda_{0}$
(here we use the assumption that $\left|T\right|<\kappa$).  We work
in a monster model $\C_{1}$ of $T_{1}$, and denote its $L$-reduct
by $\C$. Let $I=\sequence{a_{\alpha}}{\alpha<\kappa}$ be an $L_{1}$-indiscernible
sequence of $y$-tuples of order type $\kappa$, which witnesses that
$\varphi$ has IP. For $i<\mu$, let $I_{i}=\sequence{a_{\alpha}}{\alpha<\lambda_{i}}$.
Also, for $\alpha<\kappa$, let $\bar{a}_{\alpha}$ be the sequence
$\sequence{a_{\omega\alpha+k}}{k<\omega}$. 
\begin{defn}
\label{def:the class C}Let $\cal C$ be the class of sequences $\sequence{A_{i}}{i<\kappa}$
such that:
\begin{enumerate}
\item For all $i<\kappa$, $A_{i}\prec\C_{1}$; $\left|A_{i}\right|\leq\lambda_{i}$;
$I_{i}\subseteq A_{i}$. 
\item The sequence $\sequence{A_{i}}{i<\kappa}$ is increasing continuous.
\item For all $i<\kappa$ and every finite tuple $c\in A_{i+1}$, there is
a club $E\subseteq\lambda_{i+1}$ such that for all $\alpha\in E$,
$\bar{a}_{\alpha}$ is $L_{1}$-indiscernible over $c$, where $\overline{a}_{\alpha} = (a_{\omega \alpha + k})_{k < \omega}$.
\end{enumerate}
For $\bar{A},\bar{B}\in\cal C$, write $\bar{A}\leq\bar{B}$ for:
for every $i<\mu$, $A_{i}\subseteq B_{i}$. 
\end{defn}

For example, letting $A_{i}=\Sk\left(I_{i}\right)$ (the Skolem hulls
of $I_{i}$) for $i<\mu$, $\bar{A}=\sequence{A_{i}}{i<\mu}\in\cal C$.
Then $\bar{A}$ is $\leq$-minimal. 
\begin{mainlemma}
\label{lem:Main lemma simple} Suppose that $\bar{A}\in\cal C$ and
let $A=\bigcup\set{A_{i}}{i<\mu}$. Suppose that $C\subseteq A$ has
size $<\kappa$ and $p\left(x\right)\in S_{L}\left(C\right)$. Then
there is $\bar{B}\in\cal C$ such that $\bar{A}\leq\bar{B}$ and $B=\bigcup\set{B_{i}}{i<\mu}$
realizes $p$. 
\end{mainlemma}

\begin{proof}
We want to make the following assumptions first. By maybe increasing
$C$ by a set of size $\leq\left|T\right|+\left|C\right|+\mu$ we
may assume that:
\begin{itemize}
\item [$(\star)$] $C\ind_{A_{i}\cap C}A_{i}$ and $A_{i}\cap C\prec\C_{1}$
for all $i<\mu$. ($\ind$ denotes non-forking independence.)
\end{itemize}
(This is straightforward, but for a proof see the beginning of the
proof in \cite[Main Lemma 3.11]{20-Kaplan2015}.)

Without loss of generality assume that  $p$ does not fork over $A_{*}=C\cap A_{0}$
(this uses the assumption that $\cof\left(\kappa\right)=\mu\geq\kappa\left(T\right)$).
(Let $i_{*}<\mu$ be minimal successor or 0 such that $p$ does not
fork over $C\cap A_{i}$, and let $\lambda_{i}'=\lambda_{i_{*}+i}$,
for $i<\mu$. If the lemma is true for $\lambda_{i}'$ (and $A_{i}'=A_{i_{*}+i}$)
instead of $\lambda_{i}$, and $\bar{B}'=\sequence{B_{i}'}{i<\mu}$
witnesses this (so $\left|B_{i}'\right|\leq\lambda_{i}'$, etc.),
then let $B_{i}=A_{i}$ for $i<i_{*}$ and for $i\geq i_{*}$, let
$j<\mu$ be such that $i_{*}+j=i$ and $B_{i}=B_{j}'$.)

Fix an enumeration of $A$ as $\sequence{b_{\alpha}}{\alpha<\kappa}$
such that, for $i<\mu$, $A_{i}$ is enumerated by $\sequence{b_{\alpha}}{\alpha<\lambda_{i}}$.

For each $i<\mu$, let $E_{i}\subseteq\lambda_{i+1}$ be a club such
that:
\begin{itemize}
\item If $\alpha\in E_{i}$ then $\alpha=\omega\alpha$; $\bar{a}_{\alpha}=\sequence{a_{\omega\alpha+k}}{k<\omega}$
is indiscernible over $A_{<\alpha}=\set{b_{\beta}}{\beta<\alpha}$
(which equals $A_{<\omega\alpha}$); $A_{<\alpha}\prec\C_{1}$; if
$\alpha\in E_{i}$ then $\bar{a}_{\beta}\subseteq A_{<\alpha}$ for
all $\beta<\alpha$ and $A_{<\alpha}\supseteq C\cap A_{i}$.
\end{itemize}
Its existence is proved as follows. First note that the set of $\alpha<\lambda_{i+1}$
for which $\alpha=\omega\alpha$ forms a club. Next, since the club
filter on $\lambda_{i+1}$ is $\lambda_{i+1}$-complete, for every
set $D\subseteq A_{i+1}$ of size $<\lambda_{i+1}$, there is a club
$E_{D}\subseteq\lambda_{i+1}$  such that if $\alpha\in E_{D}$ then
$\bar{a}_{\alpha}$ is indiscernible over $D$. Now let $E_{i}'$
be the diagonal intersection $\triangle_{\beta<\lambda_{i+1}}E_{A_{<\beta}}=\set{\alpha<\lambda_{i+1}}{\alpha\in\bigcap_{\beta<\alpha}E_{A_{<\beta}}}$
and let $E_{i}=\set{\alpha\in E_{i}'}{\omega\alpha=\alpha}$. Now,
if $\alpha\in E_{i}$, then $\bar{a}_{\alpha}$ is indiscernible over
$A_{<\beta}$ for all $\beta<\alpha$, and since $\alpha$ is a limit
(as $\omega\alpha=\alpha$), $\bar{a}_{\alpha}$ is indiscernible
over $\bigcup\set{A_{<\beta}}{\beta<\alpha}=A_{<\alpha}$. This takes
care of the requirement that $\bar{a}_{\alpha}$ is indiscernible
over $A_{<\alpha}$. The other requirements follow since $\set{\alpha<\lambda_{i+1}}{A_{<\alpha}\prec\C_{1}}$,
$\set{\alpha<\lambda_{i+1}}{\forall\beta<\alpha\left(\bar{a}_{\beta}\subseteq A_{<\alpha}\right)}$
and $\set{\alpha<\lambda_{i+1}}{C\cap A_{i}\subseteq A_{<\alpha}}$
are clubs. 

Let $E=\bigcup\set{E_{i}}{i<\mu}$. Let $\Gamma\left(x\right)$ be
the set of formulas saying that $p\left(x\right)$ holds and that
for all $\alpha\in E$, $\bar{a}_{\alpha}$ is $L_{1}$-indiscernible
over $A_{<\alpha}\cup x$.
\begin{claim}
It is enough to show that $\Gamma\left(x\right)$ is consistent. 
\end{claim}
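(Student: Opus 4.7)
The plan is to show that a realization of $\Gamma(x)$ is exactly what is needed to construct the desired witness. Assume $d\models\Gamma(x)$ and set $B_i:=\Sk(A_i\cup\{d\})$ in $T_1$ for each $i<\mu$, with $B:=\bigcup_{i<\mu}B_i$. I would then verify the three clauses of Definition \ref{def:the class C} for $\bar{B}=\sequence{B_i}{i<\mu}$. Since $T_1$ has built-in Skolem functions, each $B_i\prec\C_1$. Cardinality is controlled by $|B_i|\leq|A_i|+|T_1|\leq\lambda_i$, and the inclusions $I_i\subseteq A_i\subseteq B_i$ are immediate, giving $\bar{A}\leq\bar{B}$. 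Continuity of $\bar{B}$ at a limit $i<\mu$ follows from continuity of $\bar{A}$ together with the fact that Skolem hulls commute with directed unions: $\bigcup_{j<i}\Sk(A_j\cup\{d\})=\Sk\left(\bigcup_{j<i}A_j\cup\{d\}\right)=\Sk(A_i\cup\{d\})=B_i$. Finally, since $d\in B_0\subseteq B$ and $d\models p$, the realization condition holds.

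The only non-trivial clause is condition (3). Given a finite tuple $c$ from $B_{i+1}$, I would write $c=t(a,d)$ for a Skolem term $t$ and a finite tuple $a$ from $A_{i+1}$. The set $\{\alpha<\lambda_{i+1}:a\subseteq A_{<\alpha}\}$ is a club in the regular cardinal $\lambda_{i+1}$ (using finiteness of $a$ and the enumeration $\sequence{b_\alpha}{\alpha<\lambda_{i+1}}$ of $A_{i+1}$), so its intersection $E'$ with $E_{i+1}$ is again a club in $\lambda_{i+1}$. For $\alpha\in E'\subseteq E$, the indiscernibility axiom built into $\Gamma$ gives that $\bar{a}_\alpha$ is $L_1$-indiscernible over $A_{<\alpha}\cup\{d\}\supseteq a\cup\{d\}$; since indiscernibility over a set is preserved under applying any fixed tuple of $L_1$-terms to parameters from that set, we obtain indiscernibility over $c=t(a,d)$, which is exactly (3) for $\bar{B}$.

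Since none of these checks uses more than the definition of $\cal C$ and the elementary behaviour of Skolem hulls, the real content of the Main Lemma is indeed packaged into the consistency of $\Gamma(x)$. The main obstacle is therefore the consistency argument itself, which I expect to require the full strength of the set-theoretic hypotheses $\kappa^+=2^\kappa$ and $\square_\kappa$, together with simplicity of $T$ and the reduction to $p$ not forking over $A_*=C\cap A_0$, in order to assemble a coherent extension of $p$ compatible with the indiscernibility demands at every $\alpha\in E$.
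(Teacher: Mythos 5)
Your proof is correct and follows essentially the same route as the paper: take a realization $d$ of $\Gamma(x)$, form the Skolem hull of $A_i \cup \{d\}$, and verify membership in $\mathcal{C}$ using the fact that $L_1$-indiscernibility over a set propagates to its $L_1$-definable closure. The paper works with $B_i' = A_i \cup \{d\}$ first and passes to Skolem hulls only at the very end (the phrase ``Since the indiscernibility was in $L_1$'' is doing precisely the work you spell out via the decomposition $c = t(a,d)$), so your version is just more explicit. One small index slip: when verifying Condition (3) for $c \in B_{i+1}$, the relevant club is $E_i$, which is a subset of $\lambda_{i+1}$ by construction, not $E_{i+1} \subseteq \lambda_{i+2}$; but this is cosmetic and the intended argument is clear.
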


\begin{proof}
Let $d\models\Gamma\left(x\right)$. Let $B_{i}'=A_{i}\cup d$ for
all $i<\mu$. Note that for each $i<\mu$ there is a club $E_{i}\subseteq\lambda_{i+1}$
such that if $\alpha\in E_{i}$ then $\bar{a}_{\alpha}$ is indiscernible
over $A_{<\alpha}d$. Now let $c\subseteq B_{i}'$ be any finite set.
Then $c\subseteq A_{<\alpha}d$ for some $\alpha<\lambda_{i+1}$,
so $E'=E_{i}\cap\left[\alpha+1,\lambda_{i}\right)$ is such that for
any $\alpha\in E'$, $\bar{a}_{\alpha}$ is indiscernible over $c$.
Finally, let $B_{i}$ be $\Sk\left(B_{i}'\right)$. Since the indiscernibility
was in $L_{1}$, $\bar{B}=\sequence{B_{i}}{i<\mu}\in\cal C$.
\end{proof}
So fix finite sets $F_{0}\subseteq E$, $v\subseteq\kappa$, and a
finite set of $L_{1}$-formulas $\Delta$, and let $\Gamma_{F_{0},v,\Delta}\left(x\right)$
say that $p\left(x\right)$ holds and for all $\alpha\in F_{0}$,
$\bar{a}_{\alpha}$ is $\Delta$-indiscernible over $\set{b_{\beta}}{\beta\in v\cap\alpha}\cup\left\{ x\right\} $,
and we want to show that $\Gamma_{F_{0},v,\Delta}$ is consistent. 

Let $n=\left|F_{0}\right|$, and write $F_{0}=\set{\alpha_{i}}{i<n}$
where $\alpha_{0}<\ldots<\alpha_{n-1}$. Let $\cal T$ be the tree
of all functions $\left(\left[\omega\right]^{\aleph_{0}}\right)^{\leq n}$
($\left[\omega\right]^{\aleph_{0}}$ is the set of countably infinite
subsets of $\omega$). For every $i<n$ and every infinite $s\subseteq\omega$,
let $f_{i,s}$ be a partial elementary map taking $\bar{a}_{\alpha_{i}}$
to $\bar{a}_{\alpha_{i}}\restriction s$ fixing $A_{<\alpha_{i}}$
(i.e., mapping $a_{\omega\alpha_{i}+k}$ to $a_{\omega\alpha_{i}+k'}$
where $k'$ is the $k$-th element in $s$). Note that $A_{*}=A_{0}\cap C$
is fixed by all the $f_{i,s}$ since $A_{*}\subseteq A_{<\alpha}$
for every $\alpha\in E$ (by the last requirement on $E_{i}$ in the
bullet above). 
\begin{claim}
To show that $\Gamma_{F_{0},v,\Delta}$ is consistent, it is enough
to prove the following: 
\end{claim}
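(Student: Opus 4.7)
The plan is a compactness-based reformulation. Since each $\bar{a}_{\alpha_i}$ is already $L_1$-indiscernible over $A_{<\alpha_i}$, which contains $\{b_\beta : \beta \in v \cap \alpha_i\}$, the extra demand made by $\Gamma_{F_0, v, \Delta}$ is that the $\Delta$-type of each finite subtuple of $\bar{a}_{\alpha_i}$ over $\{b_\beta : \beta \in v \cap \alpha_i\} \cup \{x\}$ is insensitive to \emph{which} subtuple one chooses; equivalently, that the same $\Delta$-type is realized by every subsequence $\bar{a}_{\alpha_i} \restriction s$ for $s \in \left[\omega\right]^{\aleph_0}$. The partial elementary maps $f_{i,s}$ are engineered for precisely this purpose: demanding that each $f_{i,s}$ extend to a partial elementary map over $\{b_\beta : \beta \in v \cap \alpha_i\} \cup \{d\}$, for $d \models p$, is equivalent to $\bar{a}_{\alpha_i}$ being $\Delta$-indiscernible over $\{b_\beta : \beta \in v \cap \alpha_i\} \cup \{d\}$.

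My next step would be to localize the invariance to a finite subtree of $\cal T$. A finite approximation to $\Gamma_{F_0, v, \Delta}$ constrains only finitely many positions within each $\bar{a}_{\alpha_i}$ and finitely many formulas from $\Delta$, so the corresponding invariance under the $f_{i,s}$ can be packaged as a statement about finitely many branches of $\cal T$, composed bottom-up as $f_\eta = f_{n-1, \eta(n-1)} \circ \cdots \circ f_{0, \eta(0)}$ along $\eta \in \left(\left[\omega\right]^{\aleph_0}\right)^n$. The ensuing target claim would then state that a tree-indexed partial type, built from $p$ together with the $f_\eta$-substituted copies of the $\Delta$-type of $d$ over the relevant parameters, is consistent for every finite sub-configuration; such consistency transfers back to $\Gamma_{F_0, v, \Delta}$ by compactness and by the construction of the $f_{i,s}$.

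The main obstacle I expect is that the substitutions at the various levels are nested rather than independent: $f_{i,s}$ is only meaningful once the base $A_{<\alpha_i}$, and hence the images of the earlier $f_{j,t}$ for $j<i$, is in place. This is why $\cal T$ is a tree of sequences of length $\leq n$ rather than a product over the $n$ indices, and any proof of the reduction must verify that the composed maps $f_\eta$ remain partial elementary and continue to fix $A_* = A_0 \cap C$; this latter point holds because each individual $f_{i,s}$ fixes $A_{<\alpha_i} \supseteq A_*$, using the clause in the definition of $E_i$ that guarantees $C \cap A_i \subseteq A_{<\alpha}$ for $\alpha \in E_i$. Once this compositional compatibility is in place, the reduction itself is a straightforward compactness argument: consistency at each finite subtree assembles, in the limit, into a single $d \models p$ over which every $\bar{a}_{\alpha_i}$ is $\Delta$-indiscernible over $\{b_\beta : \beta \in v \cap \alpha_i\}$, which is exactly what $\Gamma_{F_0, v, \Delta}$ asserts.
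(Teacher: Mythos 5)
The proposal misses the central mechanism of the paper's proof, which is Ramsey's theorem. The paper starts with a realization $d_{*}\models\Theta\left(x\right)$ and constructs a single branch $\eta\in\mathcal{T}$ of height $n$ inductively: at level $i$, one applies Ramsey to the translated sequence $\tau_{\eta\restriction i}\left(\bar{a}_{\alpha_{i}}\right)$, which is still $L_{1}$-indiscernible over the translated parameters $\tau_{\eta\restriction i}\left(\set{b_{\beta}}{\beta\in v\cap\alpha_{i}}\right)$, to extract an infinite $s_{i}$ making the corresponding subsequence $\Delta$-indiscernible over those parameters together with $d_{*}$; setting $\eta\left(i\right)=s_{i}$ ensures $\tau_{\eta\restriction i+1}\left(\bar{a}_{\alpha_{i}}\right)$ is $\Delta$-indiscernible over the right set plus $d_{*}$. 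After the branch $\eta$ is chosen, the realization of $\Gamma_{F_{0},v,\Delta}$ is simply $d=\tau_{\eta}^{-1}\left(d_{*}\right)$: this realizes $p$ because $d_{*}$ realizes $\tau_{\eta}\left(p\right)\subseteq\Theta$, and the $\Delta$-indiscernibility pulls back to $\bar{a}_{\alpha_{i}}$ over $\set{b_{\beta}}{\beta\in v\cap\alpha_{i}}\cup\left\{ d\right\}$ because $\sigma_{\eta\restriction j}$ for $j>i+1$ fixes $A_{<\alpha_{i+1}}\supseteq\bar{a}_{\alpha_{i}}$ (this uses the clause $\bar{a}_{\beta}\subseteq A_{<\alpha}$ for $\beta<\alpha$, $\alpha\in E$). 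None of this appears in your proposal.

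Your ``compactness-based reformulation'' does not address the actual gap. The tree $\mathcal{T}=\left(\left[\omega\right]^{\aleph_{0}}\right)^{\leq n}$ is uncountable and there is no finite-subtree localization of $\Theta\left(x\right)$; more to the point, even given a $d_{*}$ realizing all of $\Theta$, it is not automatic that \emph{some} $\bar{a}_{\alpha_{i}}$ (or some translate thereof) is $\Delta$-indiscernible over parameters plus $d_{*}$ --- extracting that is exactly what Ramsey is for, and compactness is not a substitute. Two smaller issues worth flagging: you have the composition order reversed (the paper's $\tau_{\eta}=\sigma_{\eta\restriction1}\circ\cdots\circ\sigma_{\eta\restriction i+1}$ applies the level-$0$ automorphism \emph{outermost}), and your claim that ``$f_{i,s}$ extending to a partial elementary map over $\ldots\cup\left\{ d\right\}$'' is ``equivalent to $\bar{a}_{\alpha_{i}}$ being $\Delta$-indiscernible over $\ldots\cup\left\{ d\right\}$'' conflates full $L_{1}$-elementarity with $\Delta$-indiscernibility, which are different conditions. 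Your observation that the composed maps fix $A_{*}=A_{0}\cap C$, and the reason why, is correct --- but that is a bookkeeping detail, not the substance of the reduction.
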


\begin{itemize}
\item [$(\dagger)$]There is an assignment, assigning each $\eta\in\cal T$
of height $i+1\leq n$ an automorphism $\sigma_{\eta}$ of $\C_{1}$
extending $f_{i,\eta\left(i\right)}$ in such a way that, letting
$\tau_{\eta}=\sigma_{\eta\restriction1}\circ\cdots\circ\sigma_{\eta\restriction i+1}$,
$\Theta\left(x\right)=\bigcup\set{\tau_{\eta}\left(p\right)}{\eta\in\cal T}$
is consistent.
\end{itemize}
\begin{proof}
Suppose $d_{*}\models\Theta\left(x\right)$. By Ramsey, there is some
infinite $s_{0}\subseteq\omega$ such that $f_{0,s_{0}}\left(\bar{a}_{\alpha_{0}}\right)=\bar{a}_{\alpha_{0}}\restriction s_{0}$
is $\Delta$-indiscernible over $\set{b_{\beta}}{\beta\in v\cap\alpha_{0}}\cup\left\{ d_{*}\right\} $.
Let $\eta\left(0\right)=s_{0}$. Suppose we chose $\eta\restriction i$
for some $1\leq i<n$. Again by Ramsey there is some infinite $s_{i}\subseteq\omega$
such that, letting $\eta\left(i\right)=s_{i}$ we have that 
\[
\sigma_{\eta\restriction1}\circ\cdots\circ\sigma_{\eta\restriction i+1}\left(\bar{a}_{\alpha_{i}}\right)=\sigma_{\eta\restriction1}\circ\cdots\circ\sigma_{\eta\restriction i}\left(\bar{a}_{\alpha_{i}}\restriction s_{i}\right)=\left[\sigma_{\eta\restriction1}\circ\cdots\circ\sigma_{\eta\restriction i}\left(\bar{a}_{\alpha_{i}}\right)\right]\upharpoonright s_{i}
\]
 is $\Delta$-indiscernible over $\sigma_{\eta\restriction1}\circ\cdots\circ\sigma_{\eta\restriction i}\left(\set{b_{\beta}}{\beta\in v\cap\alpha_{i}}\right)\cup\left\{ d_{*}\right\} $.
This procedure defines $\eta\in\cal T$ of height $n$. Then $\tau_{\eta}^{-1}\left(d_{*}\right)\models p$
and $\bar{a}_{\alpha}$ is $\Delta$-indiscernible over $\left\{ \tau_{\eta}^{-1}\left(d_{*}\right)\right\} \cup\set{b_{\beta}}{\beta\in v\cap\alpha}$
for each $\alpha\in F_{0}$ as we wanted. Indeed,\textcolor{blue}{{}
}$\tau_{\eta}^{-1}\left(d_{*}\right)\models p$ obviously. Also, for
each $i<n$, $\tau_{\eta}^{-1}\left(\sigma_{\eta\restriction1}\circ\cdots\circ\sigma_{\eta\restriction i+1}\left(\bar{a}_{\alpha_{i}}\right)\right)=\sigma_{\eta\restriction n}^{-1}\circ\cdots\circ\sigma_{\eta\restriction1}^{-1}\left(\sigma_{\eta\restriction1}\circ\cdots\circ\sigma_{\eta\restriction i+1}\left(\bar{a}_{\alpha_{i}}\right)\right)=\bar{a}_{\alpha_{i}}$
because for $j>i+1$, $\sigma_{\eta\restriction j}$ fixes $A_{<\alpha_{i+1}}$
which contains $\bar{a}_{\alpha_{i}}$. Hence $\bar{a}_{\alpha_{i}}$
is indiscernible over the union of $\sigma_{\eta}^{-1}\left(d_{*}\right)$
and $\tau_{\eta}^{-1}\circ\sigma_{\eta\restriction1}\circ\cdots\circ\sigma_{\eta\restriction i}\set{b_{\beta}}{\beta\in v\cap\alpha_{i}}$
which is just $\set{b_{\beta}}{\beta\in v\cap\alpha_{i}}$ because
$\sigma_{\eta\restriction j}$ fixes $A_{<\alpha_{i+1}}$ for $j>i$. 
\end{proof}
Fix some enumeration of $\cal T$, $\sequence{\eta_{\varepsilon}}{\varepsilon<\varepsilon_{*}}$
such that $\eta_{0}=\emptyset$, $\varepsilon_{*}$ is a limit and
if $\eta_{\varepsilon}\trianglelefteq\eta_{\zeta}$ then $\varepsilon\leq\zeta$.

Let $\sigma_{0}=\tau_{0}=\id$ and for $0<\varepsilon<\varepsilon_{*}$
define $\sigma_{\varepsilon}=\sigma_{\eta_{\varepsilon}}$ as in $\left(\dagger\right)$
and consequently $\tau_{\varepsilon}=\tau_{\eta_{\varepsilon}}=\sigma_{\eta_{\varepsilon}\restriction1}\circ\cdots\circ\sigma_{\eta_{\varepsilon}\restriction\left|\eta_{\varepsilon}\right|}$
by induction on $\varepsilon$, in such a way that:
\begin{itemize}
\item [$(\dagger_{\epsilon})$] $\Theta_{\varepsilon}\left(x\right)=\bigcup\set{\tau_{\zeta}\left(p\right)}{\zeta<\varepsilon}$
is consistent and does not fork over $A_{*}$.
\end{itemize}
This will obviously suffice in order to prove $\left(\dagger\right)$,
and holds trivially for $\varepsilon=0$ by choice of $A_{*}$. 

Suppose we have already defined $\sigma_{\zeta}$ for all $0\leq\zeta<\varepsilon$,
and let $\eta=\eta_{\varepsilon}$. By assumption on the order, we
already defined $\sigma_{\eta'}$ for the predecessor $\eta'$ of
$\eta$ (recall that $\eta\neq\emptyset$ because $0<\varepsilon$).
Assume $i+1=\left|\eta\right|\leq n$. First, let $\sigma$ be any
automorphism extending $f_{i,\eta\left(i\right)}$ and let $q$ be
a global coheir extending $\tp\left(\tau_{\eta'}\left(\sigma\left(C\right)\right)/N\right)$
where $N=\tau_{\eta'}\circ\sigma\left(\Sk\left(A_{<_{\alpha_{i}}}\bar{a}_{\alpha_{i}}\right)\right)$.
Now let $C'\models q|_{N\tau_{<\varepsilon}\left(C\right)}$ where
$\tau_{<\varepsilon}\left(C\right)=\bigcup\set{\tau_{\zeta}\left(C\right)}{\zeta<\varepsilon}$.
Note that 
\[
\sigma\left(C\right)\sigma\left(\Sk\left(A_{<\alpha_{i}}\bar{a}_{\alpha_{i}}\right)\right)\equiv\tau_{\eta'}\left(\sigma\left(C\right)\right)N\equiv C'N\equiv\tau_{\eta'}^{-1}\left(C'\right)\sigma\left(\Sk\left(A_{<\alpha_{i}}a_{\alpha_{i}}\right)\right).
\]
Let $\sigma'$ be an automorphism mapping $\sigma\left(C\right)$
to $\tau_{\eta'}^{-1}\left(C'\right)$ fixing $\sigma\left(\Sk\left(A_{<_{\alpha_{i}}}\bar{a}_{\alpha_{i}}\right)\right)$.
Now let $\sigma_{\eta}=\sigma'\circ\sigma$. By construction, $\sigma_{\eta}$
 extends $f_{i,\eta\left(i\right)}$. Note that $C'=\tau_{\eta}\left(C\right)\ind_{N}^{u}\tau_{<\varepsilon}\left(C\right)$
(where $\ind^{u}$ means co-heir independence) and that $N=\tau_{\eta}\left(\Sk\left(A_{<_{\alpha_{i}}}\bar{a}_{\alpha_{i}}\right)\right)$
(because $\sigma'\circ\sigma\left(\Sk\left(A_{<_{\alpha_{i}}}\bar{a}_{\alpha_{i}}\right)\right)=\sigma\left(\Sk\left(A_{<_{\alpha_{i}}}\bar{a}_{\alpha_{i}}\right)\right)$). 

Now we check that $\left(\dagger\right)_{\varepsilon+1}$ holds. 

By $\left(\star\right)$ above, we know that $C\ind_{A_{<\alpha_{i}}}\Sk\left(A_{<\alpha_{i}}\bar{a}_{\alpha_{i}}\right)$
since $A_{<\alpha_{i}}\bar{a}_{\alpha_{i}}$ is contained in the appropriate
$A_{i_{0}}$ (i.e., for an $i_{0}<\mu$ for which $\alpha_{i}\in E_{i_{0}}$)
which is a model, and $A_{<\alpha_{i}}$ contains $C\cap A_{i_{0}}$.
Hence, applying $\tau_{\eta}$, we get $\tau_{\eta}\left(C\right)\ind_{\tau_{\eta}\left(A_{<\alpha_{i}}\right)}N$.
By transitivity, we get that $\tau_{\eta}\left(C\right)\ind_{\tau_{\eta}\left(A_{<\alpha_{i}}\right)}\tau_{<\varepsilon}\left(C\right)$.
Note that $\sigma_{\eta}$ fixes $A_{<\alpha_{i}}$, so $\tau_{\eta}\left(A_{<\alpha_{i}}\right)=\tau_{\eta'}\left(A_{<\alpha_{i}}\right)$.

By induction, there is some $d\models\Theta_{\varepsilon}\left(x\right)$
such that $d\ind_{A_{*}}\tau_{<\varepsilon}\left(C\right)$. Let $d_{0}\equiv_{A_{*}\tau_{<\varepsilon}\left(C\right)}d$
be such that $d_{0}\ind_{A_{*}}\tau_{\eta'}\left(A_{<\alpha_{i}}\right)\tau_{<\varepsilon}\left(C\right)$.
Let $d_{1}=\tau_{\eta}\left(\tau_{\eta'}^{-1}\left(d_{0}\right)\right)$.
Since $\eta'$ comes before $\eta$ in the enumeration, we have that
$d_{1}\models\tau_{\eta}\left(\tau_{\eta'}^{-1}\left(\tau_{\eta'}\left(p\right)\right)\right)=\tau_{\eta}\left(p\right)$
and $d_{1}\ind_{A_{*}}\tau_{\eta}\left(A_{<\alpha_{i}}\right)\tau_{\eta}\left(C\right)$
(recall that $A_{*}$ is fixed by $\sigma_{\zeta}$ for all $\zeta\leq\varepsilon$).
Recalling that $\tau_{\eta}\left(A_{<\alpha_{i}}\right)=\tau_{\eta'}\left(A_{<\alpha_{i}}\right)$,
by base monotonicity we have that $d_{0}\ind_{\tau_{\eta'}\left(A_{<\alpha_{i}}\right)}\tau_{<\varepsilon}\left(C\right)$,
$d_{1}\ind_{\tau_{\eta'}\left(A_{<\alpha_{i}}\right)}\tau_{\eta}\left(C\right)$,
$\tau_{\eta}\left(C\right)\ind_{\tau_{\eta'}\left(A_{<\alpha_{i}}\right)}\tau_{<\varepsilon}\left(C\right)$
and $d_{0}\equiv_{\tau_{\eta'}\left(A_{<\alpha_{i}}\right)}d_{1}$
(the last equivalence is because $\tau_{\eta}\circ\tau_{\eta'}^{-1}$
fixes $\tau_{\eta'}\left(A_{<\alpha_{i}}\right)$, because $\tau_{\eta}$
and $\tau_{\eta'}$ agree on $A_{<\alpha_{i}}$, since $\sigma_{\eta}\restriction A_{<\alpha_{i}}=\id$).
By the independence theorem for simple theories (see \cite[Theorem 7.3.11]{TentZiegler}),
we can find some $d_{2}\equiv_{\tau_{\eta'}\left(A_{<\alpha_{i}}\right)\tau_{\eta}\left(C\right)}d_{1}$
(so $d_{2}\models\tau_{\eta}\left(p\right)$), $d_{2}\equiv_{\tau_{\eta'}\left(A_{<\alpha_{i}}\right)\tau_{<\varepsilon}\left(C\right)}d_{0}$
(so $d_{2}\models\Theta_{\varepsilon}\left(x\right)$) and $d_{2}\ind_{\tau_{\eta'}\left(A_{<\alpha_{i}}\right)}\tau_{\leq\varepsilon}\left(C\right)$.
Finally, since $d_{2}\ind_{A_{*}}\tau_{\eta'}\left(A_{<\alpha_{i}}\right)$,
by transitivity we have that $d_{2}\ind_{A_{*}}\bigcup\tau_{\leq\varepsilon}\left(C\right)$.
This finishes the proof of the lemma. 
\end{proof}
Now the proof of Theorem \ref{thm:Main Thm simple theories} continues
precisely as the proof of \cite[Theorem 3.3]{20-Kaplan2015} with
small changes.

First we recall the definition of $\square_{\kappa}$:
\begin{defn}
\label{def:square}(\emph{Jensen's Square principle}, \cite[Page 443]{JechSetTheory})
Let $\kappa$ be an uncountable cardinal; $\square_{\kappa}$ (square-$\kappa$)
is the following condition:

There exists a sequence $\sequence{C_{\alpha}}{\alpha\in\Lim\left(\kappa^{+}\right)}$
such that:
\begin{enumerate}
\item $C_{\alpha}$ is a closed unbounded subset of $\alpha$.
\item If $\beta\in\Lim\left(C_{\alpha}\right)$ then $C_{\beta}=C_{\alpha}\cap\beta$
(where for a set of ordinals $X$, $\Lim\left(X\right)$ is the set
of limit ordinals in $X$).
\item If $\cof\left(\alpha\right)<\kappa$, then $\left|C_{\alpha}\right|<\kappa$. 
\end{enumerate}
\end{defn}

\begin{rem}
\label{rem:Square'}Suppose that $\sequence{C_{\alpha}}{\alpha\in\Lim\left(\kappa^{+}\right)}$
witnesses $\square_{\kappa}$. Let $C_{\alpha}'=\Lim\left(C_{\alpha}\right)$.
Then the following holds for $\alpha\in\Lim\left(\kappa^{+}\right)$. 
\begin{enumerate}
\item If $C_{\alpha}'\neq\emptyset$, then either $\sup\left(C_{\alpha}'\right)=\alpha$,
or $C_{\alpha}'$ has a last element $<\alpha$ in which case $\cof\left(\alpha\right)=\omega$.
If $C_{\alpha}'=\emptyset$ then $\cof\left(\alpha\right)=\omega$
as well. 
\item $C_{\alpha}'\subseteq\Lim\left(\alpha\right)$ and for all $\beta\in C_{\alpha}'$,
$C_{\alpha}'\cap\beta=C_{\beta}'$. 
\item If $\cof\left(\alpha\right)<\kappa$, then $\left|C_{\alpha}'\right|<\kappa$.
\end{enumerate}
Define $\bar{A}\leq_{i}\bar{B}$ for $\bar{A},\bar{B}\in\cal C$ and
$i<\mu$ as $A_{j}\subseteq B_{j}$ for all $i\leq j$ (so $\mathordi{\leq}=\mathordi{\leq_{0}}$
on $\cal C$). Write $\bar{A}\leq_{*}\bar{B}$ for: there is some
$i<\mu$, such that $\bar{A}\leq_{i}\bar{B}$. 
\end{rem}

\begin{proof}[Proof of Theorem \ref{thm:Main Thm simple theories}]
 Let $\sequence{C_{\alpha}}{\alpha\in\Lim\left(\kappa^{+}\right)}$
be a sequence as in Remark \ref{rem:Square'}. Note that $\left|C_{\alpha}\right|<\kappa$
for all $\alpha<\kappa^{+}$ as $\kappa$ is singular. Let $\set{S_{\alpha}}{\alpha<\kappa^{+}}$
be a partition of $\kappa^{+}$ to sets of size $\kappa^{+}$. We
construct a sequence $\sequence{\left(\bar{A}_{\alpha},\bar{p}_{\alpha}\right)}{\alpha<\kappa^{+}}$
such that:
\begin{enumerate}
\item \label{enu:in M}$\bar{A}_{\alpha}=\sequence{A_{\alpha,i}}{i<\mu}\in\cal C$
(recall that $\mu=\cof\left(\kappa\right)$);
\item $\bar{p}_{\alpha}$ is an enumeration $\sequence{p_{\alpha,\beta}}{\beta\in S_{\alpha}\backslash\alpha}$
of all complete types over subsets of $\bigcup_{i}A_{\alpha,i}$ of
size $<\kappa$ (this uses $\kappa^{+}=2^{\kappa}$ and $\left|T\right|\leq\kappa$);
\item \label{enu:star}If $\beta<\alpha$ then $\bar{A}_{\beta}\leqstar\bar{A}_{\alpha}$; 
\item \label{enu:realizing types}If $\alpha\in S_{\gamma}$ and $\gamma\leq\alpha$,
then $\bar{A}_{\alpha+1}$ contains a realization of $p_{\gamma,\alpha}$;
\item \label{enu:limitsquare}If $\alpha$ is a limit ordinal, then for
all $i<\mu$ such that $\left|C_{\alpha}\right|<\lambda_{i}$ and
for all $\beta\in C_{\alpha}$, $\bar{A}_{\beta}\leq_{i}\bar{A}_{\alpha}$. 
\end{enumerate}
The construction is done almost precisely as in \cite[Proof of Theorem 3.3]{20-Kaplan2015},
but we explain. 

Put $A_{0,i}=\Sk\left(I_{i}\right)$. For $\alpha$ successor use
Main Lemma \ref{lem:Main lemma simple}. For $\alpha$ limit, we divide
into two cases. 
\begin{casenv}
\item $\sup\left(C_{\alpha}\right)=\alpha$. Let $i_{0}=\min\set{i<\mu}{\left|C_{\alpha}\right|<\lambda_{i}}$
(which is a successor). For $i<i_{0}$, let $A_{\alpha,i}=A_{0,i}$.
For $i\geq i_{0}$ successor, let $A_{\alpha,i}=\bigcup_{\beta\in C_{\alpha}}A_{\beta,i}$.
Note that $\left|A_{\alpha,i}\right|\leq\lambda_{i}$ for all $i<\mu$.
We have to show that $\bar{A}_{\alpha}$ satisfies (\ref{enu:in M}),
(\ref{enu:star}) and (\ref{enu:limitsquare}). The latter is by construction.

For (\ref{enu:in M}), suppose $s\subseteq A_{\alpha,i}$ is a finite
set where $i_{0}\leq i\in\Succ\left(\kappa\right)$. For every element
$e\in s$, there is some $\beta_{e}\in C_{\alpha}$ such that $e\in A_{\beta_{e},i}$.
Let $\beta=\max\set{\beta_{e}}{e\in s}$. Then $\beta$ is a limit
ordinal and $C_{\alpha}\cap\beta=C_{\beta}$. As $\left|C_{\beta}\right|<\lambda_{i_{0}}$,
it follows by the induction hypothesis that $s\subseteq A_{\beta,i}$.
Hence for some club $E$ of $\lambda_{i+1}$, $\bar{a}_{\alpha}$
is $L_{1}$-indiscernible over $s$ for all $\alpha\in E$. We also
have to check that $A_{\alpha,i}\prec\C_{1}$, but this is immediate
as $A_{\beta,i}\prec\C_{1}$ for all $\beta\in C_{\alpha}$ by induction. 

Lastly, (\ref{enu:star}) is easy by assumption of the case and transitivity
of $\leqstar$.
\item $\sup\left(C_{\alpha}\right)<\alpha$. If $C_{\alpha}=\emptyset$,
let $\gamma=0$ and otherwise let $\gamma=\max C_{\alpha}$ (recall
that it exists). Let $\sequence{\beta_{n}}{n<\omega}$ be a cofinal
increasing sequence in $\alpha$ starting with $\beta_{0}=\gamma$
(which exists since $\cof\left(\alpha\right)=\omega$). For every
$n<\omega$, there is some $i_{n}<\mu$ such that $\bar{A}_{\beta_{n}}\leq_{i_{n}}\bar{A}_{\beta_{n+1}}$.
Without loss of generality assume that $i_{n}<i_{n+1}$ for all $n<\omega$.
Letting $i_{-1}=0$, for all successor $i\geq i_{n-1}$ such that
$i<i_{n}$ define $A_{\alpha,i}=A_{\beta_{n},i}$, and for all successor
$i\geq\sup\set{i_{n}}{n<\omega}$, let $A_{\alpha,i}=\bigcup\set{A_{\beta_{n},i}}{n<\omega}$.
Note that $\bar{A}_{\beta_{n}}\leq_{i_{n-1}}\bar{A}_{\alpha}$ for
all $n<\omega$. This easily satisfies all the requirements. For example
(\ref{enu:limitsquare}): if $C_{\alpha}=\emptyset$, then there is
nothing to check, so assume $C_{\alpha}\neq\emptyset$. Let $i<\mu$
be such that $\left|C_{\alpha}\right|<\lambda_{i}$ and fix some $\beta\in C_{\alpha}$.
Hence $\beta\leq\gamma=\max C_{\alpha}$. Note that $\bar{A}_{\gamma}\leq\bar{A}_{\alpha}$
(so also $\bar{A}_{\gamma}\leq_{i}\bar{A}_{\alpha}$), so we may assume
$\beta<\gamma$. In this case, since $C_{\gamma}=C_{\alpha}\cap\gamma$,
$\beta\in C_{\gamma}$, and since $\left|C_{\gamma}\right|=\left|C_{\alpha}\cap\gamma\right|<\lambda_{i}$,
by induction it follows that $\bar{A}_{\beta}\leq_{i}\bar{A}_{\gamma}\leq\bar{A}_{\alpha}$
so we are done. 
\end{casenv}
Finally, let $M=\bigcup_{\alpha<\kappa^{+},i<\mu}A_{\alpha,i}$. Then
$M$ is a $\kappa$-saturated model of $T$ by (\ref{enu:realizing types}).
However, it is not $\kappa^{+}$-locally saturated because the local
type $\set{\varphi\left(x,a_{j}\right)}{j\in I\mbox{ even}}\cup\set{\neg\varphi\left(x,a_{j}\right)}{j\in I\mbox{ odd}}$
is not realized in $M$. To see this, suppose towards contradiction
that $b$ realizes it. Since $\bar{A}_{\alpha}$ is an increasing
continuous sequence for all $\alpha<\mu^{+}$, there must be some
$\alpha<\mu^{+}$ and $i\in\Succ\left(\kappa\right)$ such that $b\in A_{\alpha,i}$.
But then by (\ref{enu:in M}), for some $\alpha<\lambda_{i}$, $\bar{a}_{\alpha}$
is indiscernible over $b$ --- contradiction. 
\end{proof}

\subsection{Supersimple theories}

From the previous section it follows that if $T$ is countable, unstable
and supersimple ($\kappa\left(T\right)=\aleph_{0}$), and $\kappa$
is singular with cofinality $\omega$ such that $\kappa^{+}=2^{\kappa}$
and $\square_{\kappa}$ holds, then $T$ has PC-exact saturation at
$\kappa$. In this section we will show that this property identifies
supersimplicity among unstable theories: assuming that $T$ is not
supersimple, we will find an expansion $T_{1}$ (of the same size)
such that if $M\models T_{1}$ and $M\restriction L$ is $\kappa$-saturated,
then $M$ is $\kappa^{+}$-saturated. 
\begin{thm}
\label{thm:not supersimple -> singular comp in cof omega}Suppose
that $T$ is unstable and not supersimple, and that $\kappa$ is singular
with $\cof\left(\kappa\right)=\omega$ (in particular uncountable)
and $\kappa\geq\left|T\right|$. Then there is an expansion $T_{1}\supseteq T$
with $\left|T_{1}\right|=\left|T\right|$ and such that if $M\models T_{1}$
is such that $M\restriction L$ is $\kappa$-saturated, then $M\restriction L$
is $\kappa^{+}$-saturated. 
\end{thm}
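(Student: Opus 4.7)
The strategy is to adapt the construction of Subsection \ref{subsec:Description-of-the expansion}, which was designed precisely for non-supersimple $T$ via Fact \ref{fact:not supersimple -> 2 incon}. Taking $T_1$ to be (an enrichment of) the expansion from that subsection gives a countable extension of $L$, with the sorts $\mathcal{N}, \mathcal{T}$, the bijection $e\colon \mathcal{N}\to\mathcal{M}$, and the coding function $H$ from Lemma \ref{lem:function coding}. In particular $|T_1|=|T|$, and whenever $M\models T_1$ has an $\aleph_1$-saturated $L$-reduct, $H$ codes arbitrary $\omega$-sequences of elements of $\mathcal{N}^M$.

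Assume $M\models T_1$ with $M\upharpoonright L$ being $\kappa$-saturated; we must show that $M\upharpoonright L$ is $\kappa^+$-saturated. Given $p\in S_L(A)$ with $|A|=\kappa$, use $\cof(\kappa)=\omega$ to write $\kappa=\sup_n \kappa_n$ with $\kappa_n<\kappa$ and correspondingly $A=\bigcup_n A_n$ with $|A_n|\le \kappa_n<\kappa$. By $\kappa$-saturation each restriction $p\restriction A_n$ is realized by some $c_n\in M$, and Lemma \ref{lem:function coding} then yields an element $m\in\mathcal{N}^M$ with $H(m,n)=e^{-1}(c_n)$ for all standard $n$.

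The next step is to extract from this coded sequence a single realization of $p$ via an overspill argument inside $\mathcal{N}^M$, combined with the $\psi_n$-tree structure. The expansion is further enriched, if necessary, so that the statement ``$H(m,n)$ realizes the piece of $p$ determined by an initial segment of some enumeration of $A$'' is expressible as an $L_1$-formula $\varphi(m,n)$; the set $\{n\in\mathcal{N}^M:\varphi(m,n)\}$ then contains all standard $n$, so overspill produces a nonstandard $n^*\in\mathcal{N}^M$ at which $\varphi(m,n^*)$ still holds. The image $e(H(m,n^*))$ then satisfies every standard restriction of $p$ and hence realizes $p$. The main obstacle I expect is packaging $A$ and $p$ inside $T_1$ --- concretely, enriching the expansion from Subsection \ref{subsec:Description-of-the expansion} with an $\mathcal{N}$-indexed enumeration of $A$ and with truth-predicate-style symbols recording which formulas in $p$ are satisfied by the coded elements $H(m,n)$ at each level --- so that the overspill argument has a genuine $L_1$-formula $\varphi(m,n)$ to act on. Once this is in place, the remaining details follow the truth-predicate/overspill pattern of the Case 1 argument in the proof of Theorem \ref{thm:PC exact saturation for stable theories}.
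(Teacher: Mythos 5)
Your opening moves match the paper exactly: take an expansion of the Section~\ref{subsec:Description-of-the expansion} type (the paper adds a couple of symbols, most importantly a map $k:\mathcal{N}\to\mathcal{M}$ recording the order-property sequence $\langle b_n\rangle$, which you do not mention), decompose $A = \bigcup_{n<\omega}A_n$ using $\cof(\kappa)=\omega$, realize each $p\restriction A_n$ by some $c_n$ via $\kappa$-saturation, and code $\langle e^{-1}(c_n)\rangle$ by an element $m$ via $H$. The gap is in the final extraction step, and it is a real one.

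You want an $L_1$-formula $\varphi(m,n)$ expressing ``$H(m,n)$ realizes the piece of $p$ determined by an initial segment of an enumeration of $A$,'' obtained by further enriching $T_1$ with an $\mathcal{N}$-indexed enumeration of $A$ and truth-predicate symbols recording which formulas of $p$ hold. This cannot be done: $T_1$ must be fixed once and for all (of size $|T|$, before $M$, $A$, or $p$ are given), while $A$ is an arbitrary subset of size $\kappa$ of an arbitrary model and $p$ an arbitrary type over it, so neither can be packaged inside $T_1$. Consequently there is no single formula to overspill on. The paper works formula-by-formula: for each $\chi(x)\in p$ (with its parameter from $A\subseteq M$), the convex set $D_\chi=\{i\in\mathcal{N}^M : \forall j\in[k_\chi,i]\ M\models\chi(e(H(m,j)))\}$ is $L_1(M)$-definable and contains a tail of $\omega$, so overspill gives a nonstandard $d_\chi\in D_\chi$. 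But now you have $\kappa$-many nonstandard bounds $d_\chi$, and to realize $p$ you need a single nonstandard $g$ lying below all of them. Since $\kappa$ is singular, the set $\{d_\chi\}$ has coinitiality $<\kappa$, so it suffices to get below a coinitial subset $C'$ of size $<\kappa$. But the type $\{x\in\mathcal{N}\}\cup\{n<x : n<\omega\}\cup\{x\le d : d\in C'\}$ is of size possibly much larger than $\aleph_0$, so this is \emph{not} a further overspill --- and this is exactly where instability enters essentially. The paper realizes the type $\Pi=\{\varphi(k(i),y):i<\omega\}\cup\{\neg\varphi(k(d),y):d\in C'\}$ (size $<\kappa$, consistent by the order property since the $d$'s are nonstandard) by $\kappa$-saturation, and then $g=\min\{n\in\mathcal{N}^M:\neg\varphi(k(n),f)\}$ is the required nonstandard lower bound. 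Your sketch omits both the per-formula overspill and the order-property trick, and the ``single overspill on one formula $\varphi(m,n)$'' it replaces them with is not available.
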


\begin{proof}
If $T$ is countable we can use essentially the same expansion $T_{1}$
as in Section \ref{subsec:Description-of-the expansion} (adding to
it the function $k$ from below), but since we do not assume that
$T$ is countable we give details. Let $\lambda=\left|T\right|+\aleph_{0}$.
As $T$ is not supersimple, we can use Fact \ref{fact:not supersimple -> 2 incon}
and compactness to find a sequence of formulas $\sequence{\psi_{n}\left(x,y_{n}\right)}{n<\omega}$
and a tree $\sequence{a_{\eta}}{\eta\in\lambda^{<\omega}}$ as there.
In addition, since $T$ is not stable, there is a formula $\varphi\left(x,y\right)$
where $x$ is a singleton and a sequence $\sequence{b_{n},c_{n}}{n<\omega}$
such that $\varphi\left(b_{n},c_{m}\right)$ holds iff $n<m$. 

Let $M_{0}\models T$ be of size $\lambda$ containing $\sequence{a_{\eta}}{\eta\in\lambda^{<\omega}}$
and $\sequence{b_{n}}{n<\omega}$. Without loss of generality we may
assume that the universe of $M_{0}$ is $\lambda\cup\lambda^{<\omega}\cup L$
where $L$ is the set of all formulas from $L$ in a fixed countable
set of variables $\set{v_{i}}{i<\omega}$. We put a predicate $\cal K$
for $\lambda$ and a predicate $\cal T$ for $\lambda^{<\omega}$
on which we add the order $\unlhd$. We put a predicate $\cal N$
on $\omega\subseteq\lambda$ and enrich it with $+,\cdot,<$. Add
two functions $l:\mathcal{T}\to\mathcal{N}$ and $\mathrm{eval}:\mathcal{T}\times\mathcal{N}\to\mathcal{K}$
as in Section \ref{subsec:Description-of-the expansion}: $\mathrm{eval}\left(\eta,n\right)=\eta\left(n\right)$
for $n<l\left(\eta\right)$ and otherwise $\mathrm{eval}\left(\eta,n\right)=0$.
Also add a bijection $e:\cal K\to M_{0}$. Add a predicate $\cal L$
for $L$ and a truth valuation $TV:\cal L\times\cal T\to\left\{ 0,1\right\} $
as in Section \ref{subsec:Description-of-the expansion} (this time
there is no need to add finite subsets of formulas). Add a function
$d:\cal N\to\cal L$ mapping $n$ to $\left\{ \psi_{n}\left(v_{0};v_{1},\ldots,v_{\left|y_{n}\right|}\right)\right\} $
and a function $a:\mathcal{T}\to\mathcal{T}$ such that if $\eta\in\lambda^{<\omega}$
then $a\left(\eta\right)\in\lambda^{\left|y_{l\left(\eta\right)}\right|}$,
$\set{TV\left(d\left(i\right),\left\langle x\right\rangle \concat a\left(\eta|i\right)\right)}{i<l\left(\eta\right)}$
is consistent and such that if $\eta,\nu\in\cal T$ are incomparable
then $\left\{ TV\left(d\left(l\left(\eta\right)\right),\left\langle x\right\rangle \concat a\left(\eta\right)\right),TV\left(d\left(l\left(\nu\right)\right),\left\langle x\right\rangle \concat a\left(\nu\right)\right)\right\} $
is inconsistent. Also add a map $k:\cal{\omega}\to M_{0}$ such that
$k\left(i\right)=b_{i}$. Let $M_{1}=\left(M_{0},\mathcal{N},\cal K,\mathcal{T},\mathcal{L}\right)$
with this additional structure, and set $T_{1}=\mathrm{Th}\left(M_{1}\right)$
and $L_{1}=L\left(T_{1}\right)$.  In models of $T_{1}$, we will
denote by $\omega$ the (interpretations of the) standard natural
numbers, while elements in $\cal N$ which are not from $\omega$
are nonstandard. 
\begin{itemize}
\item [(*)] Note that Lemma \ref{lem:function coding} still holds, with
some minor adjustments to the proof (replacing $\cal N$ by $\cal K$),
giving us a definable function $H:\cal K\times\cal N\to\cal K$ such
that if $M\models T_{1}$ and $M\upharpoonright L$ is $\aleph_{1}$-saturated,
then for any function $f:\omega\to\mathcal{K}^{M}$, there is some
$m_{f}\in\cal K^{M}$ with the property that $H^{M}\left(m_{f},n\right)=f\left(n\right)$
for all $n\in\omega$. 
\end{itemize}
Suppose that $M\models T_{1}$ and $M\restriction L$ is $\kappa$-saturated.
Note that $\kappa\geq\aleph_{1}$ and hence $M\restriction L$ is
$\aleph_{1}$-saturated. Let $p\left(x\right)\in S_{L}\left(A\right)$
be some complete type where $A\subseteq M$ is of size $\left|A\right|=\kappa$.
Since $\cof\left(\kappa\right)=\omega$ we can write $A$ as an increasing
union $A=\bigcup_{i<\omega}A_{i}$ where $\left|A_{i}\right|<\kappa$.
For $i<\omega$, let $c_{i}\models p|_{A_{i}}$. By ({*}), there is
some $m\in\cal K^{M}$ such that $H\left(m,i\right)=e^{-1}\left(c_{i}\right)$
for all $i<\omega$. For every $\varphi\left(x\right)\in p$, there
is $k_{\varphi}<\omega$ such that the set $D_{\varphi}=\set{i\in\cal N^{M}}{\forall j\in\left[k_{\varphi},i\right]M\models\varphi\left(e\left(H\left(m,j\right)\right)\right)}$
contains $\left[k_{\varphi},\omega\right)$. Note that $D_{\varphi}$
is convex, and that it is $A$-definable in $L_{1}$. By overspill,
$D_{\varphi}$ contains some nonstandard element $d_{\varphi}\in\cal N^{M}$.
Let $C=\set{d_{\varphi}}{\varphi\in p}\subseteq\cal N^{M}$. Note
that if $r\in\cal N^{M}$ is nonstandard and $r\leq d_{\varphi}$
then $r\in D_{\varphi}$, so that if $r\leq d_{\varphi}$ for all
$\varphi\in p$ then $e\left(H\left(m,r\right)\right)\models p$ so
that $p$ is realized. 

Since $\left|A\right|\leq\kappa$, $\left|C\right|\leq\kappa$, and
since $\kappa$ is singular, the cofinality of $C$ (going down) is
$<\kappa$. Let $C'\subseteq C$ be a coinitial subset of size $<\kappa$.
To conclude, we show that the set 
\[
\Gamma=\left\{ x\in\cal N\right\} \cup\set{n<x}{n<\omega}\cup\set{x\leq d}{d\in C'}
\]
 is realized in $M$. Recall the choice of $\varphi$ and the function
$k$ above, and consider the set $\Pi=\set{\varphi\left(k\left(i\right),y\right)}{i<\omega}\cup\set{\neg\varphi\left(k\left(d\right),y\right)}{d\in C'}$.
Then $\Pi$ is consistent by choice of $\varphi$, since all elements
in $C'$ are nonstandard. By $\kappa$-saturation $\Pi$ is realized,
say by $f\in M^{\left|y\right|}$. Let $g\in\cal N^{M}$ be minimal
such that $M\models\neg\varphi\left(k\left(g\right),f\right)$. Then
$g$ is nonstandard and $g\leq d$ for all $d\in C'$ (since $\neg\varphi\left(k\left(d\right),f\right)$
holds for all $d\in C'$), so $g\models\Gamma$ and we are done. 
\end{proof}
Combining Theorem \ref{thm:Main Thm simple theories} with Theorem
\ref{thm:not supersimple -> singular comp in cof omega} we get: 
\begin{cor}
\label{cor:supersimple unstable char}Let $T$ be unstable theory.
Suppose that $\kappa$ is singular with cofinality $\omega$ such
that $\left|T\right|<\kappa$, $\kappa^{+}=2^{\kappa}$ and $\square_{\kappa}$
holds. Then, $T$ is supersimple iff $T$ has PC-exact saturation
at $\kappa$. 
\end{cor}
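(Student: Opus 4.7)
The plan is to obtain the corollary as an immediate combination of Theorem \ref{thm:Main Thm simple theories} and Theorem \ref{thm:not supersimple -> singular comp in cof omega}, handling the two directions of the equivalence separately.

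For the forward direction, I would assume $T$ is (unstable and) supersimple, so $\kappa\left(T\right) = \aleph_{0}$. Since $\cof\left(\kappa\right) = \omega$, the parameter $\mu = \cof\left(\kappa\right)$ appearing in Theorem \ref{thm:Main Thm simple theories} satisfies $\kappa\left(T\right) \leq \mu$. The remaining set-theoretic hypotheses, namely $\left|T\right| < \kappa$, $\kappa^{+} = 2^{\kappa}$, and $\square_{\kappa}$, are assumed in the corollary and are precisely those needed to invoke Theorem \ref{thm:Main Thm simple theories}. Applying it gives that $T$ has PC-exact saturation at $\kappa$ (in fact with the stronger conclusion that the witnessing model fails to be locally $\kappa^{+}$-saturated).

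For the converse, I would argue contrapositively. Assuming $T$ is unstable and not supersimple, the hypotheses of Theorem \ref{thm:not supersimple -> singular comp in cof omega} (namely $\cof\left(\kappa\right) = \omega$, $\kappa \geq \left|T\right|$, and $T$ unstable not supersimple) are all satisfied, so the theorem produces an expansion $T_{1} \supseteq T$ with $\left|T_{1}\right| = \left|T\right|$ such that every $M \models T_{1}$ whose $L$-reduct is $\kappa$-saturated automatically has a $\kappa^{+}$-saturated $L$-reduct. This $T_{1}$ is a direct witness that $\PC{T_{1}}{T}$ has no exactly $\kappa$-saturated model, so $T$ fails to have PC-exact saturation at $\kappa$.

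There is no substantive obstacle: all the work is done in the two preceding theorems, and the corollary merely packages them. The only point requiring a moment of care is verifying that one may take $\mu = \omega$ when invoking Theorem \ref{thm:Main Thm simple theories} in the supersimple case, which is immediate from the fact that $\kappa\left(T\right) = \aleph_{0}$ for supersimple $T$.
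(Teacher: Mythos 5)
Your proposal is correct and matches the paper's intended argument exactly: the paper derives the corollary simply by "Combining Theorem \ref{thm:Main Thm simple theories} with Theorem \ref{thm:not supersimple -> singular comp in cof omega}," and your two directions carry out that combination in the straightforward way, with the key observation that supersimplicity gives $\kappa(T)=\aleph_{0}\leq\omega=\cof(\kappa)=\mu$ so the hypotheses of Theorem \ref{thm:Main Thm simple theories} are met.
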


\begin{cor}
\label{cor:countable supersimpmle char}Suppose that $T$ is a countable
theory. Suppose that $\kappa$ is singular with cofinality $\omega$
such that $2^{\aleph_{0}}<\kappa$, $\kappa^{+}=2^{\kappa}$ and $\square_{\kappa}$
holds. Then $T$ is supersimple iff $T$ has PC-exact saturation at
$\kappa$. 
\end{cor}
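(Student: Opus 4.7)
The plan is to reduce Corollary \ref{cor:countable supersimpmle char} to the two equivalences already established in the paper, by splitting on whether $T$ is stable or unstable. The unstable case is immediate from Corollary \ref{cor:supersimple unstable char}, since the hypotheses on $\kappa$ in that corollary are exactly a (weaker) subset of those imposed here (namely $\left|T\right|<\kappa$, $\cof(\kappa)=\omega$, $\kappa^+=2^\kappa$, and $\square_\kappa$). So in the unstable case, $T$ is supersimple iff $T$ has PC-exact saturation at $\kappa$, with nothing further to check.

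Now suppose $T$ is stable. I would apply Corollary \ref{cor:PC ES iff stable}, which (since $\kappa\geq 2^{\aleph_0}$ by hypothesis) says that $T$ has PC-exact saturation at $\kappa$ if and only if $T$ is stable in $\kappa$. For the forward direction of Corollary \ref{cor:countable supersimpmle char}, note that a supersimple stable theory is superstable, and a countable superstable theory is stable in every $\mu\geq 2^{\aleph_0}$; hence $T$ is stable in $\kappa$, and Corollary \ref{cor:PC ES iff stable} delivers PC-exact saturation. For the reverse direction, if $T$ has PC-exact saturation at $\kappa$ then $T$ is stable in $\kappa$; the key point is that, by K\"onig's lemma, $\kappa^{\aleph_0}>\kappa$ since $\cof(\kappa)=\omega$, so the stability spectrum of a countable strictly stable theory (which contains only $\mu$ with $\mu^{\aleph_0}=\mu$) excludes $\kappa$. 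Therefore $T$ must be superstable, hence supersimple.

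Combining the two cases completes the proof. There is no real obstacle here: the work has already been done in Corollaries \ref{cor:PC ES iff stable} and \ref{cor:supersimple unstable char}, and the only bookkeeping step is the observation that a countable strictly stable theory cannot be stable in a singular cardinal of countable cofinality, which ensures that ``stable in $\kappa$'' forces ``superstable'' under our hypotheses on $\kappa$.
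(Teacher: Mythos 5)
Your proof is correct and follows essentially the same route as the paper: split on stability, handle the unstable case via Corollary \ref{cor:supersimple unstable char}, and in the stable case invoke Corollary \ref{cor:PC ES iff stable} together with the stability spectrum theorem and the observation that $\cof(\kappa)=\omega$ rules out $\kappa^{\aleph_0}=\kappa$. Your explicit appeal to K\"onig's lemma makes a step clear that the paper leaves implicit, but the argument is otherwise the same.
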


\begin{proof}
If $T$ is unstable this follows from Corollary \ref{cor:supersimple unstable char}.
If $T$ is stable and supersimple then $T$ is superstable so $T$
is $\kappa$-stable and hence has PC-exact saturation by Coroallry
\ref{cor:PC ES iff stable}. On the other hand, if $T$ has PC-exact
saturation at $\kappa$ then $T$ is $\kappa$-stable. But if $T$
is not superstable then by the stability spectrum theorem \cite[III]{shelah1990classification},
$\kappa^{\aleph_{0}}=\kappa$, contradicting the cofinality assumption. 
\end{proof}

\section{\label{sec:On-local-exact saturation and cofinality omega}On local
exact saturation and cofinality $\omega$}

In this section we will see that for $\kappa$ of cofinality $\omega$,
having local PC-exact saturation defines the class of \emph{supershort}
simple theories: the class of theories for which every local type
does not fork over a finite set. Before doing that, we discuss stable
theories. 

\subsection{Stable theories}

Here we will prove that contrary to the situation with PC-exact saturation
(i.e., to Corollary \ref{cor:PC ES iff stable}), stable theories
always have \uline{local} PC-exact saturation. The proof is similar
to the proof that $\lambda$-stable theories have saturated models
of size $\lambda$ \cite[Theorem III.3.12]{shelah1990classification},
but a bit simpler.
\begin{defn}
\label{def:kappa loc}For any theory $T$, $\kappa_{\loc}\left(T\right)$
is the smallest cardinal $\kappa$ such that any local type $p\in S_{\Delta}\left(A\right)$
(see Definition \ref{def:local types}) does not fork over a set of
size $<\kappa$. If no such cardinal exists, then $\kappa_{\loc}\left(T\right)=\infty$. 
\end{defn}

In other words, the definition is the same as that of $\kappa\left(T\right)$,
where types are replaced with local types. 

For stable theories it is always $\aleph_{0}$:
\begin{prop}
\label{prop:stable -> kappa_loc finite} Suppose that $T$ is stable.
Then $\kappa_{\loc}\left(T\right)=\aleph_{0}$: every local type $p\in S_{\Delta}\left(A\right)$
does not fork over a finite subset $A_{0}\subseteq A$. 
\end{prop}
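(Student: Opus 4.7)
The plan is to use a standard local rank argument, specifically Shelah's local rank $R(\cdot, \Delta, 2)$; see \cite[Chapter II]{shelah1990classification}. Recall that this rank is defined inductively on consistent formulas $\varphi(x)$ in the Boolean algebra generated by $\Delta$: $R(\varphi, \Delta, 2) \geq \alpha + 1$ iff there is a $2$-inconsistent family of $\Delta$-instances $\{\psi_i(x, b_i)\}_{i < \omega}$ such that $R(\varphi \wedge \psi_i, \Delta, 2) \geq \alpha$ for all $i$. One extends this to partial $\Delta$-types by setting $R(\pi, \Delta, 2) = \inf\{R(\chi, \Delta, 2) : \chi \in \pi\}$. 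The two classical facts I take as given are: (i) $T$ stable implies $R(\varphi, \Delta, 2) < \omega$ for every finite $\Delta$ and every consistent $\Delta$-formula $\varphi$; and (ii) for $B \subseteq A$ and $p \in S_{\Delta}(A)$, the type $p$ does not fork over $B$ if and only if $R(p, \Delta, 2) = R(p|_B, \Delta, 2)$.

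Given these ingredients the argument is short. Fix $p \in S_{\Delta}(A)$ and set $n = R(p, \Delta, 2)$; by (i), $n < \omega$. Since $R(p, \Delta, 2)$ is the infimum, over $\chi \in p$, of the values $R(\chi, \Delta, 2)$, all of which are non-negative integers bounded below by $n$, this infimum is actually attained: there is a formula $\chi(x, c) \in p$, with $c$ a finite tuple from $A$, such that $R(\chi(x, c), \Delta, 2) = n$. Let $A_0$ be the (finite) set of coordinates of $c$.

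Then $\chi(x, c) \in p|_{A_0}$, so $R(p|_{A_0}, \Delta, 2) \leq R(\chi(x, c), \Delta, 2) = n$. Conversely, every formula in $p|_{A_0}$ is a formula in $p$, whence $R(p|_{A_0}, \Delta, 2) \geq R(p, \Delta, 2) = n$. Therefore $R(p|_{A_0}, \Delta, 2) = R(p, \Delta, 2)$, and by (ii) $p$ does not fork over $A_0$, as required. There is no substantive obstacle beyond appealing to the correct form of the classical rank calculus; the only conceptual observation is that because the local rank takes values in $\omega$, the infimum defining $R(p, \Delta, 2)$ is actually a minimum, and any witness to this minimum automatically involves only finitely many parameters from $A$.
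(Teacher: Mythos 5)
Your proof is correct, but it takes a genuinely different route from the paper's. The paper's argument is via definability of types: it takes a global non-forking extension $q$ of $p$, invokes the fact that in a stable theory $q$ is definable over $\acl^{\eq}(A)$, and observes that since $\Delta$ is finite, the finitely many $\Delta$-definitions of $q\restriction\Delta$ each use only a finite tuple from $\acl^{\eq}(A)$, so $q\restriction\Delta$ is definable over $\acl^{\eq}(A_{0})$ for some finite $A_{0}\subseteq A$ and hence does not fork over $A_{0}$. Your argument instead goes through the local rank calculus: finiteness of $R(\cdot,\Delta,2)$ in stable theories plus the characterization of non-forking by rank preservation, with the key observation that a finite-valued infimum over formulas in $p$ is attained by a single formula with finitely many parameters. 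Both routes appeal to a central package of facts about stable theories and both are short; the paper's version is arguably the more conceptual reading (definability makes the ``finite base'' jump out because a $\Delta$-definition is a single formula), whereas yours is more combinatorial and stays entirely within the local rank machinery without invoking $T^{\eq}$ or global types. One small caveat: make sure the version of fact (ii) you cite applies to $\Delta$-types (as opposed to complete types) with the intended reading of ``does not fork'' as extendibility to a global non-forking type; this is indeed in \cite[Chapter II--III]{shelah1990classification} but is worth stating precisely, since the rank characterization is usually proved for formulas/complete $\Delta$-types and the passage to the partial-type notion of forking needs the equivalence between rank preservation and existence of a definable (or finitely satisfiable) extension.
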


\begin{proof}
Let $q\in S\left(\C\right)$ be a global non-forking extension of
$p$ over $A$. Then $q$ is definable over $\acl^{\eq}\left(A\right)$.
In particular $q\restriction\Delta$ is definable over $\acl^{\eq}\left(A_{0}\right)$
for some finite subset $A_{0}\subseteq A$, so $q\restriction\Delta$
does not fork over $A_{0}$. 
\end{proof}
\begin{lem}
\label{lem:union of saturated} Suppose that $T$ is a stable $L$-theory.
Suppose that $\sequence{M_{i}}{i<\mu}$ is an increasing continuous
sequence of $\lambda$-locally saturated models. Then $M_{\mu}=\bigcup_{i<\mu}M_{i}$
is also $\lambda$-locally saturated. 
\end{lem}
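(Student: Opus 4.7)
The plan is to exploit the definability of $\Delta$-types in stable theories, reducing the realization of $p$ in $M_\mu$ to realizing a single first-order formula in some $M_i$. Given $p \in S_\Delta(A)$ with $A \subseteq M_\mu$ and $|A| < \lambda$, Proposition \ref{prop:stable -> kappa_loc finite} yields a finite $A_0 \subseteq A$ over which $p$ does not fork. Since $A_0$ is finite and $M_\mu = \bigcup_{i < \mu} M_i$ is an increasing continuous chain, $A_0 \subseteq M_i$ for some $i < \mu$; fix such an $i$.

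Let $\tilde p$ be the global nonforking $\Delta$-extension of $p$. By stability, $\tilde p$ is definable over $\acl^{\eq}(A_0)$: for each $\varphi \in \Delta$ there is a defining formula $d_\varphi(y)$ such that $\varphi(x, a) \in \tilde p$ iff $\models d_\varphi(a)$. Since $M_i$ is a model containing $A_0$, we have $\acl^{\eq}(A_0) \subseteq M_i^{\eq}$, and by coding imaginary parameters by real representatives in $M_i$ we may regard each $d_\varphi$ as an $L$-formula with finitely many parameters from $M_i$. Consider the $L$-formula
\[
\chi(x) \;:=\; \bigwedge_{\varphi \in \Delta} \forall y\,\bigl[\varphi(x, y) \leftrightarrow d_\varphi(y)\bigr],
\]
with finitely many parameters from $M_i$; it is consistent, since any realization of $\tilde p$ in $\C$ satisfies it. By $\lambda$-local saturation of $M_i$, applied with $\Delta' = \{\chi\}$ to the one-formula partial type $\{\chi(x)\}$ over the finite parameter set in $M_i$, there is $c \in M_i$ with $M_i \models \chi(c)$.

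Since $M_i \prec M_\mu \prec \C$ and $\chi$ is first-order, $\chi(c)$ holds in $\C$ as well. Hence for every $a \in \C$ and $\varphi \in \Delta$, $\models \varphi(c, a) \iff \models d_\varphi(a) \iff \varphi(x, a) \in \tilde p$. Specializing to $a \in A$, the element $c \in M_i \subseteq M_\mu$ realizes $\tilde p \restriction A = p$, as required. The main technical point is the passage from $\acl^{\eq}$-definability of $\tilde p$ to an $L$-formula $\chi$ over $M_i$; this is routine because $M_i$ is a model containing $A_0$, so algebraic imaginaries over $A_0$ lie in $M_i^{\eq}$ and admit real representatives in $M_i$.
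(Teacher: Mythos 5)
Your plan is to convert ``$c$ realizes the definable type $\tilde p$'' into the single first-order formula
\[
\chi(x)\;:=\;\bigwedge_{\varphi\in\Delta}\forall y\,\bigl[\varphi(x,y)\leftrightarrow d_\varphi(y)\bigr],
\]
but this formula is in general inconsistent, and the step ``it is consistent, since any realization of $\tilde p$ in $\C$ satisfies it'' does not hold. A global nonforking extension $\tilde p$ is a type \emph{over} $\C$, so its realizations live in a larger monster $\C'\succ\C$; a realization $c^{*}$ then satisfies $\varphi(c^{*},a)\leftrightarrow d_\varphi(a)$ for $a\in\C$, but not for all $a\in\C'$ — in particular not for $a$ algebraic over $c^{*}$ itself. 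Concretely, take $T$ the theory of an infinite pure set, $\Delta=\{x=y\}$, $A_0=\emptyset$, and $p$ the type of an element distinct from everything in $A$. Then $\tilde p$ is the (unique) global nonforking extension, $d_{=}(y)$ is $\bot$, and $\chi(x)$ becomes $\forall y\,(x\neq y)$, which is inconsistent. The same thing happens with an equivalence relation and $\Delta=\{E(x,y)\}$. Definability lets you express ``$\varphi(x,a)\in p$'' as $d_\varphi(a)$ \emph{for each} $a$, but bundling this into a universal sentence over all $y$ asks $c$ to satisfy the scheme against parameters that are not in the domain of $\tilde p$ (including $c$ and its algebraic neighbours), which is not what being a realization of a definable type means.

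Note also that if $\chi$ \emph{were} consistent with parameters in $M_i$, you would not need local saturation at all: $M_i\prec\C$ already realizes every consistent formula over itself, so the lemma would become trivial — a sign that something has gone wrong. The paper's proof takes a different route: it uses definability and finite satisfiability of $q$ over a countable $M'\prec M_j$ to build a long $\Delta_0$-Morley sequence inside $M_j$ (here the $\lambda$-local saturation of $M_j$ is genuinely used to get the sequence of length $\lambda$), and then uses a stability bound on alternations along $\Delta_0$-indiscernible sequences to argue that, if no term of the sequence realized $p$, one could concatenate and produce a long alternation, contradicting $\mathrm{nfcp}$-style compactness of the alternation bound. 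The realization of $p$ is thus found among the terms of a Morley sequence rather than as a solution of a single formula.
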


\begin{proof}
If $\lambda=\aleph_{0}$, this is clear so assume that $\lambda>\aleph_{0}$.
We are given $p\left(x\right)\in S_{\Delta}\left(A\right)$, $A\subseteq M_{\mu}$,
$\left|A\right|<\lambda$, and we want to realize $p$ in $M_{\mu}$
($x$ is any finite tuple of variables). Let $L'$ be a countable
sublanguage of $L$ containing $\Delta$. The models $M_{i}\restriction L'$
are still $\lambda$-locally saturated, so we may assume that $L=L'$
and in particular it is countable. By Proposition \ref{prop:stable -> kappa_loc finite},
$p$ does not fork over a finite set $B\subseteq A$. In particular,
$B\subseteq M_{j}$ for some $j<\mu$. Find a countable model $M'\prec M_{j}$
containing $B$. Let $q$ be a global extension of $p$ which does
not fork over $B$. By stability, $q$ is definable and finitely satisfiable
over $M'$.

By stability, if $\sequence{a_{i}}{i<\omega}$ is any indiscernible
sequence and $\varphi\left(x,y\right)$ is some formula then for any
$b$, either for almost all $i<\omega$ (i.e., all but finitely many)
$\varphi\left(a_{i},b\right)$ holds or for almost all $i<\omega$,
$\neg\varphi\left(a_{i},b\right)$ holds. By compactness, there is
some finite set of formulas $\Delta_{0}$ and $N<\omega$ such that
if $\sequence{a_{i}}{i<2N}$ is any $\Delta_{0}$-indiscernible sequence
then for any (partition of any) formula $\varphi\left(x,y\right)$
from $\Delta$ and any $b$, it cannot be that $\varphi\left(a_{i},b\right)$
for $i<N$ and $\neg\varphi\left(a_{i},b\right)$ for $N\leq i<2N$.

As $M_{j}$ is $\lambda$-locally saturated (and $\lambda>\aleph_{0}$),
we can realize in $M_{j}$ a $\Delta_{0}$-Morley sequence generated
by $q$ over $M'$: $a_{0}\models(q\restriction\Delta_{0})|_{M'}$,
$a_{1}\models(q\restriction\Delta_{0})|_{M'a_{0}}$, etc. It is not
hard to see that in fact the sequence $\sequence{a_{n}}{n<\omega}$
realizes $\left(q^{\left(\omega\right)}|_{M'}\right)\restriction\Delta_{0}$
(the latter is just the restriction of $q^{\left(\omega\right)}|_{M'}$
to the set of formulas from $\Delta_{0}$ in the variables $\sequence{x_{n}}{n<\omega}$
over $M'$ where $\left|x_{n}\right|=\left|x\right|$). For a proof,
see \cite[Claim 4.11]{MR3224981}. In particular, $\sequence{a_{n}}{n<\omega}$
is a $\Delta_{0}$-indiscernible sequence over $M'$. We can continue
and realize in $M_{j}$ a $\Delta_{0}$-Morley sequence $\sequence{a_{i}}{i<\lambda}$
generated by $q$ over $M'$. 

We claim that for some $i<\lambda$, $a_{i}\models p$. Indeed, suppose
not. This means that for every $i<\lambda$, for some $\varphi_{i}\left(x,y\right)$
from $\Delta$ and some $b_{i}\in A$, $\neg\varphi_{i}\left(a_{i},b_{i}\right)$
holds while $\varphi_{i}\left(x,b_{i}\right)\in p$. As $\Delta$
is finite and $\left|A\right|<\lambda$, there are some $\varphi$
and $b\in A$ such that $\neg\varphi\left(a_{i},b\right)$ holds for
all $i\in I_{0}$ where $I_{0}\subseteq\lambda$ is infinite and $\varphi\left(x,b\right)\in p$.
However, we can now realize $a_{0}'\models q\restriction\Delta|_{M'b\cup\set{a_{i}}{i\in I_{0}}}$,
$a_{1}'\models(q\restriction\Delta)|_{M'b\cup\set{a_{i}}{i\in I_{0}}\cup\left\{ a_{0}'\right\} }$,
etc (where $a_{i}'\in\C$). Since $q$ extends $p$, $\varphi\left(a_{i}',b\right)$
holds for all $i<\omega$, so we have a contradiction to the choice
of $\Delta_{0}$. 
\end{proof}

\begin{lem}
\label{lem:existence of locally saturated models}Suppose that $T$
is stable. Then for any $M\models T$ there is an extension $M'\succ M$
such that $M'$ is locally saturated and $\left|M'\right|\leq\left|M\right|+\left|T\right|$. 
\end{lem}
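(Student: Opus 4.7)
Set $\mu = |M| + |T|$.  The plan is to produce $M' \succeq M$ with $|M'| \leq \mu$ that is $\mu$-locally saturated (hence locally saturated, since $|M'| \leq \mu$), splitting on whether $\mu$ is regular or singular.

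The uniform cardinality input is that in a stable theory, $|S_\Delta(A)| \leq |A| + |T|$ for every set $A$ and every finite $\Delta \subseteq L$: each $\Delta$-type over $A$ is definable by a formula schema with parameters in $\acl^{\eq}(A)$, a set of size $\leq |A| + |T|$.  Summing over all finite $\Delta \subseteq L$, the total number of local types over $A$ is at most $|A| + |T|$.

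For $\mu$ regular, I build an increasing continuous elementary chain $\langle M_\alpha : \alpha \leq \mu \rangle$ with $M_0 = M$, arranging at each successor stage that $M_{\alpha+1}$ realizes every local type in $\bigcup_{\Delta \text{ finite}} S_\Delta(M_\alpha)$.  The cardinality bound above requires adding at most $|M_\alpha| + |T| \leq \mu$ new elements, so $|M_\alpha| \leq \mu$ throughout.  For $M' = M_\mu$, any $A \subseteq M'$ with $|A| < \mu$ lies inside some $M_\alpha$ by regularity of $\mu$, and each $p \in S_\Delta(A)$ extends to some $\tilde{p} \in S_\Delta(M_\alpha)$, which is realized in $M_{\alpha+1}$.

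The main obstacle is the singular case: a set $A \subseteq M'$ of size $< \mu$ with $|A| \geq \cof(\mu)$ may fail to sit in any proper initial segment of a length-$\mu$ chain, so the regular-case argument breaks down.  To handle this, let $\kappa = \cof(\mu)$ and fix an increasing sequence of regular cardinals $\langle \mu_i : i < \kappa \rangle$ cofinal in $\mu$.  Using the regular case as a sub-routine (with parameter $\mu_i$) at each stage, I build a continuous chain $\langle N_i : i < \kappa \rangle$ with $N_0 \supseteq M$, $|N_i| \leq \mu$, and $N_i$ being $\mu_i$-locally saturated; at each successor or limit one applies the regular construction to the union so far, yielding a $\mu_i$-locally saturated extension of size at most $\mu + \mu_i = \mu$.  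Set $M' = \bigcup_{i < \kappa} N_i$; then $|M'| \leq \mu$.  To see $M'$ is $\mu$-locally saturated, fix $j < \kappa$: for every $i \geq j$, $N_i$ is $\mu_i$-locally saturated and hence (as $\mu_i \geq \mu_j$) also $\mu_j$-locally saturated, so Lemma \ref{lem:union of saturated} applied to $\langle N_i : j \leq i < \kappa \rangle$ gives that $M' = \bigcup_{i \geq j} N_i$ is $\mu_j$-locally saturated.  Letting $j$ range over the cofinal sequence shows that $M'$ is $\mu$-locally saturated, completing the construction.
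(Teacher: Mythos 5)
Your proof is correct and follows essentially the same strategy as the paper: build an increasing continuous elementary chain realizing local types at successor stages, observe that the regular case is immediate, and invoke Lemma \ref{lem:union of saturated} to handle the singular case. The paper iterates to length $\kappa=|M|$ (after arranging $|T|\leq|M|$) and, when $\kappa$ is singular, extracts $\lambda$-locally saturated ``blocks'' $M_{\alpha+\lambda}$ for each regular $\lambda<\kappa$; you instead run a chain of length $\cof(\mu)$ whose $i$-th term is obtained by invoking the regular-case subroutine with parameter $\mu_i$. These are the same idea with slightly different bookkeeping, both ultimately channeled through Lemma \ref{lem:union of saturated}.

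One small slip in the singular-case write-up: you say the chain $\langle N_i : i<\kappa\rangle$ is \emph{continuous} and also that you apply the regular construction ``at each successor or limit.'' These are incompatible---applying the construction at a limit $i$ makes $N_i$ a proper extension of $\bigcup_{j<i}N_j$. The fix is trivial and you should apply the construction only at successor stages, taking unions at limits; then $N_i$ for limit $i$ need not be $\mu_i$-locally saturated, but it \emph{is} $\mu_j$-locally saturated for each $j<i$ (by induction using Lemma \ref{lem:union of saturated} on $\langle N_l : j\leq l<i\rangle$), which is all the final application needs. Alternatively drop continuity, noting that the union of any increasing chain of $\lambda$-locally saturated models is $\lambda$-locally saturated by interleaving with the filled-in continuous chain and applying the lemma transfinitely. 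Either repair closes the gap and the argument stands.
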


\begin{proof}
We may assume that $\left|T\right|\leq\left|M\right|$. Let $\kappa=\left|M\right|$.

For every finite set of formulas $\Delta$, the number of $\Delta$-types
over $M$ is bounded by $\kappa$ (by stability). Hence there is some
$M_{1}\succ M$ such that $M_{1}$ realizes every local type in $S_{\Delta}\left(M\right)$
and $\left|M_{1}\right|=\kappa$. This allows us to construct a continuous
increasing elementary chain $\sequence{M_{\alpha}}{\alpha<\kappa}$
starting with $M_{0}=M$ with the properties that $\left|M_{\alpha}\right|=\kappa$
and for each $\alpha<\kappa$, $M_{\alpha+1}$ realizes every local
type in $S_{\Delta}\left(M_{\alpha}\right)$. Let $M'=\bigcup_{\alpha<\kappa}M_{\alpha}$.

If $\kappa$ is regular then $M'$ is as requested. 

Otherwise, for every regular $\lambda<\kappa$, $M_{\alpha+\lambda}$
is $\lambda$-locally saturated. Since $M'=\bigcup\set{M_{\alpha+\lambda}}{\alpha<\kappa}$
then by Lemma \ref{lem:union of saturated}, $M'$ is $\lambda$-locally
saturated. Since this is true for every such $\lambda$, $M'$ is
$\kappa$-locally saturated. 
\end{proof}
\begin{thm}
\label{thm:stable local PC}Suppose that $T$ is stable. Then for
every cardinal $\kappa\geq\left|T\right|$, $T$ has local PC-exact
saturation at $\kappa$ . 
\end{thm}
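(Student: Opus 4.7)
The plan is to show that, given any $T_1 \supseteq T$ with $|T_1|=|T|$, I can construct a model $N \models T_1$ of cardinality $\kappa$ such that $N \upharpoonright L$ is $\kappa$-locally saturated but not $\kappa^+$-locally saturated. The failure of $\kappa^+$-local saturation will be automatic once $|N|=\kappa$: with $\Delta = \{x=y\}$, the $\Delta$-type over $N$ of any element in a proper $L$-elementary extension of $N$ (which exists because $T$ has infinite models) is a local type over a parameter set of size $\kappa < \kappa^+$, and cannot be realized in $N$ as it contains $x \neq m$ for every $m \in N$.

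To secure $\kappa$-local saturation of $N \upharpoonright L$, I will construct an $L_1$-elementary chain $(M_\alpha : \alpha < \kappa)$ of models of $T_1$, each of cardinality $\kappa$, starting from any $M_0 \models T_1$ of size $\kappa$ (produced by upward L\"owenheim--Skolem, using that $T$, and hence $T_1$, has infinite models and $|T_1| = |T| \leq \kappa$). At each successor stage $\alpha+1$, I will take $M_{\alpha+1} \succ_{L_1} M_\alpha$ of size $\kappa$ such that $M_{\alpha+1} \upharpoonright L$ realizes every type in $S_\Delta(M_\alpha \upharpoonright L)$ for every finite set of $L$-formulas $\Delta$. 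Since $M_\alpha \upharpoonright L \models T$ has size $\kappa \geq |T|$, stability yields $|S_\Delta(M_\alpha \upharpoonright L)| \leq \kappa$ for each such $\Delta$ (exactly the bound used in the proof of Lemma \ref{lem:existence of locally saturated models}), and there are $\leq |T| \leq \kappa$ finite such $\Delta$, so altogether only $\leq \kappa$ local $L$-types need be realized; this is achievable in an $L_1$-elementary extension of size $\kappa$ by compactness and downward L\"owenheim--Skolem in $L_1$. At limits $\alpha < \kappa$ I take the $L_1$-union, and set $N = \bigcup_{\alpha<\kappa} M_\alpha$, giving $|N|=\kappa$ and $N \models T_1$.

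The final task is to verify that $N \upharpoonright L$ is $\kappa$-locally saturated, splitting on the cofinality of $\kappa$. If $\kappa$ is regular, any $A \subseteq N$ with $|A|<\kappa$ lies in some $M_\alpha$, so every local $L$-type over $A$ extends to one in $S_\Delta(M_\alpha \upharpoonright L)$ and is realized in $M_{\alpha+1} \subseteq N$. If $\kappa$ is singular, I fix a regular $\lambda < \kappa$; then for each $\alpha<\kappa$ the regular-case argument shows $M_{\alpha+\lambda} \upharpoonright L$ is $\lambda$-locally saturated, and the subchain $(M_{\alpha+\lambda} \upharpoonright L)_{\alpha<\kappa}$ is cofinal in $N \upharpoonright L$. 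Lemma \ref{lem:union of saturated} then gives $\lambda$-local saturation of $N \upharpoonright L$, and letting $\lambda$ range over all regular cardinals below $\kappa$ yields $\kappa$-local saturation. I expect the singular case to be the only genuine subtlety, and Lemma \ref{lem:union of saturated} is precisely the tool that handles it; the local-type bound and the successor-stage extension are the same ingredients packaged in Lemma \ref{lem:existence of locally saturated models}, so no substantive new obstacle arises.
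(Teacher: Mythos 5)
Your proof is correct, and it uses the same two key ingredients as the paper: the stability bound $|S_\Delta(M)| \leq |M|$ for finite $\Delta$, and Lemma \ref{lem:union of saturated} to handle singular $\kappa$. The packaging differs, though. The paper assumes $T_1$ has Skolem functions and builds a length-$\omega$ interleaved chain $M_0, N_0=\Sk(M_0), M_1, N_1, \dots$, where each $M_i$ is a \emph{locally saturated} $T$-model supplied by Lemma \ref{lem:existence of locally saturated models} (itself a length-$\kappa$ chain in $L$) and the $N_i$ are $T_1$-models making the union land in $\PC{T_1}{T}$; local saturation of the union is then a direct application of Lemma \ref{lem:union of saturated}. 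You instead run a single length-$\kappa$ chain entirely inside $L_1$, realizing all local $L$-types over each stage in the next, and only extract local saturation of the union at the end, re-running the regular/singular dichotomy from the proof of Lemma \ref{lem:existence of locally saturated models} inline. This avoids the Skolemization step (the successor extension exists simply because every local $L$-type over an $L$-reduct of a $T_1$-model is finitely satisfiable in it, hence consistent with the full $L_1$ elementary diagram) and collapses the two chain constructions into one; what it costs you is having to reprove, rather than cite, the content of Lemma \ref{lem:existence of locally saturated models}. One small point worth making explicit in your write-up: the sequence $(M_{\alpha+\lambda})_{\alpha<\kappa}$ that you feed into Lemma \ref{lem:union of saturated} is increasing but not continuous; the lemma is stated for continuous chains, but its proof never uses continuity (it realizes the type inside a single $M_j$), so the application is sound --- and indeed the paper's own proof of Lemma \ref{lem:existence of locally saturated models} applies the lemma in the same way.
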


\begin{proof}
Suppose that $T_{1}\supseteq T$ has cardinality $\left|T\right|$.
Without loss of generality assume that $T_{1}$ has Skolem functions.
We construct a sequence of $T$-models $\sequence{M_{i}}{i<\omega}$
and $T_{1}$-models $\sequence{N_{i}}{i<\omega}$ such that:
\begin{itemize}
\item $N_{i}=\Sk\left(M_{i}\right)$; $N_{i}\restriction L\prec M_{i+1}$;
$M_{i}$ is locally saturated and $\left|M_{i}\right|=\kappa$. 
\end{itemize}
For the construction use Lemma \ref{lem:existence of locally saturated models}.
By Lemma \ref{lem:union of saturated}, $M=\bigcup_{i<\omega}M_{i}$
is locally saturated, and by construction it is in $\PC{T_{1}}T$.
It is not $\kappa^{+}$-locally saturated since $\left|M\right|=\kappa$
(so does not realize the local type $\set{x\neq a}{a\in M}$). 
\end{proof}

\subsection{Simple theories}

In this section we will analyze local PC-exact saturation in the context
of simple theories. We start with a positive result:
\begin{thm}
\label{thm:simple local pc}Assume that $T$ is a complete simple
$L$-theory, $T_{1}\supseteq T$ is a theory in $L_{1}\supseteq L$,
$\left|T_{1}\right|\leq\left|T\right|$. Also, assume that $\kappa$
is a singular cardinal such that $\kappa_{\loc}\left(T\right)\leq\mu=\cof\left(\kappa\right)$,
$\left|T\right|<\kappa$, $\kappa^{+}=2^{\kappa}$ and $\square_{\kappa}$
holds. Then $\PC{T_{1}}T$ has local exact saturation at $\kappa$. 
\end{thm}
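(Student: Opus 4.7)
The proof is an adaptation of the proof of Theorem \ref{thm:Main Thm simple theories} to local types. If $T$ is stable, the conclusion follows from Theorem \ref{thm:stable local PC}, so assume $T$ is unstable. As in the proof of Theorem \ref{thm:Main Thm simple theories}, fix a formula $\varphi(x,y) \in L$ witnessing the independence property and an $L_{1}$-indiscernible sequence $I = \sequence{a_{\alpha}}{\alpha<\kappa}$ witnessing IP for $\varphi$, and define the class $\cal C$ as in Definition \ref{def:the class C}.

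The core ingredient is the following local version of Main Lemma \ref{lem:Main lemma simple}: given $\bar{A} \in \cal C$, a finite $\Delta \subseteq L$, a set $C \subseteq A = \bigcup_{i<\mu} A_{i}$ with $\left|C\right| < \kappa$, and a local type $p \in S_{\Delta}(C)$, there exists $\bar{B} \in \cal C$ with $\bar{A} \leq \bar{B}$ such that $B = \bigcup_{i<\mu} B_{i}$ realizes $p$. To prove it, apply $\kappa_{\loc}(T) \leq \mu$ to choose $C_{0} \subseteq C$ with $\left|C_{0}\right| < \mu$ over which $p$ does not fork, pick a realization $d \models p$ with $d \ind_{C_{0}} \C$, and set $\hat{p} = \tp(d/C) \in S(C)$. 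Then $\hat{p}$ is a complete type extending $p$, and $d \ind_{C_{0}} C$ shows that $\hat{p}$ does not fork over $C_{0}$. Since $\left|C_{0}\right| < \mu = \cof(\kappa)$ and $C_{0} \subseteq \bigcup_{i<\mu}(C \cap A_{i})$, there is a successor $i_{*} < \mu$ with $C_{0} \subseteq C \cap A_{i_{*}}$; shifting indices exactly as in the reduction step of the proof of Main Lemma \ref{lem:Main lemma simple} (which was the sole use there of $\kappa(T) \leq \mu$), we arrange that $\hat{p}$ does not fork over $C \cap A_{0}$, and the remainder of that proof applies verbatim to $\hat{p}$, producing $\bar{B} \in \cal C$ extending $\bar{A}$ whose union realizes $\hat{p}$ and therefore $p$.

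With the local Main Lemma in hand, the remainder of the proof of Theorem \ref{thm:Main Thm simple theories} carries over with only bookkeeping changes: the enumeration $\bar{p}_{\alpha}$ now ranges over local types over subsets of $\bigcup_{i} A_{\alpha,i}$ of size $<\kappa$ (there are at most $\kappa^{+}$ such, using $\kappa^{+} = 2^{\kappa}$ and $\left|T\right| < \kappa$), and successor stages of the $\square_{\kappa}$-construction invoke the local Main Lemma instead of the original one; the limit stages are handled exactly as in the two cases described after Definition \ref{def:square}. The resulting model $M = \bigcup_{\alpha<\kappa^{+},i<\mu} A_{\alpha,i}$ lies in $\PC{T_{1}}T$, is $\kappa$-locally saturated by the enumeration, and fails to be $\kappa^{+}$-locally saturated because the alternating $\varphi$-type on $\sequence{a_{\alpha}}{\alpha<\kappa}$ from the final paragraph of the proof of Theorem \ref{thm:Main Thm simple theories} is omitted, for the same reason given there: any purported realization would lie in some $A_{\alpha,i}$ and contradict the indiscernibility clause of $\cal C$.

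The only essential new step is the local-to-complete type conversion at the start of the local Main Lemma; this is the place where $\kappa_{\loc}(T) \leq \mu$ enters in place of $\kappa(T) \leq \mu$, and once it is in place the independence-theorem machinery and the $\square_{\kappa}$-construction proceed precisely as in the complete-type case.
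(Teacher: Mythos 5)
Your proof is correct and follows essentially the same route as the paper's: reduce to the unstable case via Theorem \ref{thm:stable local PC}, extend the local type to a complete type over $C$ that does not fork over a set of size $<\mu$ (using $\kappa_{\loc}(T)\leq\mu$ in place of $\kappa(T)\leq\mu$), and then run the original Main Lemma and $\square_{\kappa}$-construction unchanged. The only cosmetic slip is the phrase ``pick $d\models p$ with $d\ind_{C_{0}}\C$,'' which should read $d\ind_{C_{0}}C$ (taking a non-forking extension over $C$); your next sentence already uses the corrected form, so the argument stands.
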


\begin{proof}
The proof is almost exactly the same as the proof of Theorem \ref{thm:Main Thm simple theories}
(where we also assumed instability). 

By Theorem \ref{thm:stable local PC}, we may assume that $T$ is
unstable, so there is an $L$-formula $\varphi\left(x,y\right)$ with
the independence property. We find an $L_{1}$-indiscernible sequence
$I$ witnessing this and define $\lambda_{i}$, $I_{i}$ for $i<\mu$
as in the proof of Theorem \ref{thm:Main Thm simple theories}. We
also define the class $\cal C$ in exactly the same way. 

The proof of the parallel to Main Lemma \ref{lem:Main lemma simple}
is similar, but the first step is to say that given a local type $p\left(x\right)\in S_{\Delta}\left(C\right)$,
without loss of generality it does not fork over $A_{*}=A_{0}\cap C$,
so we may extend it to a complete type $p'\left(x\right)\in S\left(C\right)$
which also does not fork over $A_{*}$. Then we continue with the
same proof.  

Note also that $x$ may not be a single variable but we never needed
that assumption in the proof of Main Lemma \ref{lem:Main lemma simple}.
\end{proof}
We will now discuss $\kappa_{\loc}\left(T\right)$ (see Definition
\ref{def:kappa loc}), which will lead us to our next result. 

\begin{claim}
For any complete theory $T$ with infinite models, $\kappa_{\loc}\left(T\right)$
(see Definition \ref{def:kappa loc}) can be either $\aleph_{0}$,
$\aleph_{1}$ or $\infty$. In the first two cases $T$ is simple,
and in the last case $T$ is not simple. 
\end{claim}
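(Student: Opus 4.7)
The plan is to prove the trichotomy in two halves: (i) if $T$ is simple, then $\kappa_{\loc}(T) \leq \aleph_1$, so the value lies in $\{\aleph_0, \aleph_1\}$; (ii) if $T$ is not simple, then $\kappa_{\loc}(T) = \infty$. Together these yield the listed three possible values and the characterization of simplicity as finiteness of $\kappa_{\loc}$.

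For (i), let $p \in S_\Delta(A)$ be a local type with $\Delta$ finite. By the Kim--Pillay characterization of simplicity, for every formula $\varphi(x,y)$ and every $k<\omega$ the local dividing rank $D(-,\varphi,k)$ takes only finite values on types, and moreover non-forking of a local type is equivalent to preservation of every such rank. Extending $p$ to a complete type $p^+\in S(A)$ and choosing a global non-forking extension $q$, the restriction $q\upharpoonright \Delta$ is pinned down by the countably many ranks $D(-,\varphi,k)$ for $\varphi\in\Delta$ and $k<\omega$. Each such rank is a natural number and is witnessed by only finitely many parameters from $A$; collecting these witnesses over the countably many $(\varphi,k)$-pairs gives a countable $A_0\subseteq A$ over which the ranks of $p$ and of $p\upharpoonright A_0$ agree. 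Hence $p$ does not fork over $A_0$, showing $\kappa_{\loc}(T)\leq\aleph_1$.

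For (ii), since $T$ is not simple, fix a formula $\varphi(x,y)$ with the tree property, witnessed by some $k$ and a tree $(a_\eta)_{\eta\in\omega^{<\omega}}$ with paths consistent and sibling sets $k$-inconsistent. For an arbitrary cardinal $\mu$, use compactness (with an Erd\H{o}s--Rado style extraction to secure indiscernibility) to produce a tree $(a_\eta)_{\eta\in\mu^{<\omega}}$ with the same properties in which each sibling sequence $(a_{\eta\frown\langle i\rangle})_{i<\mu}$ is indiscernible over a rich subset of remaining tree parameters. Let $A$ be the set of all tree parameters. For a generic branch $\sigma\in\mu^\omega$, the local type $p_\sigma=\{\varphi(x,a_{\sigma\upharpoonright n}):n<\omega\}\in S_{\{\varphi\}}(A)$ is consistent by the path condition; and given any $A_0\subseteq A$ with $|A_0|<\mu$, choosing $\sigma$ generically to avoid $A_0$, the sibling sequence through some $a_{\sigma\upharpoonright n}$ is $A_0$-indiscernible and $k$-inconsistent for $\varphi$, so $\varphi(x,a_{\sigma\upharpoonright n})$ divides over $A_0$. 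Hence $p_\sigma$ forks over every $A_0\subseteq A$ of size $<\mu$, and since $\mu$ is arbitrary, $\kappa_{\loc}(T)=\infty$.

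The main obstacle is the non-simple direction: producing, for every cardinal $\mu$, a single local type forking over \emph{every} subset of size $<\mu$ of its parameter set. The delicate step is ensuring that sibling sequences of the tree of width $\mu$ are sufficiently indiscernible to witness dividing uniformly across all small $A_0$, which forces us to take $\mu$ large enough to apply an Erd\H{o}s--Rado extraction. The simple direction, by contrast, is essentially routine once one invokes finiteness of local ranks as the defining property of simplicity.
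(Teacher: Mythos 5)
Your decomposition into ``simple $\Rightarrow \kappa_{\loc} \leq \aleph_1$'' and ``not simple $\Rightarrow \kappa_{\loc} = \infty$'' is a reasonable plan, and the rank-based argument for the simple direction is a valid alternative to the paper's (the paper instead argues the contrapositive: if some local $\Delta$-type forked over every countable set, then the countable reduct $T\restriction L'$ with $\Delta \subseteq L'$ would fail local character of non-forking, so $T\restriction L'$ and hence $T$ would not be simple). However, your non-simple direction has a genuine gap, and you also omit the case of finite values.

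The problem in (ii) is the tree height. You build a tree of width $\mu$ but height only $\omega$, so the branch type $p_\sigma = \{\varphi(x, a_{\sigma\upharpoonright n}) : n < \omega\}$ has parameters drawn from the \emph{countable} set $A_\sigma = \{a_{\sigma\upharpoonright n} : n<\omega\}$. Taking $A_0 = A_\sigma$, which has size $\aleph_0 < \mu$, the type $p_\sigma$ is a type over (a subset of) $A_0$ and therefore does not fork over $A_0$; so $p_\sigma$ cannot witness $\kappa_{\loc}(T) > \mu$. Relatedly, your quantifiers are inverted: you write ``given any $A_0$, choosing $\sigma$ generically to avoid $A_0$,'' but the type must be fixed first and then fork over \emph{every} small $A_0$. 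The paper fixes this by taking a tree of height $\mu$ (and width $(2^\mu)^+$ rather than $\mu$, to guarantee, at each level, infinitely many siblings realizing the same type over the already-chosen nodes) and constructs by transfinite recursion an increasing continuous sequence of nodes $s_\alpha \in \lambda^\alpha$ for $\alpha < \mu$ such that $\varphi(x, a_{s_{\alpha+1}})$ $k$-divides over $\{a_{s_\beta} : \beta \leq \alpha\}$; the resulting branch parameters form a set of size exactly $\mu$, and the branch type forks over every proper initial segment, hence over every subset of size $<\mu$. Finally, the claim asserts that $\kappa_{\loc}(T)$ lies in $\{\aleph_0, \aleph_1, \infty\}$, so one must also rule out finite values; your argument only gives $\kappa_{\loc} \leq \aleph_1$ in the simple case, which is compatible with $\kappa_{\loc}$ finite. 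The paper disposes of this with a short observation: for $a_0, \ldots, a_{n-1}$ with $a_i \notin \acl(a_{\neq i})$, the equality-type $\tp_=(a_0,\ldots,a_{n-1}/a_0,\ldots,a_{n-1})$ divides over every proper subset of $\{a_i : i<n\}$, so $\kappa_{\loc}(T) > n$ for every $n < \omega$.
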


\begin{proof}
The proof is standard, but we give details. 

If $T$ is not simple, then $T$ has the tree property (see \cite[Definition 7.2.1]{TentZiegler})
as witnessed by some formula $\varphi\left(x,y\right)$ and some $k<\omega$:
there is a sequence $\sequence{a_{s}}{s\in\omega^{<\omega}}$ such
that for every $s\in\omega^{<\omega}$, $\set{\varphi\left(x,a_{s\concat\left\langle i\right\rangle }\right)}{i<\omega}$
is $k$-inconsistent while for any $\eta\in\omega^{\omega}$, $\Gamma_{\eta}=\set{\varphi\left(x,a_{\eta\restriction n}\right)}{n<\omega}$
is consistent. Let $\mu$ be any regular cardinal. By compactness,
we may extend the tree to have width $\lambda=\left(2^{\mu}\right)^{+}$
and height $\mu$ (so that $s$ ranges over $\lambda^{<\mu}$). We correspondingly extend the definition of $\Gamma_{\eta}$ to all $\eta \in \lambda^{\mu}$.  For
$\alpha<\mu$, find an increasing continuous sequence $s_{\alpha}\in\lambda^{\alpha}$ such that $s_{\alpha+1}$ extends $s_{\alpha}$
and $\varphi\left(x,a_{s_{\alpha+1}}\right)$ divides (and even $k$-divides)
over $\set{\varphi\left(x,a_{s_{\beta}}\right)}{\beta\leq\alpha}$
(the construction uses the fact that for infinitely many $i<\lambda$,
$a_{s_{\alpha}\concat\left\langle i\right\rangle }$ will have the
same type over $a_{s_{<\alpha}}$). Letting $\eta=\bigcup_{\alpha<\mu}s_{\alpha}$,
any complete $\varphi$-type extending $\Gamma_{\eta}$ over $\set{a_{s_{\alpha}}}{\alpha<\mu}$
divides over any subset of size $<\mu$ its domain. Since $\mu$ was
arbitrary, this show that $\kappa_{\loc}\left(T\right)=\infty$. 

Now, if $\kappa_{\loc}\left(T\right)>\aleph_{1}$, then there is a
local type $p\in S_{\Delta}\left(A\right)$ which forks over any countable
subset of $A$. Let $L'$ be a countable subset of the language $L$
of $T$ containing all the symbols appearing in $\Delta$. Then any
completion $q\in S_{L'}\left(A\right)$ of $p$ forks over any countable
subset of $A$, so $T\restriction L'$ does not satisfy local character
for non-forking, so it is not simple, and so is $T$, thus $\kappa_{\loc}\left(T\right)=\infty$. 

Finally, $\kappa_{\loc}\left(T\right)$ cannot be any $n<\omega$,
since given $a_{0},\ldots,a_{n-1}$ with $a_{i}\notin\acl\left(a_{\neq i}\right)$
(e.g., $a_{i}$ come from an infinite indiscernible sequence), $\tp_{=}\left(a_{0},\ldots,a_{n-1}/a_{0},\ldots,a_{n-1}\right)$
divides over any proper subset of $\set{a_{i}}{i<n}$.
\end{proof}
\begin{defn}
\cite[Definition 8]{MR1905165} A theory is called \emph{supershort}
if $\kappa_{\loc}\left(T\right)=\aleph_{0}$. 
\end{defn}

\begin{rem}
\label{rem:supershort discussion}This is not the precise definition
given in \cite{MR1905165} which is given in terms of dividing chains,
but it is equivalent to it: given an infinite dividing chain of conjunctions
of a single formula $\varphi\left(x,y\right)$ as in the definition
there, the partial $\varphi$-type containing them divides over any
finite subset of its domain. On the other hand, if $\kappa_{\loc}\left(T\right)>\aleph_{0}$
and $p\in S_{\Delta}\left(A\right)$ witnesses this (i.e., divides
over every finite $A_{0}\subseteq A$) for some finite $\Delta$,
then by coding finitely many formulas as one formula (see \cite[Proof of Theorem II.2.12(1)]{shelah1990classification}),
we can recover a dividing chain as in the definition in \cite{MR1905165}. 
\end{rem}

Recall that a theory $T$ is \emph{low} if whenever $\varphi\left(x,y\right)$
is a formula then there is some $n<\omega$ such that if $\sequence{a_{i}}{i<\omega}$
is an indiscernible sequence such that $\set{\varphi\left(x,a_{i}\right)}{i<\omega}$
is inconsistent, then it is already $n$-inconsistent. This is not
the original definition from \cite{MR1777789,MR1812164}, which is
given using local ranks, but it is equivalent to it when $T$ is simple,
see \cite[Proposition 18.19]{MR2814891}. 

The following proposition gives a simple criterion for supershortness. 
\begin{prop}
Suppose that $T$ is a simple theory such that if $\Delta$ is a finite
set of formulas and $p\left(x\right)\in S_{\Delta}\left(A\right)$
then there is a finite set of formulas $\Delta'$ such that whenever
$p$ divides over $A_{0}\subseteq A$, there is some formula $\varphi\left(x,y\right)$
from $\Delta'$ and some $a\in\C$ such that $p\vdash\varphi\left(x,a\right)$
and $\varphi\left(x,a\right)$ divides over $A_{0}$. Then if $T$
is low then $\kappa_{\loc}\left(T\right)=\aleph_{0}$.
\end{prop}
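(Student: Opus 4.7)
The plan is to argue by contradiction: assume $T$ is low and satisfies the hypothesis but $\kappa_{\loc}(T) > \aleph_0$. Fix a local type $p \in S_{\Delta}(A)$ that divides over every finite $A_0 \subseteq A$, and apply the hypothesis to $p$ to obtain a finite set $\Delta'$. A pigeonhole argument then produces a single formula $\varphi \in \Delta'$ which uniformly witnesses this dividing: for every finite $A_0 \subseteq A$ there is $a \in \C$ with $p \vdash \varphi(x,a)$ and $\varphi(x,a)$ dividing over $A_0$. Indeed, otherwise each $\varphi \in \Delta'$ comes with a ``bad'' finite $A_0^{\varphi} \subseteq A$, and the hypothesis applied to the finite union $A_* = \bigcup_{\varphi \in \Delta'} A_0^{\varphi}$ produces a $\varphi$ and $a$ with $\varphi(x,a)$ dividing over $A_*$; by anti-monotonicity of dividing in the base, this forces dividing over $A_0^{\varphi}$, contradicting its badness.

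Invoking lowness of $\varphi$ gives a uniform constant $k = k_{\varphi}$ such that the witness from the previous paragraph is in fact $k$-dividing. The goal is now to contradict Shelah's rank theorem for simple theories: the local rank $D(\cdot, \{\varphi\}, k)$ is always a natural number on partial types, so $N := D(\emptyset, \{\varphi\}, k) < \omega$, yet a $\varphi$-dividing chain of length $N+1$---a sequence $a_0, \ldots, a_N \in \C$ with $\{\varphi(x,a_i) : i \leq N\}$ consistent and each $\varphi(x,a_i)$ $k$-dividing over $\{a_j : j < i\}$---would force $D(\emptyset, \{\varphi\}, k) > N$, which is the contradiction.

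The chain is built inductively. Given $a_0, \ldots, a_{n-1}$, I realize $p$ by some $c$ independent from $\{a_0, \ldots, a_{n-1}\}$ over $A$, form the local type $p_n = \tp_{\Delta \cup \{\varphi\}}(c / A \cup \{a_0, \ldots, a_{n-1}\})$ (which extends $p$ and, by the independence and the induction hypothesis, still divides over the finite set $\{a_0, \ldots, a_{n-1}\}$ of its parameters), re-apply the hypothesis to $p_n$, and re-run the pigeonhole-plus-lowness argument to extract $a_n$. The principal obstacle is precisely this iteration: each re-application of the hypothesis produces a possibly different finite witnessing family $\Delta'_n$, while the rank contradiction needs a \emph{single} uniform $\varphi$ and $k$ throughout the construction. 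I would handle this either through a compactness/saturated-extension argument packaging the whole sequence of witnesses at once into a single partial type over which the hypothesis selects a uniform $\Delta'$, or through a K\"onig-style tree argument on the families $(\Delta'_n)_n$, extracting a branch along which the witnessing formula stabilizes to a single $\varphi$. Once this uniformity is achieved, the chain of length $N+1$ can be assembled and the desired contradiction with the finiteness of $D(\emptyset, \{\varphi\}, k)$ is obtained.
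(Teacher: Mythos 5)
Your overall strategy---isolate a single formula $\varphi$, use lowness to fix a uniform $k$, and contradict finiteness of the local rank $D(\emptyset,\varphi,k)$---is the same contradiction the paper ultimately reaches (it says ``recover the tree property,'' with the $D$-rank cited as an alternative). Your pigeonhole to get a single uniform $\varphi$ (by taking the union $A_{*}$ of the ``bad'' finite sets and using that dividing over a larger base implies dividing over a smaller one) is a clean variant of the paper's pigeonhole, which instead builds a chain first and then thins to a constant $\varphi_i$. So far so good.

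The problem is in the chain construction, and you have correctly identified part of it yourself: re-applying the hypothesis to a new local type $p_n$ over $A\cup\{a_0,\dots,a_{n-1}\}$ hands you a fresh $\Delta'_n$, and nothing forces the witnessing formula to be the $\varphi$ you fixed. Neither of your proposed repairs (compactness packaging, a K\"onig-style tree) is carried out, and it is not clear either would work as sketched, because the hypothesis is a local-type-by-local-type statement with no uniformity built in across the $p_n$. There is a second, unacknowledged gap: you assert that $p_n=\operatorname{tp}_{\Delta\cup\{\varphi\}}(c/A\,a_{<n})$ ``still divides over $\{a_0,\dots,a_{n-1}\}$'' because of the independence $c\ind_A a_{<n}$ and the induction hypothesis. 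This does not follow. The assumption on $p$ gives dividing over finite \emph{subsets of} $A$; the set $a_{<n}$ is in general not contained in $A$, and dividing over $\emptyset$ does not transfer to dividing over an arbitrary finite set. So the very applicability of the hypothesis at stage $n$ is in doubt.

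The paper sidesteps both issues by never leaving the original $p$ and $\Delta'$. It builds an increasing chain of \emph{finite subsets} $A_i\subseteq A$ so that $p|_{A_{i+1}}$ divides over $A_i$, uses the hypothesis (once, with the single $\Delta'$) to get $\varphi_i\in\Delta'$ and $a_i$ with $\varphi_i(x,a_i)$ dividing over $A_i$ and $p|_{A_{i+1}}\vdash\varphi_i(x,a_i)$, and then pigeonholes to a constant $\varphi$. The needed dividing over $a_{<i}$ (rather than over $A_i$) is then extracted \emph{afterwards} from the $A_i$-indiscernible witnesses by a Ramsey-plus-automorphism argument (shifting $a_{<i}$ so the witness sequence becomes indiscernible over it), followed by a compactness step to assemble a single coherent sequence. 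This is where the real work is, and it is the step your proposal is missing: converting dividing over finite subsets of $A$ into a $k$-dividing chain over the previously chosen parameters, without ever re-invoking the hypothesis.
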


\begin{proof}
If $\kappa_{\loc}\left(T\right)>\aleph_{0}$ then there is a local
type $p\left(x\right)\in S_{\Delta}\left(A\right)$ (for $\Delta$
finite) such that $p$ divides over any finite subset of $A$, in
particular $A$ is infinite. Let $\Delta'$ be as above. Thus we can
construct an increasing chain $\sequence{A_{i}}{i<\omega}$ such that
$A_{i}\subseteq A$ are finite and $p|_{A_{i+1}}$ divides over $A_{i}$.
As $\Delta'$ is finite we can find a single formula $\varphi\left(x,y\right)\in\Delta'$
and $a_{i}\in\C$ such that $\varphi\left(x,a_{i}\right)$ divides
over $A_{i}$ and $p|_{A_{i+1}}\vdash\varphi\left(x,a_{i}\right)$.
If $J_{i}$ is an indiscernible sequence over $A_{i}$ witnessing
that $\varphi\left(x,a_{i}\right)$ divides over $A_{i}$, by Ramsey
and compactness and applying an automorphism we can assume that $J_{i}$
is indiscernible over $a_{<i}$ (perhaps changing $a_{<i}$). By compactness
we can assume that $\set{\varphi\left(x,a_{i}\right)}{i<\omega}$
is consistent and $\varphi\left(x,a_{i}\right)$ divides over $a_{<i}$
(we need compactness since we changed $a_{<i}$ in every stage). As
$T$ is low, there is some $k$ such that $\varphi\left(x,a_{i}\right)$
$k$-divides over $a_{<i}$. From this we can recover the tree property.
Alternatively, this also follows from \cite[Proposition 18.19 (5)]{MR2814891}. 
\end{proof}
However, in general there is no connection between being low and being
supershort. We found the following table useful. 
\begin{center}
\begin{tabular}{|c|c|c|}
\hline 
 & Supershort & Not supershort\tabularnewline
\hline 
\hline 
Low & Any stable theory & \cite[Section 4]{MR1905165}\tabularnewline
\hline 
Not low & \cite{MR1776222} (even supersimple) & \cite[Section 5]{MR1696842}\tabularnewline
\hline 
\end{tabular}
\par\end{center}

Our goal is to show that when $\cof\left(\kappa\right)=\aleph_{0}$,
$T$ has local PC-exact saturation at $\kappa$ if and only if $T$
is supershort. 
\begin{prop}
\label{prop:2-inconsistency tree}If $\kappa_{\loc}\left(T\right)>\aleph_{0}$
then there is a formula $\varphi\left(x,y\right)$ and a sequence
of formulas $\sequence{\psi_{n}\left(x,y_{n}\right)}{n<\omega}$ (where
$x$ is a finite tuple of variables and the $y_{n}$'s are tuples
of variables of varying lengths) and a sequence $\sequence{a_{\eta}}{\eta\in\omega^{<\omega}}$
such that:
\begin{itemize}
\item Each formula $\psi_{n}$ has the form $\bigwedge_{j<l}\varphi\left(x,y_{j}\right)$
for some $l$; $a_{\eta}$ is an $\left|y_{\left|\eta\right|}\right|$-tuple;
For $\sigma\in\omega^{\omega}$, $\set{\psi_{n}\left(x,a_{\sigma\restriction n}\right)}{n<\omega}$
is consistent; For every $\eta\in\omega^{n},\nu\in\omega^{m}$ such
that $\eta\perp\nu$, $\left\{ \psi_{n}\left(x,a_{\eta}\right),\psi_{m}\left(x,a_{\nu}\right)\right\} $
is 2-inconsistent. 
\end{itemize}
\end{prop}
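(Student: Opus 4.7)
My plan is to strengthen Fact~\ref{fact:not supersimple -> 2 incon} under the stronger hypothesis $\kappa_{\loc}(T)>\aleph_{0}$. The two new requirements---that each $\psi_n$ be a conjunction of instances of a single formula $\varphi$, and that incomparable pairs be $2$-inconsistent---will both be obtained via a conjunction-block trick applied on top of the standard dividing-chain-to-tree construction.

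First, by Remark~\ref{rem:supershort discussion}, $\kappa_{\loc}(T)>\aleph_{0}$ gives a finite $\Delta$ and a local type $p(x)\in S_{\Delta}(A)$ that divides over every finite subset of $A$; by the coding alluded to there (replacing finitely many formulas by a single Boolean combination), I obtain a single formula $\varphi(x,y)$ and a sequence $\sequence{b_{n}}{n<\omega}$ with $\{\varphi(x,b_{n}):n<\omega\}$ consistent and $\varphi(x,b_{n+1})$ dividing over $\{b_{i}:i\leq n\}$. For each $n\geq 1$ I fix an indiscernible witness $\sequence{b_{n}^{i}}{i<\omega}$ over $b_{<n}$ with $b_{n}^{0}=b_{n}$ such that $\{\varphi(x,b_{n}^{i}):i<\omega\}$ is $k_{n}$-inconsistent for some $k_{n}\geq 2$, and set $l_{n}:=k_{n}-1$.

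Next, by iterated Ramsey and compactness in the style of the proof of Fact~\ref{fact:not supersimple -> 2 incon}, I build a tree $\sequence{c_{\nu}}{\nu\in\omega^{<\omega}}$ of $y$-tuples in which the siblings $\sequence{c_{\nu\frown\langle i\rangle}}{i<\omega}$ at each $\nu$ of length $n$ form an indiscernible sequence over $\set{c_{\mu}}{\mu\unlhd\nu}$ whose $\varphi$-instances are $k_{n+1}$-inconsistent, and which furthermore satisfies the following \emph{thick-branch consistency}: for every $\sigma\in\omega^{\omega}$ and every choice of size-$l_{j}$ subset $S_{j}\subseteq\omega$ at each level $j\geq 1$, the set $\{\varphi(x,c_{(\sigma\restriction(j-1))\frown\langle i\rangle}):j\geq 1,\,i\in S_{j}\}$ is consistent. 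Thick-branch consistency is arranged by, at each inductive stage, extracting from the dividing witness an indiscernible sequence of $l_{n+1}$-tuples rather than of single elements---possible since $l_{n+1}<k_{n+1}$ ensures that any $l_{n+1}$ witnesses are jointly consistent with the type built so far---and then gluing across levels by compactness.

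Finally I reindex by blocks. For $\eta\in\omega^{n}$ let $\hat{\eta}_{j-1}:=\langle\eta(0)\cdot l_{1},\eta(1)\cdot l_{2},\ldots,\eta(j-2)\cdot l_{j-1}\rangle$ be the canonical $c$-tree ancestor at level $j-1$, let $B_{j}(\eta)$ be the $l_{j}$-tuple $\sequence{c_{\hat{\eta}_{j-1}\frown\langle\eta(j-1)\cdot l_{j}+k\rangle}}{k<l_{j}}$, set $a_{\eta}$ to be the concatenation $B_{1}(\eta)\frown\cdots\frown B_{n}(\eta)$, and set $\psi_{n}(x,y_{n}):=\bigwedge_{j=1}^{n}\bigwedge_{k=0}^{l_{j}-1}\varphi(x,y_{n}^{(j,k)})$ with the subvariables matched to the tuple $a_{\eta}$. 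Consistency of $\{\psi_{n}(x,a_{\sigma\restriction n}):n<\omega\}$ along $\sigma$ is immediate from thick-branch consistency. For incomparable $\eta,\nu$ with longest common initial segment $\rho$, the blocks $B_{|\rho|+1}(\eta)$ and $B_{|\rho|+1}(\nu)$ are disjoint $l_{|\rho|+1}$-element subsets of the single $k_{|\rho|+1}$-inconsistent sibling indiscernible below $\hat{\eta}_{|\rho|}=\hat{\nu}_{|\rho|}$, so their union has $2l_{|\rho|+1}\geq k_{|\rho|+1}$ elements and is inconsistent; hence $\psi_{|\eta|}(x,a_{\eta})\wedge\psi_{|\nu|}(x,a_{\nu})$ is inconsistent, giving the required $2$-inconsistency. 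The main technical obstacle is arranging thick-branch consistency together with the level-wise $k_{n}$-inconsistency, since the naive dividing-chain-to-tree recipe provides only single-element branch consistency; this is overcome by the $l_{n+1}$-tupled Ramsey step just sketched, which crucially uses the gap $l_{n+1}<k_{n+1}$.
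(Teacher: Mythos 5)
Your overall strategy is the same as the paper's: obtain a dividing chain of $\varphi$-instances from Remark~\ref{rem:supershort discussion}, build a tree with level-wise $k_n$-inconsistency and branch consistency in the style of Fact~\ref{fact:not supersimple -> 2 incon}, and then convert $k_n$-inconsistency to $2$-inconsistency by a block-conjunction reindexing, which is exactly what the paper delegates to the proof of \cite[Proposition 3.5]{ArtemNick}. Your reindexing computation and the $2l_j \geq k_j$ argument are correct. However, there are two issues.

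First, a minor one: Remark~\ref{rem:supershort discussion} produces a dividing chain in which each $\psi_n$ is already a conjunction $\bigwedge_{i<k_n}\varphi(x,y_i)$ that divides over the earlier parameters, not a chain of single $\varphi$-instances as you state. This is harmless for the block trick (conjunctions of conjunctions are still conjunctions of $\varphi$-instances), but you should start from the correct object.

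Second, and more seriously, your justification of \emph{thick-branch consistency} has a gap. You claim it is "possible since $l_{n+1}<k_{n+1}$ ensures that any $l_{n+1}$ witnesses are jointly consistent with the type built so far." What $l_{n+1}<k_{n+1}$ (together with indiscernibility of the dividing witness and minimality of $k_{n+1}$) gives you is that any $l_{n+1}$ members of the witness sequence are jointly consistent \emph{with each other}; it does not give joint consistency with the partial type along the branch. Indeed there is a real tension here: if you try to force the witness sequence to be indiscernible over a realization $d$ of the branch type (which would immediately yield the consistency you want), then since $d$ realizes $\psi_n(x,a_n)$ and $a_n$ is one member of the witness, indiscernibility over $d$ would make $d$ realize every member, contradicting $k_{n+1}$-inconsistency. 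So the witness can only be taken indiscernible over the parameter base, and establishing that an $l_{n+1}$-block can still be glued into the branch requires a more careful inductive construction (mutual indiscernibility of the rows of an array, obtained by Erd\H{o}s--Rado or iterated Ramsey, plus a separate argument for why a block is consistent with the branch). This is precisely the technical content of \cite[Proposition 3.5]{ArtemNick} that the paper's proof invokes by reference, and your one-line parenthetical does not replace it. The rest of your argument (the reindexing and the $2$-inconsistency computation) is fine once thick-branch consistency is in hand.
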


\begin{proof}
This is a local version of Fact \ref{fact:not supersimple -> 2 incon}.
Since $\kappa_{\loc}\left(T\right)>\aleph_{0}$, there is a finite
set $\Delta$ and a type $p\left(x\right)\in S_{\Delta}\left(A\right)$
(for some infinite set $A$) that forks over every finite subset $A_{0}\subseteq A$.
By Remark \ref{rem:supershort discussion}, there is a formula $\varphi\left(x,y\right)$
and a sequence of formulas $\sequence{\psi_{n}\left(x,y_{n}\right)}{n<\omega}$
where each $\psi_{n}$ is a conjunction of the form $\bigwedge_{i<k_{n}}\varphi\left(x,y_{i}\right)$,
and a sequence $\sequence{a_{n}}{n<\omega}$ such that $\set{\psi_{n}\left(x,a_{n}\right)}{n<\omega}$
is consistent and $\psi_{n}\left(x,a_{n}\right)$ divides over $a_{<n}$.
Thus there are $\sequence{k_{n}<\omega}{n<\omega}$ and a tree $\sequence{a_{\eta}}{\eta\in\omega^{<\omega}}$
such that for every $\sigma\in\omega^{\omega}$, $\set{\psi_{n}\left(x,a_{\sigma\restriction n}\right)}{n<\omega}$
is consistent and such that for every $n<\omega$ and $\eta\in\omega^{n}$,
$\set{\psi_{n+1}\left(x,a_{\eta\concat\left\langle i\right\rangle }\right)}{i<\omega}$
is $k_{n}$-inconsistent. By applying the same proof as in \cite[Proposition 3.5]{ArtemNick}
to this tree, we are done. 
\end{proof}
\begin{thm}
Let $T$ be any complete theory. Suppose that $\kappa$ is singular
with cofinality $\omega$ such that $\left|T\right|<\kappa$, $\kappa^{+}=2^{\kappa}$
and $\square_{\kappa}$ holds. Then, $T$ is supershort iff $T$ has
local PC-exact saturation at $\kappa$. 
\end{thm}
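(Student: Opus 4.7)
The plan is to prove both directions separately. The forward direction is almost immediate: if $T$ is supershort then $\kappa_{\loc}(T)=\aleph_0$, which by the trichotomy preceding Definition~\ref{rem:supershort discussion} forces $T$ to be simple, and since $\cof(\kappa)=\aleph_0=\mu$ we have $\kappa_{\loc}(T)\leq\mu$. Combined with the standing assumptions $|T|<\kappa$, $\kappa^+=2^\kappa$, and $\square_\kappa$, the hypotheses of Theorem~\ref{thm:simple local pc} are met, and local PC-exact saturation for $T$ at $\kappa$ follows directly.

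For the backward direction, assume $T$ is not supershort, so $\kappa_{\loc}(T)>\aleph_0$. The plan is to mimic the construction of Theorem~\ref{thm:not supersimple -> singular comp in cof omega}, substituting the local strengthening Proposition~\ref{prop:2-inconsistency tree} for Fact~\ref{fact:not supersimple -> 2 incon}. This provides an $L$-formula $\varphi(x,y)$ (with $x$ a finite tuple), formulas $\psi_n(x,y_n)=\bigwedge_{j<l_n}\varphi(x,y_j)$, and a tree $\sequence{a_\eta}{\eta\in\omega^{<\omega}}$ with consistency along branches and $2$-inconsistency across incomparable nodes. Since stable theories are supershort by Proposition~\ref{prop:stable -> kappa_loc finite}, $T$ must be unstable, so we pick an $L$-formula $\varphi^*(x,y)$ witnessing the order property together with a sequence $\sequence{b_n}{n<\omega}$ exactly as in Theorem~\ref{thm:not supersimple -> singular comp in cof omega}. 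Taking a ground model $M_0\models T$ of size $|T|$ containing the tree parameters and the $b_n$'s, I define $T_1$ as in Section~\ref{subsec:Description-of-the expansion} modified as in Theorem~\ref{thm:not supersimple -> singular comp in cof omega}, including the coding function $H$ coming from the tree and the function $k\colon\omega\to M_0$ with $k(i)=b_i$.

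The core verification is that if $M\models T_1$ and $M\upharpoonright L$ is locally $\kappa$-saturated, then every local type $p\in S_\Delta(A)$ with $|A|\leq\kappa$ is realized in $M$. Using $\cof(\kappa)=\omega$, write $A=\bigcup_{i<\omega}A_i$ with $|A_i|<\kappa$, and by local $\kappa$-saturation pick $c_i\models p\upharpoonright A_i$. The analog of Lemma~\ref{lem:function coding} (which still goes through since Proposition~\ref{prop:2-inconsistency tree} preserves the branch-consistency clause needed for the construction) produces some $m\in\mathcal{K}^M$ with $H(m,i)=e^{-1}(c_i)$ for all $i<\omega$. For each $\psi(x,a)\in p$ there is a standard $k_\psi$ with $M\models\psi(c_i,a)$ for all $i\geq k_\psi$, so the $A$-definable convex set $D_\psi=\set{i\in\mathcal{N}^M}{\forall j\in[k_\psi,i]\;M\models\psi(e(H(m,j)),a)}$ contains $[k_\psi,\omega)$, and by overspill contains a nonstandard $d_\psi$. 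Let $C=\set{d_\psi}{\psi\in p}$; since $|C|\leq\kappa$ with $\kappa$ singular and the coinitiality of any linear order is a regular cardinal, there is a coinitial subset $C'\subseteq C$ of size $<\kappa$. Then the $\set{\varphi^*}$-type $\Pi=\set{\varphi^*(k(i),y)}{i<\omega}\cup\set{\neg\varphi^*(k(d),y)}{d\in C'}$ is a local type over a set of size $<\kappa$, hence realized by some $f\in M$ via local $\kappa$-saturation; the least $g\in\mathcal{N}^M$ with $\neg\varphi^*(k(g),f)$ is then nonstandard and bounded above by every element of $C$ (through $C'$), whence $g\in D_\psi$ for every $\psi\in p$ and $e(H(m,g))\models p$.

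The main obstacle will be the clean adaptation of the coding lemma to the setting of Proposition~\ref{prop:2-inconsistency tree}: we must verify that Lemma~\ref{lem:function coding} continues to hold when $x$ is a finite tuple (handled via arithmetical pairing on $\mathcal{N}$ to code tuples as single elements of $\mathcal{K}$) and when the $\psi_n$'s are all conjunctions of a single $\varphi$ (which is automatic from the branch-consistency clause). Once this adaptation is in place, the argument above is a routine local-type modification of Theorem~\ref{thm:not supersimple -> singular comp in cof omega}, with local $\kappa$-saturation playing the role that full $\kappa$-saturation played there.
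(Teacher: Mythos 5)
Your proposal is correct and takes essentially the same path as the paper: the forward direction reduces to Theorem~\ref{thm:simple local pc} via the observation that $\kappa_{\loc}(T)=\aleph_0$ forces simplicity, and the backward direction is a local-type adaptation of Theorem~\ref{thm:not supersimple -> singular comp in cof omega} using Proposition~\ref{prop:2-inconsistency tree} in place of Fact~\ref{fact:not supersimple -> 2 incon}. One small clarification: the reason local $\aleph_1$-saturation suffices in the adapted coding lemma is precisely that the type $\set{\psi_{i+1}(x,\dots)}{i<\omega}$ built along the branch is a $\{\varphi\}$-type (each $\psi_n$ being a conjunction of instances of $\varphi$), hence local; this conjunction form is a separate output of Proposition~\ref{prop:2-inconsistency tree}, not a consequence of the branch-consistency clause as your parenthetical suggests.
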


\begin{proof}
Right to left follows from Theorem \ref{thm:simple local pc}. For
the other direction, use the same proof as in Theorem \ref{thm:not supersimple -> singular comp in cof omega},
noting that the proof goes through, because the only use of actual
$\kappa$-saturation as opposed to local $\kappa$-saturation was
the use of ({*}) (i.e., the use of the suitable version of Lemma \ref{lem:function coding}).
Here, all the formulas $\psi_{n}$ are conjunctions of instances of
$\varphi$ so the types 
\[
\set{\psi_{i+1}\left(x,e\left(a\left(\eta_{i}\right)\left(0\right)\right),\ldots,e\left(a\left(\eta_{i}\right)\left(\left|y_{i+1}\right|-1\right)\right)\right)}{i<\omega}
\]
 (using the notation from the proof of Lemma \ref{lem:function coding})
are still consistent. Of course, since the $x$ may now be a tuple
of length $>1$, the function $H$ has domain $\cal K^{\left|x\right|}\times\cal N$
(where $x$ is from $\psi_{n}\left(x,y_{n}\right)$). 

One more difference is that now the type $p\left(z\right)$ we wish
to realize is in possibly more than one variable. However this is
easy to overcome by taking a tuple of ``codes'' for the function
$n\mapsto c_{n}$. 
\end{proof}

\section{\label{sec:Final-thoughts}Final thoughts and questions}

\subsection{NSOP$_{1}$}

We would like to extend Theorem \ref{sec:PC-exact-saturation-for simple }
to NSOP$_{1}$-theories, but we do not even know the situation with
elementary classes (i.e., not PC-exact saturation, just exact saturation).
The approach of mimicking the proof or the proof of \cite[Theorem 3.3]{20-Kaplan2015}
using Kim-independence and all its properties (see \cite{kaplan2019transitivity,kaplan2017kim,24-Kaplan2017}) does not seem to work without new ideas.
Both proofs use base monotonicity and hence are not applicable. 
\begin{question}
\label{que:NSOP1 extension}Is Theorem \ref{sec:PC-exact-saturation-for simple }
true for NSOP$_{1}$-theories?
\end{question}

Note that \cite[Theorem 9.30]{MR3666452} states that if $T$ has
SOP$_{2}$ then it has \emph{PC-singular compactness} (the negation
of PC-exact saturation): for some $T_{1}$ containing $T$ of cardinality
$\leq\left|T\right|$ and every singular $\kappa>\left|T\right|,$
if $M\in PC\left(T,T_{1}\right)$ is $\kappa$-saturated then it is
$\kappa^{+}$-saturated. Thus, a positive answer to Question \ref{que:NSOP1 extension}
will help to ``close the gap''. 

\subsection{NIP}

In \cite[Theorem 4.10]{20-Kaplan2015} it is proved that if $T$ is
NIP, and $\left|T\right|<\kappa$ is singular such that $2^{\kappa}=\kappa^{+}$,
then $T$ has exact saturation at $\kappa$ iff $T$ is not distal.
While the situation for PC-exact saturation seems less clear, one
can ask about local exact saturation (without PC). The proof of the
direction that if $T$ is distal then $T$ does not have exact saturation
at $\kappa$ goes through in the local case: if $T$ is distal, $\left|T\right|<\kappa$
is singular, then every $\kappa$-locally saturated model is $\kappa^{+}$-locally
saturated. This is Proposition \cite[Proposition 4.12]{20-Kaplan2015}.
The proof has to be adjusted. Following the notation there, we elaborate
a bit. Given a finite set $\Delta$ and a type $p\left(x\right)\in S_{\Delta}\left(A\right)$,
let $p'$ be an extension of $p$ to $S_{\Delta'}\left(A\right)$
where $\Delta'=\Delta\cup\set{\theta^{\varphi}}{\varphi\in\Delta}$
(we also consider all possible partitions of formulas in $\Delta$).
We let $b_{i}\models p'|_{A_{i}}$ for $i<\mu$ and find $d_{i}^{\varphi}$
as there for any $\varphi\in\Delta$. Letting $q_{i}=\set{\theta^{\varphi}\left(x,d_{i}^{\varphi}\right)}{\varphi\in\Delta}$
(so a finite set), the proof of \cite[Proposition 4.13]{20-Kaplan2015}
goes through because $p'$ is a complete $\Delta'$-type. We then
find $e_{i}$ realizing the $\Delta''$-type of the finite tuple $d_{i}=\sequence{d_{i}^{\varphi}}{\varphi\in\Delta}$
over $A_{i}\cup\set{b_{i}}{i<\mu}$ where $\Delta''$ contains $\Delta'$
and the formulas $(\forall x)(\theta^{\varphi}\left(x,z\right)\to\varphi\left(x,y\right))$
and $(\forall x)(\theta^{\varphi}\left(x,z\right)\to\neg\varphi\left(x,y\right))$
for $\varphi\in\Delta$. The rest goes through. 

However, the other direction, namely that if $T$ is not distal then
$T$ has local exact saturation at $\kappa$ for $\kappa$ as above
seems less clear. The main issue is that the model constructed omits
a type of an element over an indiscernible sequence, and this type
is not local. For example if $I$ is an indiscernible set, then the
type omitted is that of a new element in the sequence. 
\begin{question}
Which NIP theories have local exact saturation at singular cardinals
as above?
\end{question}

\bibliographystyle{alpha}
\bibliography{ms}

\end{document}